\newtheorem{thm}{Theorem}
\newtheorem{lem}{Lemma}[section]
\newtheorem{prop}[lem]{Proposition}
\newtheorem{cor}[lem]{Corollary}
\newtheorem*{question}{Question}
\theoremstyle{definition}
\newcommand{\Z}{\mathbb{Z}}
\def \mod#1{{\:({\rm mod}\ #1)}}
\newcommand{\cms}{\textnormal{cms}}
\newcommand{\ms}{\textnormal{ms}}
\renewcommand{\geq}{\geqslant}
\renewcommand{\leq}{\leqslant}
\renewcommand{\ge}{\geqslant}
\let\oldproofname=\proofname
\renewcommand{\proofname}{\rm\bf{\oldproofname}}
\title{The cyclic matching sequenceability of regular graphs}
\author{Daniel Horsley
\thanks{
School of Mathematics, Monash University, Clayton 3800, Australia.}\\
\texttt{danhorsley@gmail.com}\\
\and
Adam Mammoliti $^{*,}$
\thanks{Corresponding Author.}\\
\texttt{adam.mammoliti@outlook.com.au}
}
\date{}
\begin{document}
\setstretch{1.1}
\maketitle
\begin{abstract}
The {\em cyclic matching sequenceability} of a simple graph $G$, denoted $\cms(G)$, is the largest integer $s$ for which there exists a cyclic ordering of the edges of $G$ so that every set of $s$ consecutive edges forms a matching. In this paper we consider the minimum cyclic matching sequenceability of $k$-regular graphs. We completely determine this for $2$-regular graphs, and give bounds for $k \geq 3$.
\end{abstract}
\bigskip\noindent \textbf{Keywords:}
matching; edge ordering; matching sequenceability; chromatic index

\section{Introduction}

The {\em cyclic matching sequenceability} of a simple graph $G$,
denoted $\cms(G)$,
is the largest integer $s$ for which
there exists a cyclic ordering of the edges of $G$ so that every set of $s$ consecutive edges forms a matching.
Katona~\cite{MR2181045} implicitly considered cyclic matching sequenceability
and found a lower bound for $\cms(K_n)$.
Brualdi, Kiernan, Meyer and Schroeder~\cite{MR2961987}
defined cyclic matching sequenceability explicitly
and proved that $\cms(K_n) = \left\lfloor\frac{n-2}{2}\right\rfloor$
for all $n\geq 4$,
thus strengthening the result found by Katona~\cite{MR2181045}.
Brualdi et al.~\cite{MR2961987} also determined
that $\cms(C_n) = \left\lfloor \frac{n-1}{2} \right\rfloor$
for all $n\geq 3$ and found the cyclic matching sequenceability of several other graphs.

A non-cyclic variant to cyclic matching sequenceability, denoted $\ms(G)$ has also been considered and was defined first by Alspach~\cite{MR2394738} who determined $\ms(K_n)$.
Brualdi et al.~\cite{MR2961987} also determined the matching sequenceability for cycles and several other classes of graphs.
Chiba and Nakano~\cite{ChiNa}
found various results concerning the matching sequenceability for general graphs and more refined results for regular graphs.

In this paper our focus is on the cyclic matching sequenceability of regular graphs. The \emph{chromatic index} of a graph is the smallest number of colours required to properly colour its edges. By Vizing's theorem \cite{Viz} the chromatic index of a graph with maximum degree $\Delta$ is equal to either $\Delta$ or $\Delta+1$. In the former case we say the graph is \emph{class 1} and in the latter we say it is \emph{class 2}. For positive integers $n$ and $k$ such that $n>k$ and $nk$ is even, we define $\cms(n,k)$ to be the minimum value of $\cms(G)$ over all $k$-regular graphs on $n$ vertices and define $\cms_1(n,k)$ to be the minimum value of $\cms(G)$ over all $k$-regular class 1 graphs on $n$ vertices. Our primary focus is on the behaviour of $\cms(n,k)$ and $\cms_1(n,k)$ for fixed $k$ as $n$ becomes large. All asymptotic notation used in this paper is relative to this regime.

The main contribution of this paper is to establish lower bounds on $\cms(n,k)$ and $\cms_1(n,k)$, as well as an upper bound on $\cms(n,k)$.
These bounds are summarised in Theorem~\ref{thm:k reg}.

\begin{thm}\label{thm:k reg}
Let $k \geq 3$ be an integer. Then, for any integer $n \geq 6(k+1)$ such that $nk$ is even,
\begin{alignat*}{2}
  \max\left\{\mfrac{k(5k-3)}{4(k+1)(4k-3)}n-6,\mfrac{31k}{98(k+1)}n-o(n)\right\}  &\leq \cms(n,k) &&\leq \mfrac{kn}{2(k+1)} \\
  \max\left\{\mfrac{5k-8}{4(4k-7)}n-6,\mfrac{31}{98}n-o(n)\right\}  &\leq \cms_1(n,k) &&\leq \mfrac{n-1}{2}\,.
\end{alignat*}
\end{thm}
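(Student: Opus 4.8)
The plan is to treat the lower and upper bounds separately, the lower bounds being where most of the work lies. For the lower bounds, start from the given $k$-regular graph $G$ together with a proper edge colouring: by Vizing's theorem $G$ decomposes into $c$ matchings $M_1,\dots,M_c$ with $c=k$ if $G$ is class $1$ and $c=k+1$ otherwise, each $M_i$ having at most $\lfloor n/2\rfloor$ edges and $\sum_i|M_i|=kn/2$. First re-balance this decomposition so the sizes $|M_i|$ are as nearly equal as possible (shifting edges between almost-equal colour classes along alternating paths and cycles, or invoking an equitable edge colouring result). Then build a cyclic ordering of $E(G)$ by placing the $M_i$ in consecutive blocks around a cycle and choosing the internal order of each block to mesh with its neighbours: since $M_i\cup M_{i+1}$ is a disjoint union of paths and even cycles, one can order the edges of $M_i$ ``from one end'' and those of $M_{i+1}$ ``from the other end'' of each such component so that any conflicting pair $e\in M_i$, $f\in M_{i+1}$ is pushed far apart in the concatenation. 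With balanced blocks of size about $\frac{kn}{2c}$, a window of $s$ consecutive edges meets only a bounded number of blocks, and the meshing guarantees it is a matching as long as $s$ is at most a suitable fraction of the block size; optimising this bookkeeping produces the bounds with the clean $-6$ term. Applying the argument with $c=k$ and with $c=k+1$ yields the class-$1$ and the general lower bound respectively, which is why the two pairs of constants are related by the substitution $k\mapsto k+1$.

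To obtain the asymptotically stronger $\frac{31}{98}$-type bounds one refines the construction: instead of using each colour class as a single block, split each $M_i$ into several pieces and distribute them around the cycle, so that each vertex's edges are spread more uniformly and conflicts are forced only for longer windows. This costs a lower-order amount of slack (hence the $-o(n)$) but improves the leading constant from $\frac{5}{16}$ to $\frac{31}{98}$, which overtakes the first bound once $k$ is large.

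For the upper bound $\cms(n,k)\le\frac{kn}{2(k+1)}$ we exhibit an extremal $k$-regular graph, namely the disjoint union of $t=n/(k+1)$ copies of $K_{k+1}$ when $(k+1)\mid n$ (and a suitable near-disjoint variant, with one or two larger $k$-regular components, in general). A window of $s$ consecutive edges is a matching and so contains at most $\nu(K_{k+1})=\lfloor(k+1)/2\rfloor$ edges of each of the $t$ copies, whence $s\le t\lfloor(k+1)/2\rfloor$; this already equals $\frac{kn}{2(k+1)}$ for $k$ even. For $k$ odd it only gives $s\le n/2$, so we push harder. A window can contain a full perfect matching of a copy $H\cong K_{k+1}$, but a near-perfect matching of $K_{k+1}$ extends to a perfect matching in exactly one way, so two windows adjacent in the cyclic shift cannot both induce a perfect matching of $H$ unless they induce the same one. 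Charging the ``saturated'' windows of each copy against the gaps between that copy's edges around the cycle then sharpens the estimate to $\frac{kn}{2(k+1)}$.

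The remaining bound $\cms_1(n,k)\le\frac{n-1}{2}$ is essentially free. If $n$ is odd it follows from $\cms(G)\le\nu(G)\le\lfloor n/2\rfloor$. If $n$ is even, observe that for any graph with $k\ge2$ a perfect matching cannot occur as every block of $n/2$ consecutive edges of a cyclic ordering, since two cyclically adjacent windows would then both be perfect matchings differing in a single edge, forcing that edge to equal the edge it replaced; hence $\cms(G)\le n/2-1=\lfloor(n-1)/2\rfloor$ for every such $G$, and a class $1$ $k$-regular graph on $n$ vertices exists (for instance a $k$-regular bipartite graph, which is class $1$ by K\"onig's theorem). I expect the delicate part to be the lower-bound interleaving: simultaneously balancing the colour classes, choosing internal orderings of the (pieces of the) matchings that are pairwise compatible all the way around the cycle, and checking that every window — in particular those straddling several blocks — is a matching, while squeezing out the best possible leading constant, is where the real combinatorial optimisation lies; the odd-$k$ case of the first upper bound is the only other place requiring genuine care.
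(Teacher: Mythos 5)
Your lower-bound argument has a genuine gap: the ``blocks in cyclic order, meshed from both ends'' construction is exactly the paper's proof of the general bound $\cms(H)\geq\lfloor\frac{1}{2c}|E(H)|\rfloor-1$ (Lemma~\ref{lem:gen lower bound}), and it cannot give the constants in Theorem~\ref{thm:k reg}. The obstruction is that each colour class $M_i$ must mesh simultaneously with $M_{i-1}$ on its left and $M_{i+1}$ on its right, so only about half of each block is available for each neighbour; this caps the window length at roughly half a block, i.e.\ about $\frac{n}{4}$ in the class~1 case, whereas the claimed bounds start at $\frac{7}{20}n$ for $k=3$ and tend to $\frac{5}{16}n$ and $\frac{31}{98}n$. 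Beating the half-block barrier is the whole content of Sections~\ref{givenPartition} and~\ref{findPartition}: each colour class is split into three consecutive pieces $Z_i,Y_i,X_i$, where the tail $X_i$ of block $i$ is chosen so that \emph{no} edge of $X_i$ is adjacent to any edge of the head $Z_{i+1}$ of the next block (property (P4)), and so that many vertices are covered by edges of both $X_i$ and $X_{i+1}$ (property (P6)). Constructing such sets with $|X_i|$ a positive fraction of $n$ requires the $(x,w)$-semipartition machinery --- an explicit iterative covering argument for small $k$ and a random vertex subset plus Hoeffding for large $k$ (which is where $\frac{31}{98}$ comes from). Your alternative of splitting each $M_i$ into pieces scattered around the cycle does not address the problem, since conflicts only ever occur between \emph{different} colour classes; no amount of ``optimising the bookkeeping'' of the plain two-ends meshing reaches the stated constants.

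The upper bound on $\cms(n,k)$ also has a gap for odd $k$. With $H$ a disjoint union of copies of $K_{k+1}$, the counting argument of Lemma~\ref{lem:gen bound given subG}(ii) applied to a copy $G\cong K_{k+1}$ gives $\lfloor\frac{1}{\nu}(|E(G)|-\cms(G))\rfloor+1=\lfloor\frac{2}{k+1}\cdot\frac{k^2+1}{2}\rfloor+1=k$, hence only $\cms(H)\leq\frac{n}{2}$, not $\frac{kn}{2(k+1)}$; this is precisely why the paper replaces $K_{k+1}$ by the denser graph $B_k$ on $k+2$ vertices, for which $|E(B_k)|-\cms(B_k)\geq\frac{k(k+1)}{2}$ and the count rises to $k+1$. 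Your proposed rescue --- charging saturated windows using the fact that a near-perfect matching of $K_{k+1}$ extends uniquely --- correctly shows that consecutive saturated windows induce the same perfect matching of a given copy, but the remaining bookkeeping (summing run lengths against gaps, and handling $n\not\equiv 0\pmod{k+1}$) is not carried out and is not obviously salvageable. The $\cms_1(n,k)\leq\frac{n-1}{2}$ argument (two matchings of size $\frac{n}{2}$ cannot differ in exactly one edge) does match the paper and is fine.
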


In the lower bound on $\cms(n,k)$, for large $n$, the $\max$ takes the first value for $k \leq 13$ and the second value for $k \geq 14$. In the lower bound on $\cms_1(n,k)$, for large $n$, the $\max$ takes the first value for $k \leq 14$ and the second value for $k \geq 15$. Table~\ref{T:kconsequences} in the conclusion gives an explicit listing of the consequences of Theorem~\ref{thm:k reg} for various values of $k$. In the case of 2-regular graphs we are able to completely determine $\cms(n,2)$ and $\cms_1(n,2)$.

\begin{thm}\label{thm:2 reg}
For each $n \geq 6$, we have $\cms(n,2)=\lfloor\frac{n}{3}\rfloor$ and for each even $n \geq 4$, we have $\cms_1(n,2)=\frac{n-2}{2}$.
\end{thm}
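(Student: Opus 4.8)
The plan is to handle the two upper bounds and the two lower bounds separately; since a $2$-regular graph is a vertex-disjoint union of cycles of length at least $3$, and is class $1$ exactly when all of those cycles are even, the lower bounds must be established for \emph{every} graph in the relevant class. I will repeatedly use the elementary reformulation that a cyclic ordering of $E(G)$ has every $s$ consecutive edges forming a matching if and only if any two edges sharing a vertex lie at cyclic distance at least $s$ in the ordering, as well as the known value $\cms(C_m)=\lfloor(m-1)/2\rfloor$. For the upper bound $\cms(n,2)\le\lfloor n/3\rfloor$ it suffices, for each $n\ge 6$, to exhibit one $2$-regular graph $G$ on $n$ vertices that contains a triangle: take $\lfloor n/3\rfloor$ disjoint triangles when $3\mid n$, and replace one of these triangles by a $C_4$ or a $C_5$ when $n\equiv1$ or $2\mod{3}$. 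The three edges of the triangle pairwise share a vertex, so in any cyclic ordering of $E(G)$ their positions are pairwise at cyclic distance at least $\cms(G)$; they therefore split the $n$ positions into three arcs, each of length at least $\cms(G)$, so $3\,\cms(G)\le n$. For the upper bound $\cms_1(n,2)\le(n-2)/2$, take $G=C_n$ with $n$ even, for which $\cms(G)=\lfloor(n-1)/2\rfloor=(n-2)/2$.

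For the lower bound $\cms_1(n,2)\ge(n-2)/2$, let $G$ be a disjoint union of even cycles on $n$ vertices and write $E(G)=M_1\sqcup M_2$, where $M_1$ and $M_2$ are the two perfect matchings obtained by properly $2$-colouring the edges of each cycle, so $|M_1|=|M_2|=n/2$. I would order $E(G)$ cyclically as all of $M_1$ followed by all of $M_2$, in orders still to be chosen within each block. Any window of $n/2-1$ consecutive edges that lies within a single block is a subset of a perfect matching, hence a matching; a window straddling the two blocks is a suffix of $M_1$ together with a prefix of $M_2$, or the reverse across the wrap-around, and a short calculation shows that all such windows are matchings precisely when the following holds: whenever the $i$-th edge of $M_1$ shares a vertex with the $j$-th edge of $M_2$ (in block order), $|i-j|\le1$. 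The conflict graph between $M_1$ and $M_2$, restricted to one even cycle $C_{2\ell}$ of $G$, is itself a $2\ell$-cycle around which $M_1$- and $M_2$-edges alternate; labelling its vertices in cyclic order by $\ell,\ell-1,\dots,2,1,1,2,\dots,\ell-1,\ell$ makes consecutive labels differ by at most $1$ and, as one checks, restricts to a bijection onto $\{1,\dots,\ell\}$ on each side. Assigning the $i$-th cycle of $G$ a block of $\ell_i$ consecutive ranks disjoint from those of the other cycles then produces orderings of $M_1$ and $M_2$ with the required property, whence $\cms(G)\ge n/2-1$.

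For the lower bound $\cms(n,2)\ge\lfloor n/3\rfloor$, I must show $\cms(G)\ge\lfloor n/3\rfloor$ for \emph{every} disjoint union of cycles $G$ on $n$ vertices; equivalently, I must produce a cyclic ordering of $E(G)$ in which any two consecutive edges of each cycle $C_{m_i}$ end up at cyclic distance at least $\lfloor n/3\rfloor$. The crucial numerical fact is that $\lfloor(m-1)/2\rfloor/m\ge1/3$ for every $m\ge3$, \emph{with the single exception $m=4$}, where the ratio is $1/4$. Hence, when $G$ has no $C_4$-component, I would take an optimal cyclic ordering of each component $C_{m_i}$ and interleave these orderings ``proportionally'' — placing the $m_i$ edges of $C_{m_i}$ with local density about $m_i/n$, while preserving each component's cyclic order — so that every window of $\lfloor n/3\rfloor$ consecutive edges meets $C_{m_i}$ in at most $\lfloor(m_i-1)/2\rfloor$ cyclically consecutive, and therefore matching, edges; a counting argument then yields the bound. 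When $G$ does contain $C_4$-components, I would first pair them up: $C_4\sqcup C_4$ is class $1$, so by the previous paragraph $\cms(C_4\sqcup C_4)=3$, a ratio of $3/8>1/3$; a single leftover $C_4$ is paired with another even cycle (again via the class-$1$ result) or, in the case that all remaining components are triangles, is handled by an explicit small modification. Each resulting super-component has $\cms$ at least one third of its number of edges, so the proportional interleaving applies to the super-components and gives $\cms(G)\ge\lfloor n/3\rfloor$ (specializing, when $G=\frac{n}{3}\,C_3$, to the ordering $a_1\cdots a_t\,b_1\cdots b_t\,c_1\cdots c_t$, where $a_i,b_i,c_i$ are the three edges of the $i$-th triangle, for which every $t=n/3$ consecutive edges clearly use at most one edge from each triangle). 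Together with the two upper bounds, this gives $\cms(n,2)=\lfloor n/3\rfloor$ for $n\ge6$ and $\cms_1(n,2)=(n-2)/2$ for even $n\ge4$.

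The main obstacle is the general lower bound. Merely interleaving optimal orderings of the components is genuinely insufficient, because $\cms(C_4)=1$ is too small relative to the four edges of $C_4$, so $C_4$-components have to be absorbed by hand into larger blocks; one must check that suitable pairings always exist and, more delicately, that once all the grouping is done the floor-function bookkeeping still delivers exactly $\lfloor n/3\rfloor$ rather than $\lfloor n/3\rfloor-O(1)$. The class-$1$ lower bound sidesteps all of this because an even cycle always splits into two perfect matchings.
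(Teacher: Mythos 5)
Your upper bounds and your class-1 lower bound are sound and essentially match the paper's: a triangle-containing $2$-regular graph together with the three-arc pigeonhole gives $\cms(n,2)\le\lfloor n/3\rfloor$, the cycle $C_n$ gives $\cms_1(n,2)\le\frac{n-2}{2}$, and your two-block ordering $M_1\vee M_2$ with the $\ell,\ell-1,\dots,1,1,\dots,\ell-1,\ell$ labelling around each even cycle is a correct, direct version of the paper's Lemmas~\ref{lem:good ord 2-colourable} and~\ref{lem:2 edge colourable}. The genuine gaps are both in the lower bound $\cms(n,2)\ge\lfloor n/3\rfloor$ for arbitrary $2$-regular graphs, which is the hard part of the theorem. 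First, the ``proportional interleaving'' is where all the work lies and you have not done it: you must place the $m_i$ edges of every component simultaneously on $\Z_n$ so that, for each component, every $\lfloor(m_i-1)/2\rfloor$ consecutive inter-edge gaps sum to at least $\lfloor n/3\rfloor$. An averaging argument shows this holds on average, but with \emph{zero} slack when $m_i\in\{3,6\}$ (where $\lfloor(m_i-1)/2\rfloor/m_i$ equals $1/3$ exactly), and upgrading ``on average'' to ``always, simultaneously for all components'' is precisely the floor-function bookkeeping you defer. The paper does this bookkeeping by a different device: an induction on the number of components (Lemma~\ref{lem:noSingle4Cycle}) producing an ordering $\ell_0\vee\ell_1\vee\ell_2$ into three matchings of near-equal sizes with explicit inter-block distance guarantees, the delicate cases being $m-m'\in\{5,7\}$ in its inequality \eqref{eqn:2-reg class 2 epsilons}.

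Second, your case analysis for absorbing $C_4$-components has a hole. A single leftover $C_4$ whose companion components are odd cycles, not all of length $3$, with no even cycle available --- e.g.\ $G=C_4\sqcup C_5$ or $G=C_4\sqcup C_3\sqcup C_7$ --- falls through both of your subcases: there is no even cycle to pair it with, and the ``all remaining components are triangles'' fallback does not apply. You would need something like $\cms(C_4\sqcup C_m)\ge\lceil(m+4)/3\rceil$ for odd $m\ge5$, a nontrivial sub-problem you do not address. Moreover, even in the all-triangles subcase the ``explicit small modification'' is asserted rather than given, and this single-$C_4$ configuration is exactly the one the paper is forced to treat by hand, via the final displayed construction in its proof of Theorem~\ref{thm:2 reg} which threads $e_0$, then $e_2$ (or $e_2,e^*$), then $e_1,e_3$ into the three-block ordering of the rest of the graph. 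So your overall strategy is reasonable and different in flavour from the paper's --- interleaving per-component optimal cyclic orderings versus an inductive three-matching decomposition --- but as written the proof of the main lower bound is incomplete at its two hardest points.
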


For a graph $G$, let $\mathcal{M}(G)$ be the set of all matchings in $G$ and, for an edge $e$ of $G$, let $\mathcal{M}_e(G)$ be the set of all matchings in $G$ containing $e$. A \emph{fractional edge colouring} of a graph $G$ is a function $\omega: \mathcal{M}(G) \rightarrow \mathbb{R}^{\geq 0}$ such that $\sum_{M \in \mathcal{M}_e(G)}\omega(M) \geq 1$ for each edge $e \in E(G)$. The weight of such a colouring is $\sum_{M \in \mathcal{M}(G)}\omega(M)$. Note that an edge colouring of $G$ can be viewed as a fractional edge colouring $\omega$ of $G$ for which the image of $\omega$ is a subset of $\{0,1\}$. The \emph{fractional chromatic index} of a graph $G$ is the infimum of the weights of the fractional edge colourings of $G$. While the main focus of this paper is regular graphs, some of our results apply more generally. In particular, we have the following.

\begin{thm}\label{thm:gen bounds}
For any graph $G$ with chromatic index $c$ and fractional chromatic index $c_f$,
\[\left\lfloor\tfrac{1}{2c}|E(G)|\right\rfloor-1 \leq \cms(G) \leq \tfrac{1}{c_f}|E(G)|.\]
Furthermore, for any integers $\Delta \geq 2$ and $n \geq \Delta+1$, there is a graph $G$ of order $n$ such that $\cms(G) \leq \frac{1}{\Delta+1}|E(G)|$.
\end{thm}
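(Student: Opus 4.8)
Here is my plan.

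For the bound $\cms(G)\le\tfrac1{c_f}|E(G)|$, suppose $\cms(G)=s\ge1$ and fix a cyclic ordering of $E(G)$ in which every $s$ consecutive edges form a matching. There are exactly $|E(G)|$ windows of $s$ consecutive edges; each is a matching in $\mathcal{M}(G)$, and each edge of $G$ lies in exactly $s$ of them. Hence the function $\omega$ that assigns weight $\tfrac1s$ to each of these windows is a fractional edge colouring of $G$ of weight $\tfrac1s|E(G)|$, so $c_f\le\tfrac1s|E(G)|$, i.e.\ $s\le\tfrac1{c_f}|E(G)|$ (when $\cms(G)=0$ there is nothing to prove). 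For the last assertion, I would take $G$ to be the disjoint union of $K_{\Delta+1}$ with $n-\Delta-1$ isolated vertices; then $|V(G)|=n$, $|E(G)|=\binom{\Delta+1}{2}$, and $\cms(G)=\cms(K_{\Delta+1})$ because isolated vertices play no role in edge orderings. For $\Delta\ge3$ the known value $\cms(K_{\Delta+1})=\lfloor\tfrac{\Delta-1}{2}\rfloor$ gives $\cms(G)\le\tfrac\Delta2$, while for $\Delta=2$ we have $\cms(G)=\cms(K_3)=\cms(C_3)=1=\tfrac\Delta2$; in either case $\cms(G)\le\tfrac\Delta2=\tfrac1{\Delta+1}\binom{\Delta+1}{2}=\tfrac1{\Delta+1}|E(G)|$, as required. (Alternatively, for even $\Delta$ the odd-set bound applied to $V(K_{\Delta+1})$ gives $c_f(K_{\Delta+1})\ge\Delta+1$, and one applies the inequality just proved.)

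For the lower bound $\cms(G)\ge\lfloor\tfrac1{2c}|E(G)|\rfloor-1$, set $m=|E(G)|$ and $s=\lfloor\tfrac{m}{2c}\rfloor-1$, and assume $s\ge1$. The plan is to start from a proper $c$-edge-colouring and, by the usual exchanges along components of the symmetric difference of two colour classes of unequal size, make it balanced, so that every class $C_1,\dots,C_c$ has size $\lfloor m/c\rfloor$ or $\lceil m/c\rceil$; since $2\lfloor\tfrac m{2c}\rfloor\le\lfloor\tfrac mc\rfloor$ this forces $|C_i|\ge2s+2$ for all $i$. Arrange the classes around a cycle, reserving for $C_i$ a block $R_i$ of $|C_i|$ consecutive positions in the ordering to be built, and write $P_i$ and $S_i$ for the first $s$ and the last $s$ positions of $R_i$ (these are disjoint since $|C_i|\ge2s$). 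As every $|C_i|$ exceeds $s$, any window of $s$ consecutive edges either lies inside one block $R_i$, and so inside the matching $C_i$, or is split across a single junction $R_i\mid R_{i+1}$, and so is contained in $S_i\cup P_{i+1}$. It therefore suffices to order the edges within each block so that $S_i\cup P_{i+1}$ is a matching for every $i$.

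Equivalently, I must choose for each $i$ disjoint $s$-subsets $P_i,S_i\subseteq C_i$ with every $S_i\cup P_{i+1}$ a matching. The tool is the bipartite \emph{conflict graph} $H_i$ with one vertex per edge of $C_i\cup C_{i+1}$ and an edge for each pair of these edges that meet in $G$; its maximum degree is at most $2$, so it is a disjoint union of paths and even cycles, and $S_i\cup P_{i+1}$ is a matching precisely when it is independent in $H_i$. A single junction is easy in isolation: listing $C_i$ and $C_{i+1}$ along the components of $H_i$ (taken in an order that keeps the two lists in step) places any conflicting pair of edges, one from each class, in nearly the same position, so $|C_i|\ge2s+2$ keeps the last $s$ edges of $R_i$ away from the first $s$ edges of $R_{i+1}$. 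The difficulty I expect to be central is that these choices are coupled all the way around the cycle: $P_i$ is constrained by $H_{i-1}$ and $S_i$ by $H_i$, yet they must be disjoint inside $C_i$, and there are only two spare edges per class, so a greedy pass around the cycle breaks down. I would attack this by choosing the pairs $(S_i,P_{i+1})$ globally rather than block by block: commit first a matching pair $(S_c,P_1)$ across the wrap-around junction, then pick the remaining pairs so that each $S_i$ is a run along the components of $H_i$ (and hence meets few edges of $C_{i+1}$) while staying disjoint from the already-fixed $P_i$. Carrying this out consistently around the whole cycle, while respecting the two-edge budget in each class and using the path/even-cycle structure of every $H_i$, is where the real work lies.
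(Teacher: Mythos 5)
Your proof of the upper bound $\cms(G)\leq\frac{1}{c_f}|E(G)|$ is exactly the paper's argument (Lemma~\ref{lem:fracChromatic}). For the final assertion your route is different from and simpler than the paper's: since the statement only constrains the order of $G$, taking $K_{\Delta+1}$ plus isolated vertices and invoking $\cms(K_{\Delta+1})=\lfloor\frac{\Delta-1}{2}\rfloor$ does the job, whereas the paper builds graphs of maximum degree $\Delta$ containing the special subgraph $B_\Delta$ and invokes Lemma~\ref{lem:gen bound given subG}(ii); the paper's heavier machinery buys connected/regular examples needed elsewhere (Theorem~\ref{thm:k reg}), but for the literal statement your construction is valid.

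The lower bound, however, has a genuine gap, and you have correctly located it yourself: after arranging the balanced colour classes $C_1,\dots,C_c$ around a cycle you insist that each $S_i\cup P_{i+1}$ be a \emph{matching}, which forces you to choose, inside every class of size only $2s+2$, two disjoint $s$-subsets $P_i$ and $S_i$ each constrained by a different junction. That global selection problem around the cycle is exactly what you leave unsolved (``where the real work lies''), and with only two edges of slack per class it is not clear a greedy or local-exchange argument closes it. The paper avoids the problem entirely by weakening the requirement: it splits each $H_i$ into \emph{arbitrary} disjoint halves $H_i'$ and $H_i''$ of size $t$ \emph{first} (so disjointness is free and there is no coupling), and then only chooses \emph{orderings}. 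The point is that $H_i''\cup H_{i+1}'$ need not be a matching; it suffices that adjacent edges are far apart in the concatenated ordering $\ell_i''\vee\ell_{i+1}'$, and Lemma~\ref{lem:2 edge colourable} guarantees that \emph{any} two edge-disjoint matchings of equal size $t$ admit orderings with $\cms(\ell_0\vee\ell_1)\geq t-1$ (proved by reducing the union to paths and even cycles, much like your conflict-graph observation, but applied to the whole halves rather than to selecting an independent set). Each junction is then handled independently, the $-1$ in the theorem absorbing the loss. To repair your argument you should drop the ``$S_i\cup P_{i+1}$ is a matching'' goal and instead prove and apply a statement of the form of Lemma~\ref{lem:2 edge colourable}.
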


The lower bound in Theorem~\ref{thm:gen bounds} differs by at most two from an analogous bound for matching sequenceability given in \cite{ChiNa}.

We organise the rest of the paper as follows. In Section~\ref{Preliminaries} we introduce some notation and preliminary results. In Section~\ref{2reg} we consider $2$-regular graphs and prove Theorem~\ref{thm:2 reg}. Theorem~\ref{thm:gen bounds} is proved in Section~\ref{genBounds}. In Section~\ref{givenPartition} we establish a lower bound on the cyclic matching sequenceability of a regular graph assuming the existence of a partition of its edges with suitable properties. Finally in Section~\ref{findPartition}, we show that regular graphs do admit such partitions and prove Theorem~\ref{thm:k reg}.

\section{Preliminaries}
\label{Preliminaries}

For an integer $n$, let $\Z_n$ represent the additive group of integers modulo $n$. In this paper, graphs will always be simple. Two edges in a graph are \emph{adjacent} if they are both incident
on the same vertex. A \emph{matching} is a $1$-regular graph. The \emph{union} $G \cup H$ of two graphs $G$ and $H$ is the graph with vertex set $V(G) \cup V(H)$ and edge set $E(G) \cup E(H)$. An {\em ordering} of a graph $G$ with $m$ edges is a bijective function
$\ell \;:\; E(G) \rightarrow \Z_{m}$.
The image of $e$ under~$\ell$ is called the {\em label} of $e$. We will sometimes specify an ordering $\ell$ by giving the tuple $(\ell^{-1}(0),\ldots,\ell^{-1}(m-1))$ rather than the function $\ell$. A set of edges of $G$ is {\em consecutive} in $\ell$ if their labels form a set of consecutive integers
and is {\em cyclically consecutive} in $\ell$ if their labels form a set of consecutive integers
modulo $m$.

Let $\ell$ be an ordering of a graph $G$ with $m$ edges and let $e$ and $e'$ be distinct edges of $G$. We define $d_\ell(e,e')$, the \emph{forward distance from $e$ to $e'$ in $\ell$}, to be the smallest positive integer $d$ such that $\ell(e)+d = \ell(e')$, where the addition takes place in $\Z_{m}$. We define $d_\ell\{e,e'\}$, the \emph{distance between $e$ and $e'$ in $\ell$}, to be $\min\{d_\ell(e,e'),d_\ell(e',e)\}$. Define $\cms(\ell)$ to be the largest element $s$ of $\{1,\ldots,m\}$ such that $d_\ell\{e,e'\} \geq s$ for any pair $\{e,e'\}$ of edges adjacent in $G$. Similarly, define $\ms(\ell)$ to be the largest element $s$ of $\{1,\ldots,m\}$ such that $d_\ell(e,e') \geq s$ for any ordered pair $(e,e')$ of edges adjacent in $G$ such that $\ell(e)<\ell(e')$.  Note that, for a graph $G$, $\ms(G)$ and $\cms(G)$, as defined in the introduction, are the maximum values of $\ms(\ell)$ and $\cms(\ell)$ respectively over all orderings $\ell$ of $G$. If $G$ is a matching, then obviously $\cms(G)= \ms(G)=|E(G)|$.

We first prove the upper bound of Theorem~\ref{thm:gen bounds}. To our knowledge this connection between the cyclic matching sequenceability of a graph and its fractional chromatic index has not been observed before.

\begin{lem}\label{lem:fracChromatic}
For any graph $G$ with fractional chromatic index $c_f$, $\cms(G) \leq \frac{1}{c_f}|E(G)|$.
\end{lem}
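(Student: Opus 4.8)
The plan is to extract, from any ordering $\ell$ of $G$ with $m = |E(G)|$ edges, a fractional edge colouring whose weight is small, and then invoke the definition of $c_f$ as an infimum. Write $s = \cms(\ell)$. For each $i \in \Z_m$, let $M_i = \{\ell^{-1}(i), \ell^{-1}(i+1), \ldots, \ell^{-1}(i+s-1)\}$ be the set of $s$ cyclically consecutive edges starting at label $i$. By the definition of $\cms(\ell)$, every two edges at distance less than $s$ in $\ell$ are non-adjacent in $G$, so each $M_i$ is a matching of $G$. Thus the $M_i$ are $m$ matchings of $G$.

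Now I would define $\omega \colon \mathcal{M}(G) \to \mathbb{R}^{\geq 0}$ by setting $\omega(M) = \tfrac{1}{s}\cdot|\{i \in \Z_m : M_i = M\}|$ (and $\omega(M) = 0$ for matchings not of this form). The key observation is that a fixed edge $e$ lies in exactly $s$ of the sets $M_0, \ldots, M_{m-1}$, namely those $M_i$ with $i \in \{\ell(e)-s+1, \ldots, \ell(e)\}$. Hence $\sum_{M \in \mathcal{M}_e(G)} \omega(M) = \tfrac{1}{s}\cdot s = 1$ for every edge $e$, so $\omega$ is a fractional edge colouring. Its weight is $\sum_{M} \omega(M) = \tfrac{1}{s}\sum_{i \in \Z_m} 1 = \tfrac{m}{s}$. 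Therefore $c_f \leq \tfrac{m}{s}$, i.e.\ $s \leq \tfrac{1}{c_f}m = \tfrac{1}{c_f}|E(G)|$.

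Finally, taking $\ell$ to be an ordering attaining $\cms(G) = \cms(\ell)$ gives $\cms(G) \leq \tfrac{1}{c_f}|E(G)|$, as required. (One should handle the degenerate case $s = m$ separately, but then $G$ is a matching, $c_f = 1$, and the inequality is trivial; a similar remark applies if $G$ has no edges.)

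I do not anticipate a serious obstacle here; the only point requiring a little care is the double-counting step — verifying that each edge is covered by exactly $s$ of the windows $M_i$ and that each window really is a matching — and making sure the argument correctly treats the cyclic (wrap-around) nature of the indices in $\Z_m$. The rest is bookkeeping and the definition of the fractional chromatic index as an infimum.
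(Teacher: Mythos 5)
Your proposal is correct and follows essentially the same route as the paper: both take an ordering $\ell$ attaining $\cms(G)$, turn the $s$-windows of cyclically consecutive edges into matchings weighted $\tfrac{1}{s}$, and compare the resulting fractional edge colouring's weight $\tfrac{m}{s}$ with $c_f$. Your version is slightly more explicit about the double count (each edge lying in exactly $s$ windows) and the degenerate case $s=m$, but the argument is the same.
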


\begin{proof}
Let $s=\cms(G)$ and let $\ell$ be an ordering of $G$ with $\cms(\ell)=s$. Let $\mathcal{L}$ be the set of matchings in $G$ whose edges form a set of $s$ cyclically consecutive edges in $\ell$. Let $\omega: \mathcal{M}(G) \rightarrow \mathbb{R}^{\geq 0}$ be defined by $\omega(M)=\frac{1}{s}$ if $M \in \mathcal{L}$ and $\omega(M)=0$ otherwise. Then $\omega$ is a fractional edge colouring of $G$ with weight $\frac{1}{s}|E(G)|$. So $\frac{1}{s}|E(G)| \geq c_f$ and the result follows.
\end{proof}

For edge-disjoint graphs $G_0$ and $G_{1}$,
with labellings $\ell_{0}$ and $\ell_1$ respectively,
let $\ell_0 \vee \ell_{1}$ denote
the ordering $\ell$ of $G= G_0 \cup G_1$
defined by
$\ell(e) = \ell_0 (e)$ if $e \in E(G_0)$ and $\ell(e) = |E(G_0)|+\ell_1 (e)$ if $e \in E(G_1)$.
A {\em matching decomposition} of a graph $G$ is
a set of edge-disjoint matchings of $G$ that partition the edge set of~$G$. A matching decomposition of $G$ into $k$ matchings can also be viewed as a proper edge colouring of $G$ with $k$ colours.
Now we will provide a lower bound on $\cms(G)$, given a matching decomposition of $G$
with certain properties exists, in the form of the proposition below.
Similar results were implicitly used by Alspach~\cite{MR2394738} and Brualdi et. al.~\cite{MR2961987}.

\begin{prop}[\cite{Ma}]\label{prop: Matching decomposition}
Let $G$ be a graph that decomposes into matchings $M_0 ,\ldots , M_{t-1}$, each with at least $m$ edges
and orderings $\ell_0 , \ldots , \ell_{t-1}$, respectively.
If, for some $s \in  \{1,\ldots,m\}$, $\ms(\ell_i  \vee \ell_{i+1}) \geq s$ for all $i\in \Z_t$, then $\cms(G) \geq s$.
\end{prop}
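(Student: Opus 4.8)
The plan is to exhibit an explicit ordering of $G$ witnessing the bound, namely the concatenation of all the $\ell_i$. Extending the binary operation $\vee$ associatively, set $\ell = \ell_0 \vee \ell_1 \vee \cdots \vee \ell_{t-1}$; thus the edges of each $M_i$ occupy a block of $|M_i|$ cyclically consecutive positions in $\ell$, and these blocks occur in the cyclic order $M_0, M_1, \ldots, M_{t-1}$. Since each $M_i$ is a matching, any two edges of $G$ that are adjacent lie in distinct blocks, so to prove $\cms(\ell) \geq s$ it suffices to fix an arbitrary pair $\{e,e'\}$ of adjacent edges with $e \in M_i$, $e' \in M_j$ and $i \neq j$, and show both $d_\ell(e,e') \geq s$ and $d_\ell(e',e) \geq s$.

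First I would bound the forward distance $d_\ell(e,e')$, splitting on whether $j \equiv i+1 \mod{t}$. If it is not, then the forward trip from $e$ to $e'$ passes through every edge of the block $M_{i+1}$, which is distinct from both $M_i$ and $M_j$, so $d_\ell(e,e') \geq |M_{i+1}| \geq m \geq s$ (here we use that each block is nonempty, which holds as $m \geq s \geq 1$). If $j \equiv i+1 \mod{t}$, then $e$ and $e'$ lie in consecutive blocks and the forward trip from $e$ to $e'$ stays within those two blocks; a routine computation with the block offsets shows that $d_\ell(e,e')$ then equals the forward distance from $e$ to $e'$ in the ordering $\ell_i \vee \ell_{i+1}$ of $M_i \cup M_{i+1}$. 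Since $e$ precedes $e'$ in $\ell_i \vee \ell_{i+1}$ and $\ms(\ell_i \vee \ell_{i+1}) \geq s$ by hypothesis, we get $d_\ell(e,e') \geq s$.

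The backward distance $d_\ell(e',e)$ is bounded by the same argument with the roles of $(e,i)$ and $(e',j)$ interchanged: it is at least $|M_{j+1}| \geq s$ when $i \not\equiv j+1 \mod{t}$, and otherwise equals the forward distance from $e'$ to $e$ in $\ell_j \vee \ell_{j+1}$, hence is at least $s$. Taking the minimum gives $d_\ell\{e,e'\} \geq s$ for every adjacent pair of edges, so $\cms(G) \geq \cms(\ell) \geq s$, as required.

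I do not anticipate a substantive obstacle. The only delicate points are the index arithmetic modulo $t$ and the degenerate case $t = 2$, in which the conditions $j \equiv i+1 \mod{t}$ and $i \equiv j+1 \mod{t}$ hold simultaneously; this is harmless because the hypothesis is assumed for every $i \in \Z_t$, so both $\ms(\ell_0 \vee \ell_1) \geq s$ and $\ms(\ell_1 \vee \ell_0) \geq s$, and these two inequalities supply the two required distance bounds respectively.
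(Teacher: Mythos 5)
Your proposal is correct and follows essentially the same route as the paper: both form the concatenated ordering $\ell = \ell_0 \vee \cdots \vee \ell_{t-1}$ and reduce every adjacency check either to a full intervening block of size at least $m \geq s$ or to the hypothesis $\ms(\ell_i \vee \ell_{i+1}) \geq s$ for consecutive blocks. Your write-up is just a more explicit, case-by-case version of the paper's three-sentence argument (and your remark on the $t=2$ wrap-around is a detail the paper leaves implicit).
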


\begin{proof}
Let $\ell= \bigvee_{i=0}^{t-1} \ell_i$.
Consider two distinct edges $e$ and $e'$ that are at distance less than $s$ in $\ell$. Then $e,e' \in E(M_i \cup M_{i+1})$ for some $i \in \Z_t$.
So, by the assumption that $\ms(\ell_i  \vee \ell_{i+1}) \geq s$, $e$ and $e'$ are nonadjacent in $G$.
This proves the proposition.
\end{proof}

We now prove four further lemmas which, like Proposition~\ref{prop: Matching decomposition}, provide lower bounds on the matching sequenceability of concatenations of orderings under various conditions.

\begin{lem}\label{lem:adding three orderings}
Let $X$, $Y$ and $Z$ be edge-disjoint graphs with orderings $\ell_X$, $\ell_Y$ and $\ell_Z$, respectively. Then
\[
\ms(\ell_X \vee \ell_Y \vee \ell_Z) \geq \min\left\lbrace \ms(\ell_X \vee \ell_Y),\ms(\ell_Y \vee \ell_Z), |E(Y)|+ \ms(\ell_X \vee \ell_Z) \right\rbrace.
\]
\end{lem}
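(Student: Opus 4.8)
The plan is to directly verify the distance condition that defines $\ms$ for the concatenated ordering $\ell := \ell_X \vee \ell_Y \vee \ell_Z$ of $G := X \cup Y \cup Z$. Write $s$ for the right-hand side of the claimed inequality, and consider an ordered pair $(e,e')$ of edges that are adjacent in $G$ with $\ell(e) < \ell(e')$; the goal is to show $d_\ell(e,e') \geq s$. Observe first that adjacent edges must lie in the same one of $X$, $Y$, $Z$ (since these are edge-disjoint, but more importantly adjacency is within a single graph here — actually adjacency can only occur inside one of the three graphs as they share no vertices implicitly through the union; in any case $e,e'$ lie in $E(X) \cup E(Y) \cup E(Z)$ and are adjacent, so they lie in the same summand). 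Wait — they need not: $X$, $Y$, $Z$ are merely edge-disjoint, so two adjacent edges could lie in different summands. So the argument must instead track, for the pair $(e,e')$, which of the three blocks $[0,|E(X)|)$, $[|E(X)|, |E(X)|+|E(Y)|)$, $[|E(X)|+|E(Y)|, |E(G)|)$ the labels $\ell(e)$ and $\ell(e')$ fall into.

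The key step is a case analysis on the block containing $e$ and the block containing $e'$. If both lie in the $X$-block or both in the $Y$-block, then they are also adjacent in $X \cup Y$ (respectively carried by $\ell_X \vee \ell_Y$ on an initial segment), so $d_\ell(e,e') = d_{\ell_X \vee \ell_Y}(e,e') \geq \ms(\ell_X \vee \ell_Y) \geq s$; the crucial point is that the forward distance computed in $\ell$ between two edges both in the first $|E(X)|+|E(Y)|$ positions equals the forward distance computed in $\ell_X \vee \ell_Y$, because no wraparound past position $|E(G)|-1$ is involved. Similarly, if both lie in the $Y$-block or both in the $Z$-block, we get $d_\ell(e,e') \geq \ms(\ell_Y \vee \ell_Z) \geq s$. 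The genuinely new case is when $e$ is in the $X$-block and $e'$ is in the $Z$-block: here $e$ and $e'$ are adjacent in $X \cup Z$, and the forward distance in $\ell$ from $e$ to $e'$ is $|E(Y)|$ plus the forward distance from $e$ to $e'$ in $\ell_X \vee \ell_Z$ — because to pass from a position in the $X$-block to a position in the $Z$-block within $\ell$ one traverses the entire intervening $Y$-block of length $|E(Y)|$, whereas in $\ell_X \vee \ell_Z$ one passes directly. Hence $d_\ell(e,e') = |E(Y)| + d_{\ell_X \vee \ell_Z}(e,e') \geq |E(Y)| + \ms(\ell_X \vee \ell_Z) \geq s$. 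In every case $d_\ell(e,e') \geq s$, which gives $\ms(\ell) \geq s$ and proves the lemma.

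The only remaining subtlety — and the step I expect to need the most care — is the bookkeeping in the $(X\text{-block}, Z\text{-block})$ case to confirm that the offset is exactly $|E(Y)|$ and not something that could wrap around and be smaller; this is where one must use that $\ell(e) < \ell(e')$ and that $d_\ell(e,e')$ is the \emph{forward} distance, so no modular shortcut shrinks it below $|E(Y)| + d_{\ell_X \vee \ell_Z}(e,e')$. A clean way to handle all cases uniformly is to note that for edges $e, e'$ with $\ell(e) < \ell(e')$ both lying in $E(X)\cup E(Y)\cup E(Z)$, one has $d_\ell(e,e') = \ell(e') - \ell(e)$, and then to bound this difference from below in each of the three relevant configurations using the definitions of the $\vee$ operation and of $\ms$ for the two-block concatenations. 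No case requires anything beyond arithmetic with these block offsets, so there is no substantial obstacle beyond careful case enumeration.
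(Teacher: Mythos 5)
Your argument is correct and is essentially the same as the paper's: a case analysis on whether the adjacent pair lies in $E(X\cup Y)$, in $E(Y\cup Z)$, or has $e\in E(X)$ and $e'\in E(Z)$, with the forward distance in the last case exceeding that in $\ell_X\vee\ell_Z$ by exactly $|E(Y)|$. The wraparound worry you flag is indeed harmless, since $\ell(e)<\ell(e')$ forces $d_\ell(e,e')=\ell(e')-\ell(e)$.
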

\begin{proof}
Let $G=X \cup Y \cup Z$ and $\ell = \ell_X \vee \ell_Y \vee \ell_Z$. Let $e$ and $e'$ be a pair of adjacent edges in $G$ with $\ell(e)<\ell(e')$. If $e,e' \in E(X \cup Y)$, then $d_\ell(e,e') \geq  \ms(\ell_X \vee \ell_Y)$, by definition. If $e,e' \in E(Y \cup Z)$, then $d_\ell(e,e') \geq  \ms(\ell_Y \vee \ell_Z)$, by definition. Otherwise, $e \in E(X)$ and $e' \in E(Z)$, so $d_{\ell_X \vee \ell_Z}(e,e') \geq  \ms(\ell_X \vee \ell_Z)$ by definition, and hence $d_{\ell}(e,e') \geq  |E(Y)|+\ms(\ell_X \vee \ell_Z)$.
\end{proof}

\begin{lem}\label{lem:adding 4 special orderings}
Let $M_0,M_1,M_2,M_3$ be edge-disjoint matchings of sizes $m_0,m_1,m_2,m_3$ such that $M_0 \cup M_1$, $M_1 \cup M_2$ and $M_2 \cup M_3$
are also matchings. Then, for any orderings $\ell_0,\ell_1,\ell_2,\ell_3$ of $M_0,M_1,M_2,M_3$ respectively,
\[
\ms(\ell_0 \vee \ell_1 \vee \ell_2\vee \ell_3) \geq
\min\{\ms(\ell_0 \vee \ell_2)+m_1,\ms(\ell_1 \vee \ell_3)+m_2,\ms(\ell_0 \vee \ell_3)+m_1+m_2\}.
\]
\end{lem}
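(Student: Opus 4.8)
The plan is to analyze a pair of adjacent edges $e, e'$ in $G = M_0 \cup M_1 \cup M_2 \cup M_3$ with $\ell(e) < \ell(e')$, where $\ell = \ell_0 \vee \ell_1 \vee \ell_2 \vee \ell_3$, and in each case bound $d_\ell(e,e')$ from below by one of the three quantities in the minimum. The crucial structural input is the hypothesis that $M_0 \cup M_1$, $M_1 \cup M_2$ and $M_2 \cup M_3$ are each matchings: this means \emph{no} edge of $M_i$ is adjacent to any edge of $M_{i+1}$ for $i \in \{0,1,2\}$. Consequently, if $e$ and $e'$ are adjacent, they cannot lie in consecutive blocks; the only possibilities are $e, e'$ both in the same $M_i$ (impossible, since each $M_i$ is a matching), $e \in M_0$ and $e' \in M_2$, $e \in M_1$ and $e' \in M_3$, or $e \in M_0$ and $e' \in M_3$.

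First I would handle the case $e \in M_0$, $e' \in M_2$. Here, deleting the block $M_1$ from $\ell$ and keeping only $\ell_0 \vee \ell_2$, the forward distance from $e$ to $e'$ in $\ell_0 \vee \ell_2$ is at least $\ms(\ell_0 \vee \ell_2)$. Restoring $M_1$ inserts exactly $m_1$ edges strictly between the $M_0$-block and the $M_2$-block, so $d_\ell(e,e') \geq \ms(\ell_0 \vee \ell_2) + m_1$. The case $e \in M_1$, $e' \in M_3$ is symmetric: the intervening block is $M_2$, giving $d_\ell(e,e') \geq \ms(\ell_1 \vee \ell_3) + m_2$. For $e \in M_0$, $e' \in M_3$, the two intervening blocks are $M_1$ and $M_2$, contributing $m_1 + m_2$ edges, and the distance within $\ell_0 \vee \ell_3$ is at least $\ms(\ell_0 \vee \ell_3)$, so $d_\ell(e,e') \geq \ms(\ell_0 \vee \ell_3) + m_1 + m_2$. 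Taking the minimum over the three cases that can actually occur yields the claimed bound.

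The one point needing a little care is verifying that "inserting a block of $b$ edges between two blocks increases the forward distance by exactly $b$": this is where I would be explicit. If $e$ has label $\ell_0(e)$ in the ordering $\ell_0 \vee \ell_2$ and $e'$ has label $m_0 + \ell_2(e')$ there, then in $\ell$ the label of $e$ is unchanged and the label of $e'$ becomes $m_0 + m_1 + \ell_2(e')$, so $\ell(e') - \ell(e) = (\text{old difference}) + m_1$; since no wraparound modulo $m_0 + m_1 + m_2 + m_3$ is introduced by this shift when $e$ precedes $e'$, the forward distance genuinely grows by $m_1$. (An entirely analogous and slightly more involved observation, for the three-block concatenation, already appears as Lemma~\ref{lem:adding three orderings}, and one could instead quote that lemma twice.)

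I do not expect a genuine obstacle here; the lemma is essentially bookkeeping once the adjacency structure is pinned down. The only mild subtlety is to make sure that the three "forbidden" cases ($e$ and $e'$ in the same block, or in consecutive blocks) are correctly ruled out using the matching hypotheses on $M_i \cup M_{i+1}$, and hence that the minimum need only be taken over the three surviving cases — which is exactly why the term $\ms(\ell_0 \vee \ell_1)$, $\ms(\ell_1 \vee \ell_2)$, $\ms(\ell_2 \vee \ell_3)$ do \emph{not} appear in the bound.
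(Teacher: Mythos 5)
Your proposal is correct and follows essentially the same argument as the paper: rule out all pairs except $(e,e') \in M_i \times M_j$ for $(i,j) \in \{(0,2),(1,3),(0,3)\}$ using the matching hypotheses, then observe that $d_\ell(e,e') = d_{\ell_i \vee \ell_j}(e,e') + s$ with $s$ the number of intervening edges. The extra care you take over the label-shift computation is sound but not needed beyond what the paper states.
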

\begin{proof}
Let $G=M_0 \cup M_1 \cup M_2 \cup M_3$ and $\ell = \ell_0 \vee \ell_1 \vee \ell_2\vee \ell_3$. Let $e$ and $e'$ be a pair of adjacent edges in $G$ with $\ell(e)<\ell(e')$. Because $e$ and $e'$ are adjacent in $G$, we must have $e \in E(M_i)$ and $e' \in E(M_j)$ for some $(i,j) \in \{(0,2),(1,3),(0,3)\}$. Then $d_{\ell_i \vee \ell_j}(e,e') \geq  \ms(\ell_i \vee \ell_j)$ by definition. Also, $d_{\ell}(e,e') = d_{\ell_i \vee \ell_j}(e,e')+s$, where $s=m_1$ if $(i,j)=(0,2)$, $s=m_2$ if $(i,j)=(1,3)$ and $s=m_1+m_2$ if $(i,j)=(0,3)$. The result follows.
\end{proof}

\begin{lem}\label{lem:ord given other ord bad case}
Let $X$ and $Y$ be edge-disjoint matchings and $\ell_{Y}$ be a fixed ordering of $Y$.
Then there is an ordering $\ell_{X}$ of $X$ such that $\ms( \ell_{X} \vee \ell_{Y}) \geq \frac{1}{2}|E(X)|$.
\end{lem}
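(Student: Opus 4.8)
The plan is to exploit the fact that, because $X$ and $Y$ are both matchings, every edge of $Y$ is adjacent in $X \cup Y$ to at most two edges of $X$ (at most one at each of its endpoints). Write $a = |E(X)|$, assume $a \ge 1$ (the case $a = 0$ being trivial), and let $\ell = \ell_X \vee \ell_Y$ denote the ordering to be built. For each edge $e$ of $X$ that is adjacent to at least one edge of $Y$, let $g(e)$ be the minimum of $\ell_Y(f)$ over all edges $f$ of $Y$ adjacent to $e$. Every pair of adjacent edges in $X \cup Y$ consists of one edge $e$ of $X$ and one edge $f$ of $Y$, and for any such pair we automatically have $\ell(e) = \ell_X(e) \le a-1 < a \le a + \ell_Y(f) = \ell(f)$, so that $d_\ell(e,f) = \ell(f) - \ell(e) = a + \ell_Y(f) - \ell_X(e)$. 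Since $\ell_Y(f) \ge g(e)$ whenever $f$ is adjacent to $e$, it therefore suffices to find an ordering $\ell_X$ of $X$ with $\ell_X(e) \le \lfloor\tfrac a2\rfloor + g(e)$ for every $e$ with $g(e)$ defined: this would give $d_\ell(e,f) \ge a - \lfloor\tfrac a2\rfloor = \lceil\tfrac a2\rceil \ge \tfrac12|E(X)|$ for every adjacent pair, whence $\ms(\ell) \ge \lceil\tfrac a2\rceil$.

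To construct such an $\ell_X$, I would list the edges of $X$ in nondecreasing order of their $g$-values, placing the edges with no adjacent $Y$-edge last (in any order) and breaking ties arbitrarily, and then let $\ell_X$ assign the labels $0,1,\dots,a-1$ in this order. Fix $e \in E(X)$ with $g(e)$ defined and set $i = \ell_X(e)$. By the choice of ordering, each of the $i+1$ edges of $X$ receiving a label in $\{0,1,\dots,i\}$ has $g$-value at most $g(e)$, hence is adjacent to one of the at most $g(e)+1$ edges of $Y$ whose label is at most $g(e)$. As each of those edges of $Y$ is adjacent to at most two edges of $X$, we obtain $i+1 \le 2\bigl(g(e)+1\bigr)$, i.e.\ $i \le 2g(e)+1$.

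It remains to check that $i \le \lfloor\tfrac a2\rfloor + g(e)$, which follows from a two-case split: if $g(e) \le \lfloor\tfrac a2\rfloor - 1$ then $i \le 2g(e)+1 \le g(e)+\lfloor\tfrac a2\rfloor$, while if $g(e) \ge \lfloor\tfrac a2\rfloor$ then $\lfloor\tfrac a2\rfloor + g(e) \ge 2\lfloor\tfrac a2\rfloor \ge a-1 \ge i$. This completes the argument. I do not anticipate a genuine obstacle; the points needing a little care are that $\ell(e) < \ell(f)$ is automatic for an adjacent pair with $e \in E(X)$ and $f \in E(Y)$ (so the displayed inequality really does control every adjacent pair), the tie-breaking convention in the sort (which is what makes the claim ``all $i+1$ early $X$-edges have $g$-value at most $g(e)$'' correct), and the elementary inequality $2\lfloor\tfrac a2\rfloor \ge a-1$. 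Alternatively one could obtain $\ell_X$ from Hall's theorem applied to a bipartite graph between $E(X)$ and the label set, but the sorting argument seems the most economical.
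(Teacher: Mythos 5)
Your proposal is correct and follows essentially the same route as the paper: both sort the edges of $X$ in nondecreasing order of the smallest $\ell_Y$-label among their adjacent $Y$-edges (your $g$ is the paper's $\alpha$), and both use the fact that each edge of $Y$ is adjacent to at most two edges of $X$ to bound the position of an $X$-edge in terms of its $g$-value. The only difference is cosmetic bookkeeping: the paper bounds $\alpha(e_i)\geq\lfloor i/2\rfloor$ and computes the forward distance directly, while you invert this to $i\leq 2g(e)+1$ and verify $\ell_X(e)\leq\lfloor a/2\rfloor+g(e)$ by a two-case split; both yield $\lceil a/2\rceil$.
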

\begin{proof}
Let $x=|E(X)|$ and $y=|E(Y)|$. For each edge $e \in E(X)$, let $\alpha(e)$ be the smallest label assigned by $\ell_Y$ to an edge adjacent to $e$ if such a label exists, and $\alpha(e)=\infty$ otherwise. Let $\ell_X$ be an ordering $(e_0,\ldots,e_{x-1})$ of $X$ such that $\alpha(e_0) \leq \cdots \leq \alpha(e_{x-1})$. Let $\ell = \ell_{X} \vee \ell_Y$ and
$e$ and $e'$ be adjacent edges in $G$ such that $\ell(e)<\ell(e')$ and $d_{\ell}(e,e')=\ms(\ell)$. Then $e=e_i$ for some $i \in \Z_x$ such that $\alpha(e_i)<\infty$ and $\alpha(e_i)=\ell_Y(e')$. By our definition of $\ell_X$, any edge of $X$ that is not adjacent to an edge of $Y$ occurs after $e_i$ in $\ell_X$ and hence each edge in $\{e_0,\ldots,e_{i-1}\}$ is adjacent to at least one edge of $Y$. Thus, because at most two edges of $X$ are adjacent to each edge of $Y$, we have that $\alpha(e_i) \geq \lfloor\frac{i}{2}\rfloor$ and hence that $d_{\ell}(e,e') =x -i+\alpha(e_i)\geq x-i+\lfloor\frac{i}{2}\rfloor$. So, because $i \leq x-1$, we have $d_{\ell}(e,e') \geq 1+\lfloor\frac{x-1}{2}\rfloor = \lceil\frac{x}{2}\rceil$ and the result follows.
\end{proof}

\begin{lem}\label{lem:ord given other ord}
Let $X$ and $Y$ be edge-disjoint matchings of sizes $x$ and $y$ respectively.
Suppose that $Y$ has $y_1$ edges that are adjacent to one edge in $X$  and
$y_2$ edges adjacent to two edges in $X$.
Let $\ell_{Y}$ be an ordering of $Y$ in which the $y_2$ edges adjacent to two edges in $X$
are the last to occur. Then there is an ordering $\ell_{X}$ of $X$ such that
\[\ms( \ell_{X}\vee\ell_{Y}) \geq \min\{x,x+y-y_1-2y_2\}.\]
\end{lem}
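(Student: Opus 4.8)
The plan is to reuse, essentially verbatim, the ordering $\ell_X$ constructed in the proof of Lemma~\ref{lem:ord given other ord bad case}, and then extract a sharper estimate by exploiting the hypothesis that the $y_2$ ``doubly adjacent'' edges of $Y$ sit at the end of $\ell_Y$. Concretely, for each $e \in E(X)$ let $\alpha(e)$ be the smallest label assigned by $\ell_Y$ to a $Y$-edge adjacent to $e$, with $\alpha(e)=\infty$ if $e$ is adjacent to no edge of $Y$, and take $\ell_X=(e_0,\ldots,e_{x-1})$ with $\alpha(e_0)\le\cdots\le\alpha(e_{x-1})$. Put $\ell=\ell_X\vee\ell_Y$, so $e_i$ has label $i$ and a $Y$-edge in position $p$ of $\ell_Y$ has label $x+p$.

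First I would reduce to a single binding pair. Since $X$ and $Y$ are matchings, any two adjacent edges of $X\cup Y$ are one edge of $X$ and one edge of $Y$, the $X$-edge receiving the smaller label; so either $\ms(\ell)=x+y$ (and we are done) or there is an adjacent pair $(e_i,e')$ with $e_i\in E(X)$, $e'\in E(Y)$ and $d_\ell(e_i,e')=\ms(\ell)$. If $e'$ were in a position larger than $\alpha(e_i)$, then a $Y$-edge adjacent to $e_i$ in position $\alpha(e_i)$ would give a strictly smaller forward distance from $e_i$, a contradiction; hence $e'$ is in position $j:=\alpha(e_i)<\infty$, and $d_\ell(e_i,e')=x-i+j$. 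It remains to prove $x-i+j\ge\min\{x,\,x+y-y_1-2y_2\}$.

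The core is a counting bound on $i$. Each of $e_0,\ldots,e_i$ is adjacent to some $Y$-edge of position at most $j$: for $k<i$ this holds because $\alpha(e_k)\le\alpha(e_i)=j<\infty$, and for $k=i$ because of $e'$. Hence the number of adjacent $(X,Y)$-pairs whose $Y$-edge lies in a position $\le j$ is at least $i+1$. Now invoke the hypothesis on $\ell_Y$: every $Y$-edge in a position $<y-y_2$ is adjacent to at most one edge of $X$, and exactly $y_1$ of these $y-y_2$ edges are adjacent to an edge of $X$ at all. If $j<y-y_2$, the $j+1$ relevant $Y$-edges contribute at most one pair each, so $i+1\le j+1$ and $x-i+j\ge x$. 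If $j\ge y-y_2$, the pairs split into at most $y_1$ coming from positions $<y-y_2$ and at most $2(j-(y-y_2)+1)$ coming from the doubly adjacent edges in positions $y-y_2,\ldots,j$; combining $i+1\le y_1+2(j-y+y_2+1)$ with $j\le y-1$ gives $x-i+j\ge x+y-y_1-2y_2$. The two cases together yield the claimed inequality.

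I expect the only delicate part to be the bookkeeping in the second case: one must use simultaneously that the degree-two $Y$-edges occupy the final block of $\ell_Y$ (so low positions are ``cheap''), that only $y_1$ of the low-position $Y$-edges meet $X$ at all, and that the worst case occurs at the largest admissible position $j=y-1$. The rest is the same adjacency count already carried out in Lemma~\ref{lem:ord given other ord bad case}.
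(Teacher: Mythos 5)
Your proposal is correct and follows essentially the same route as the paper: the same ordering of $X$ by increasing $\alpha$, the same identification of the extremal pair $(e_i,e')$ with $e'$ in position $\alpha(e_i)$, and the same case split according to whether $e'$ lies before or within the final block of doubly adjacent edges. The only difference is cosmetic: you bound $i$ by double-counting adjacent $(X,Y)$-pairs with $Y$-edge in position at most $j$, whereas the paper bounds $\alpha(e_i)$ from below along the sorted prefix — the two counts are equivalent.
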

\begin{proof}
For $i \in \{1,2\}$, let $Y_i$ be the set of edges of $Y$ that are adjacent to exactly $i$ edges in $X$. For each edge $e \in E(X)$, let $\alpha(e)$ be the smallest label assigned by $\ell_Y$ to an edge adjacent to $e$ if such a label exists, and $\alpha(e)=\infty$ otherwise. Let $\ell_X$ be an ordering $(e_0,\ldots,e_{x-1})$ of $X$ such that $\alpha(e_0) \leq \cdots \leq \alpha(e_{x-1})$.

Let $\ell = \ell_X \vee \ell_Y$ and
$e$ and $e'$ be adjacent edges in $G$ such that $\ell(e)<\ell(e')$ and $d_{\ell}(e,e')=\ms( \ell)$. Then $e=e_i$ for some $i \in \Z_x$ such that $\alpha(e_i)<\infty$ and $\alpha(e_i)=\ell_Y(e')$. So $d_{\ell}(e,e')=x-i+\alpha(e_i)$. By our definition of $\ell_X$, any edge of $X$ that is not adjacent to an edge of $Y$ occurs after $e_i$ in $\ell_X$ and hence each edge in $\{e_0,\ldots,e_{i-1}\}$ is adjacent to at least one edge of $Y$. We consider two cases.

Suppose that $e' \in Y_1$. Then each edge in $\{e_0,\ldots,e_{i-1}\}$ is adjacent to at least one edge of $Y_1$ (recall the edges in $Y_2$ occur last in $\ell_Y$) and hence $\alpha(e_i) \geq i$. It follows that $d_{\ell}(e,e')\geq x$ and the result is established.

Suppose instead that $e' \in Y_2$. Let $j$ be the smallest element of $\Z_x$ such that $e_j$ is not adjacent to an edge in $Y_1$ and note that $j \leq y_1$ and that $\alpha(e_j) \geq y-y_2$ because the edges of $Y_2$ occur last in $\ell_Y$. So, because at most two edges of $X$ are adjacent to each edge of $Y_2$, we have that $\alpha(e_i) \geq \alpha(e_j)+\lfloor\frac{i-j}{2}\rfloor \geq y-y_2+\lfloor\frac{i-j}{2}\rfloor$. Thus,
\[d_{\ell}(e,e') = x-i+\alpha(e_i) \geq x+y-y_2-\lceil\tfrac{i+j}{2}\rceil \,.\]
Now, we saw that $j \leq y_1$ and we must have $i \leq y_1+2y_2-1$ for otherwise $\alpha(e_i)=\infty$. Thus, $\lceil\frac{i+j}{2}\rceil \leq y_1+y_2$ and hence $d_{\ell}(e,e') \geq x+y-y_1-2y_2$, and again the result is established.
\end{proof}

\section{2-regular graphs}\label{2reg}
In this section we will prove Theorem~\ref{thm:2 reg}.
We will require the result of Brualdi et al.~\cite{MR2961987} on $\cms(C_n)$
that was mentioned in the introduction.
\begin{thm}[Brualdi et al.~\cite{MR2961987}]\label{thm: cycle seq}
For all $n \geq 3$, $\cms(C_n) =\left\lfloor \frac{n-1}{2} \right\rfloor$.
\end{thm}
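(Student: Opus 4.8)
The plan is to prove matching upper and lower bounds on $\cms(C_n)$, in each case treating the parity of $n$ separately. Throughout, write $m=\lfloor\frac{n-1}{2}\rfloor$ and label the edges of $C_n$ as $e_0,\ldots,e_{n-1}$ so that $e_i$ and $e_{i+1}$ are adjacent for each $i\in\Z_n$; these are exactly the adjacent pairs. Recall that $\cms(\ell)$ is the largest $s$ with $d_\ell\{e,e'\}\ge s$ for every adjacent pair $\{e,e'\}$, and that $\cms(C_n)$ is the maximum of $\cms(\ell)$ over all orderings $\ell$.

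For the upper bound I would first note that any two of the $n$ edges are at distance at most $\lfloor\frac{n}{2}\rfloor$ in any ordering, so $\cms(C_n)\le\lfloor\frac{n}{2}\rfloor$. When $n$ is odd this already equals $m$. (The same odd-case bound also follows from Lemma~\ref{lem:fracChromatic} together with the fact that an odd cycle has fractional chromatic index $\frac{2n}{n-1}$.) When $n$ is even the bound must be improved by one, and here I would argue that $\cms(\ell)=\frac{n}{2}$ is impossible: it would force every adjacent pair to lie at distance exactly $\frac{n}{2}$, that is, at antipodal positions; but each edge has a unique antipode while having two distinct neighbours in $C_n$, a contradiction. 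Hence $\cms(C_n)\le\frac{n}{2}-1=m$ for even $n$.

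For the lower bound I would exhibit, for each $n$, an explicit ordering $\ell$ with $\cms(\ell)=m$; equivalently, writing $g(i)$ for the position of $e_i$, I want the cyclic distance between $g(i)$ and $g(i+1)$ to be at least $m$ for every $i$. When $n$ is odd, the ordering $(e_0,e_2,e_4,\ldots)$ given by $\ell^{-1}(t)=e_{2t}$ (indices mod $n$), which is a genuine ordering since $\gcd(2,n)=1$, does the job: consecutive edge-indices are sent to positions differing by $2^{-1}\equiv\frac{n+1}{2}\pmod n$, whose cyclic distance is exactly $\frac{n-1}{2}=m$. When $n\equiv0\pmod4$ the analogous constant-step ordering $g(i)=(\frac{n}{2}+1)i$ works, since $\frac{n}{2}+1$ is odd and coprime to $n$ and the induced distance on every adjacent pair is $\frac{n}{2}-1=m$.

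The hard part is the remaining case $n\equiv2\pmod4$, where no constant step suffices: the only step values of cyclic distance at least $m=\frac{n}{2}-1$ are $\frac{n}{2}-1$, $\frac{n}{2}$ and $\frac{n}{2}+1$, and none of these generates all of $\Z_n$ (the two even ones are not coprime to $n$, while the odd one, $\frac{n}{2}$, generates only a two-element subgroup). I would instead assemble the ordering from these near-antipodal steps directly. Within each parity class of positions the step $\frac{n}{2}+1$ cyclically permutes the $\frac{n}{2}$ positions of that class (here $\gcd(\frac{n+2}{4},\frac{n}{2})=1$), while a single step of magnitude $\frac{n}{2}$ switches parity. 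So the plan is to traverse the even positions, interrupt this with a step of $\frac{n}{2}$ to cross into the odd positions, run through all of them, and cross back with a second step of $\frac{n}{2}$, choosing the two crossing points so that the even traversal resumes at an unused position and finally closes up. The main obstacle is to verify that the crossing points can always be chosen so that the partial sums $g(0),g(1),\ldots$ are all distinct modulo $n$ — so that $\ell$ is a genuine ordering — while every step retains cyclic distance at least $m$; I would handle this by giving the step sequence explicitly and checking the balance condition (equally many $+(\frac{n}{2}-1)$ and $+(\frac{n}{2}+1)$ steps) forced by $\sum_i\bigl(g(i+1)-g(i)\bigr)\equiv0\pmod n$.
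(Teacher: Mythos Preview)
The paper does not prove this statement at all: Theorem~\ref{thm: cycle seq} is quoted from Brualdi, Kiernan, Meyer and Schroeder and used as a black box, so there is no ``paper's own proof'' to compare against.

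Your argument is essentially sound. The upper bound is clean and correct in both parities; the antipode-counting argument for even $n$ is a nice way to rule out $\cms(\ell)=\tfrac{n}{2}$. The lower-bound constructions for $n$ odd and $n\equiv0\pmod 4$ via a constant-step ordering are correct (your coprimality check for $\tfrac{n}{2}+1$ when $4\mid n$ is fine). For $n\equiv2\pmod4$ your diagnosis that no single step works is right, and your proposed fix---using steps $\tfrac{n}{2}\pm1$ within each parity class of positions and two parity-switching steps of $\tfrac{n}{2}$---is a viable route: your claim that $\gcd\bigl(\tfrac{n+2}{4},\tfrac{n}{2}\bigr)=1$ is correct (with $n=4k+2$ it reduces to $\gcd(k+1,2k+1)=1$), and the balance condition does force equally many $+\bigl(\tfrac{n}{2}-1\bigr)$ and $+\bigl(\tfrac{n}{2}+1\bigr)$ steps. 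What remains is to actually write down a step sequence and verify distinctness of the partial sums; this is doable but you have only asserted you would do it, not done it. To finish, either carry out that verification or, more simply, give the ordering directly and check the $n$ adjacent pairs (for instance, one standard choice is to place $e_{2j}$ at position $j$ and $e_{2j+1}$ at position $\tfrac{n}{2}+j$ for $0\le j<\tfrac{n}{2}$, which gives distance $\tfrac{n}{2}$ between $e_{2j}$ and $e_{2j+1}$ and distance $\tfrac{n}{2}-1$ between $e_{2j+1}$ and $e_{2j+2}$).
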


We first prove a useful result that gives an ordering of a particular type for a class 1 graph that is either a single cycle or a union of vertex-disjoint paths.

\begin{lem}\label{lem:good ord 2-colourable}
Let $H_0$ and $H_1$ be edge-disjoint matchings such that $|E(H_0)|=|E(H_1)|=t$ for some integer $t \geq 2$ and $H_0 \cup H_1$ is either a single cycle or a union of vertex-disjoint paths. There exist orderings $\ell_0$ and $\ell_1$ of $H_0$ and $H_1$ respectively such that $\cms(\ell_0 \vee \ell_1) \geq t-1$.
\end{lem}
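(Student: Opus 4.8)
The plan is to construct the orderings $\ell_0$ and $\ell_1$ explicitly by ``following along'' the cycle or paths that make up $H_0 \cup H_1$, so that consecutive edges in $\ell_0 \vee \ell_1$ correspond to edges that are far apart along the structure of $H_0 \cup H_1$. The key observation is that if $H_0 \cup H_1$ is a single cycle $C_{2t}$ with vertices $v_0, v_1, \ldots, v_{2t-1}$ in cyclic order, then (after possibly swapping the names of $H_0$ and $H_1$) $H_0$ consists of the $t$ edges $v_0v_1, v_2v_3, \ldots, v_{2t-2}v_{2t-1}$ and $H_1$ consists of the $t$ edges $v_1v_2, v_3v_4, \ldots, v_{2t-1}v_0$. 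So it is natural to try $\ell_0 = (v_0v_1, v_2v_3, \ldots, v_{2t-2}v_{2t-1})$ and $\ell_1 = (v_1v_2, v_3v_4, \ldots, v_{2t-1}v_0)$ and check that the concatenation $\ell_0 \vee \ell_1$ has $\cms$ at least $t-1$. An equivalent and cleaner route is to invoke Theorem~\ref{thm: cycle seq}: $\cms(C_{2t}) = \lfloor \frac{2t-1}{2}\rfloor = t-1$, so there is an ordering $\lambda$ of $E(C_{2t})$ with $\cms(\lambda) = t-1$; the remaining work is to argue that one may assume $\lambda$ lists all edges of $H_0$ before all edges of $H_1$, i.e. that $\lambda = \ell_0 \vee \ell_1$ for suitable $\ell_0, \ell_1$. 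The first $t$ edges in such an optimal ordering of $C_{2t}$ pairwise nonadjacent (since $t - 1 \ge t - 1$, any two among the first $t$ are within cyclic distance... actually one needs $t$ consecutive edges to form a matching, which is exactly $\cms(\lambda) \ge t-1$ giving every $t-1$ consecutive a matching — so I should be careful here and may instead directly exhibit the ordering).

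Given that subtlety, I expect the cleanest approach is the direct construction. First I would handle the single-cycle case: label the cycle as above, set $\ell_0$ and $\ell_1$ to traverse the two perfect matchings of $C_{2t}$ in the order induced by a walk around the cycle, and then verify that for any two adjacent edges $e, e'$ of $C_{2t}$, their cyclic distance in $\ell_0 \vee \ell_1$ (which has length $2t$) is at least $t-1$. Two edges of $C_{2t}$ that are adjacent share a vertex $v_j$; one of them is in $H_0$ and the other in $H_1$ (consecutive edges along the cycle alternate between the matchings), so one gets label roughly $j/2$ in $\ell_0$ and the other roughly $t + j'/2$ in $\ell_1$ for indices $j, j'$ differing by $1$; the distance works out to about $t$ in one direction and about $t$ in the other, comfortably at least $t-1$. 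This is a routine but slightly fiddly index computation with floors and the wraparound, which is the main place to be careful.

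Second I would handle the case where $H_0 \cup H_1$ is a union of vertex-disjoint paths $P^{(1)}, \ldots, P^{(r)}$. Here each path, having its edges properly $2$-coloured, contributes edges alternately to $H_0$ and $H_1$. I would order the edges of each path by walking from one end to the other, and concatenate: within $\ell_0$ list the $H_0$-edges of $P^{(1)}$ in path order, then those of $P^{(2)}$, etc., and similarly for $\ell_1$, being careful to align the ``direction'' of traversal consistently so that an edge and its path-neighbour end up far apart in $\ell_0 \vee \ell_1$. Since edges in different paths are never adjacent, only adjacencies within a single path matter, and within a path the analysis is the same local computation as in the cycle case (in fact easier, since there is no wraparound to worry about within a path, only the global wraparound of the length-$2t$ cyclic ordering, which only helps). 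The main obstacle overall is bookkeeping: getting the traversal directions and the interleaving of the per-path blocks right so that the worst-case cyclic distance is provably $\ge t-1$ in every case, including the ``seam'' between the last block of $\ell_1$ and the first block of $\ell_0$ under the cyclic wraparound — but because $\ell_0$ and $\ell_1$ each have exactly $t$ edges and adjacent edges always lie one in each of $H_0, H_1$, this seam is automatically safe.
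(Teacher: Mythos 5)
There is a genuine gap, and it is located exactly where you flagged ``the main place to be careful.'' In the single-cycle case the ordering you write down, $\ell_0=(v_0v_1,v_2v_3,\ldots,v_{2t-2}v_{2t-1})$ and $\ell_1=(v_1v_2,v_3v_4,\ldots,v_{2t-1}v_0)$, does not work: the edges $v_{2t-1}v_0$ and $v_0v_1$ are adjacent (they share $v_0$) but receive labels $2t-1$ and $0$, so their cyclic distance in $\ell_0\vee\ell_1$ is $1$, not ``about $t$.'' (Already for $t=3$ this gives $\cms(\ell_0\vee\ell_1)=1<t-1$.) Traversing both matchings in the same direction is the wrong choice; the paper's construction lists the edges of $H_1$ in the \emph{reverse} of the traversal order (so the two labellings run towards each other around the cycle), and then a short check shows every adjacent pair lands at distance exactly $t-1$ or $t$. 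Relatedly, your closing claim that the seam is ``automatically safe ... because adjacent edges always lie one in each of $H_0,H_1$'' has the logic backwards: the two edges straddling each seam are precisely one from $H_0$ and one from $H_1$, so they are exactly the pairs that \emph{can} be adjacent, and in your cycle ordering one such pair is.

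For the union-of-paths case your plan (order each path by walking along it, concatenate the per-path blocks, align directions) is only a sketch, and the condition to be verified is global --- every adjacent pair must be at cyclic distance at least $t-1$, not merely the seam pairs --- so the ``bookkeeping'' you defer is the entire content of that case. The paper sidesteps it with a different and cleaner device: induct on $|V(H_0\cup H_1)|$ by picking an edge $yy'\in E(H_0)$ with $y\notin V(H_1)$ and an edge $zz'\in E(H_1)$ with $z\notin V(H_0)$ (from different paths if there are several), merging $y$ and $z$ into one vertex to get a smaller cycle-or-paths instance, applying induction, and then splitting the vertex back apart; the base case is the single $2t$-cycle handled explicitly as above. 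If you want to keep a fully explicit construction you would need to specify the block order and traversal directions precisely and prove the distance bound for all adjacent pairs; as written, neither half of the argument goes through.
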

\begin{proof}
We proceed by induction on $|V(H_0 \cup H_1)|$. Let $H=H_0 \cup H_1$.
If $|V(H)|=2t$ then $H$ is a cycle of length $2t$. We may assume its vertex set is $\mathbb{Z}_{2t}$ and its edge set is
$\{e_{i}: i \in \mathbb{Z}_{2t} \}$, where $e_{i} = \{ i,i+1\}$ and
$H_j =\{e_{i}\;:\; i \in \mathbb{Z}_{2t} \,\text{ and }\, i\equiv j \pmod{2}\}$ for $j \in \Z_2$. Let
\[
\ell_j(e_{i}) = i\quad \text{and} \quad \ell_{j+1}(e_{2t-1-i})=i
\]
for all $ i \in \{0,\ldots, t-1\}$ and $j \in \Z_2$ such that  $j \equiv i \pmod{2}$. Note that $\ell_j$ is an ordering of $H_j$ for each $j \in \Z_2$. Let $\ell=\ell_0 \vee \ell_1$ and for an edge $e \in E(H)$, let $\ell^*(e)=\ell_j(e)$ where $j$ is the element of $\Z_2$ such that $e \in E(H_j)$. Let $\{e_{h-1},e_h\}$, where $h \in \mathbb{Z}_{2t}$, be an arbitrary pair of adjacent edges of $H$ and note that one of these edges is from $H_0$ and the other is from $H_1$. If $h \in \{1,\ldots,t-1\}$, then $d_\ell\{e_{h-1},e_h\}=t-1$ because $\ell^*(e_{h-1})=\ell^*(e_{h})-1$. Similarly, if $h \in \{t+1,\ldots,2t-1\}$, then $d_\ell\{e_{h-1},e_h\}=t-1$ because $\ell^*(e_{h-1})=\ell^*(e_{h})+1$. Finally, if $h \in \{0,t\}$, then $d_\ell\{e_{h-1},e_h\}=t$ because $\ell^*(e_{h-1})=\ell^*(e_{h})$. Thus it follows that $\cms(\ell_0 \vee \ell_1) = t-1$ and we have proved the result in the case where $|V(H)|=2t$.

Now suppose that $|V(H)|>2t$. Then $H$ is a union of $k$ disjoint paths for some $k \geq 1$. There are edges $yy' \in E(H_0)$ and $zz' \in E(H_1)$ such that $y \notin V(H_1)$, $z \notin V(H_0)$ and, if $k \geq 2$, then $y$ and $z$ are in different paths. Let $H'_0$ and $H'_1$ be the matchings obtained from $H_0$ and $H_1$ by merging the vertices $y$ and $z$ into a new vertex $x$. Then $H'_0 \cup H'_1$ is either a single cycle or a union of paths, and $|V(H'_0 \cup H'_1)|=|V(H_0 \cup H_1)|-1$. So, by induction, there are orderings $\ell'_0$ and $\ell'_1$ of $H'_0$ and $H'_1$, respectively such that $\cms(\ell'_0 \vee \ell'_1) \geq t-1$. Then $\cms(\ell_0 \vee \ell_1) \geq t-1$ where $\ell_0$ and $\ell_1$ are the orderings of $H_0$ and $H_1$ obtained from $\ell'_0$ and $\ell'_1$ by replacing $xy'$ with $yy'$ in $\ell_0$ and $xz'$ with $zz'$ in $\ell_1$. So the result follows by induction.
\end{proof}

Our next lemma implies that $\cms(G) \geq \frac{n-2}{2}$ for each 2-regular class 1 graph $G$ of order $n$, but also says more.

\begin{lem}\label{lem:2 edge colourable}
Let $H_0$ and $H_1$ be edge-disjoint matchings such that $|E(H_0)|=|E(H_1)|=t$ for some integer $t \geq 1$.
There are orderings $\ell_0$ and $\ell_1$ of $H_0$ and $H_1$, respectively,
such that $\cms(\ell_0 \vee \ell_1) \geq t-1$.
\end{lem}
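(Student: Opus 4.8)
The plan is to translate the statement into one about integer labels, and then to combine Lemma~\ref{lem:good ord 2-colourable} (handling the cyclic components of $H_0\cup H_1$) with a bookkeeping argument (handling the path components). First dispose of the case $t=1$: then $\ell_0\vee\ell_1$ orders just two edges, so $\cms(\ell_0\vee\ell_1)\geq 1>0=t-1$ for any choice of $\ell_0,\ell_1$. So assume $t\geq 2$. The key observation is a dictionary between positions and labels: if $A$ and $B$ are edge-disjoint matchings with $|E(A)|=|E(B)|=k$ and $\ell_A,\ell_B$ are orderings of $A,B$, then for adjacent $e\in E(A)$ and $f\in E(B)$ the two forward distances between $e$ and $f$ in $\ell_A\vee\ell_B$ (in which $e$ sits in position $\ell_A(e)$ and $f$ in position $k+\ell_B(f)$) are $k+\ell_B(f)-\ell_A(e)$ and $k-\ell_B(f)+\ell_A(e)$. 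Since $A$ and $B$ are matchings, every adjacent pair of $A\cup B$ has one edge in each of them, so $\cms(\ell_A\vee\ell_B)\geq k-1$ holds exactly when $|\ell_A(e)-\ell_B(f)|\leq 1$ for every adjacent $e\in E(A)$, $f\in E(B)$. Applying this with $(A,B)=(H_0,H_1)$ and $k=t$, it suffices to construct bijections $\ell_0\colon E(H_0)\to\{0,\dots,t-1\}$ and $\ell_1\colon E(H_1)\to\{0,\dots,t-1\}$ such that $|\ell_0(e)-\ell_1(f)|\leq 1$ for every adjacent $e\in E(H_0)$, $f\in E(H_1)$.

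Now use the structure of $H:=H_0\cup H_1$: it has maximum degree at most $2$, hence is a vertex-disjoint union of paths and cycles, and each cycle alternates between $H_0$ and $H_1$ and so is even. Let $C^{(1)},\dots,C^{(r)}$ be the cyclic components, let $j_s\geq 2$ be the number of edges of $C^{(s)}$ in each $H_i$, set $J=\sum_s j_s$, and let $P$ be the union of the path components, so $P$ has exactly $t-J$ edges in each $H_i$. For each $s$, Lemma~\ref{lem:good ord 2-colourable} applied to $C^{(s)}\cap H_0$ and $C^{(s)}\cap H_1$ gives orderings of these matchings whose concatenation has $\cms$ at least $j_s-1$; by the dictionary these orderings are bijections into $\{0,\dots,j_s-1\}$ under which adjacent edges of $C^{(s)}$ receive labels differing by at most $1$. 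Shifting the $s$-th such pair by $\sum_{s'<s}j_{s'}$ in both $H_0$ and $H_1$, the cyclic components use up precisely the labels $\{0,\dots,J-1\}$ in each $H_i$ while retaining, within each cycle, the property that adjacent edges receive labels differing by at most $1$.

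It remains to label the $t-J$ path edges of each $H_i$ using $\{J,\dots,t-1\}$, which I would do by a greedy walk. Order the path components $Q_1,\dots,Q_q$ so that those having more $H_0$-edges than $H_1$-edges and those having more $H_1$-edges than $H_0$-edges alternate, beginning with one of the former kind; this is possible because $P$ is balanced, so the two kinds occur equally often. Starting with counters $c_0=c_1=J$, process $Q_1,\dots,Q_q$ in turn, traversing each $Q_u$ from an end whose edge lies in $H_1$ if $Q_u$ has at least as many $H_1$- as $H_0$-edges, and from an end otherwise (in which case both ends lie in $H_0$), and giving each edge encountered the current value of the relevant counter before incrementing it. This clearly yields bijections onto $\{J,\dots,t-1\}$; the step I expect to cost the most care is verifying that $|\ell_0(e)-\ell_1(f)|\leq 1$ for every adjacent pair lying in a common $Q_u$. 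I would prove this by tracking $\delta:=c_0-c_1$ at the start of each $Q_u$: the chosen ordering forces $\delta\in\{0,1\}$ throughout and $\delta=0$ at the start of every component of the former kind, and a short case check of the three possible traversal patterns shows that under exactly these conditions every within-component adjacency has label-difference at most $1$.

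Finally, since every adjacency of $H$ lies inside a single component, the above shows $|\ell_0(e)-\ell_1(f)|\leq 1$ for all adjacent $e\in E(H_0)$, $f\in E(H_1)$, and the dictionary then yields $\cms(\ell_0\vee\ell_1)\geq t-1$, completing the proof. The cyclic part is a direct appeal to Lemma~\ref{lem:good ord 2-colourable} and the translation via the dictionary is routine arithmetic; the only genuine obstacle is the ordering-and-traversal bookkeeping for the path components and the accompanying case analysis controlling $\delta$.
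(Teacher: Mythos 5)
Your proof is correct, but it routes the path components quite differently from the paper. The paper's proof of Lemma~\ref{lem:2 edge colourable} is an induction on $t$: it peels off a subgraph $H^\dag$ that is either a single component with an even number of edges or a \emph{pair} of odd-length path components (so that $H^\dag$ is balanced between $H_0$ and $H_1$), applies Lemma~\ref{lem:good ord 2-colourable} to $H^\dag$, recurses on the remainder, and concatenates the two block orderings. You instead give one global construction: the reformulation of $\cms(\ell_0\vee\ell_1)\geq t-1$ as ``adjacent edges receive labels differing by at most $1$'' (which is a correct and clean dictionary, since $d_\ell\{e,f\}=t-|\ell_0(e)-\ell_1(f)|$ for adjacent $e\in E(H_0)$, $f\in E(H_1)$), Lemma~\ref{lem:good ord 2-colourable} only for the cycles, and a greedy walk with the invariant $\delta=c_0-c_1\in\{0,1\}$ for the paths. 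Your alternation of $H_0$-heavy and $H_1$-heavy path components plays exactly the role of the paper's pairing of odd-length paths, and your $\delta$-bookkeeping checks out (pattern (a) needs $\delta\in\{0,1\}$, pattern (b) needs $\delta=0$, and the alternation guarantees precisely this); the only detail you leave implicit is where the balanced (even-length) path components sit in the order, but since they preserve $\delta$ and only require $\delta\in\{0,1\}$, any placement works. What the paper's version buys is brevity — it reuses Lemma~\ref{lem:good ord 2-colourable} wholesale, including its treatment of unions of vertex-disjoint paths — whereas your version is more self-contained for the path part and makes the verification more transparent via the label-difference lens.
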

\begin{proof}
We proceed by induction on $|E(H_0)|=|E(H_1)|$. Let $H=H_0 \cup H_1$. Clearly $H$ must have a subgraph $H^\dag$ such that $H^\dag$ is either
\begin{itemize}
    \item
a component of $H$ that has an even number of edges (and is either a path or a cycle); or
    \item
a union of two components of $H$, each of which is a path of odd length, with the property that $|E(H_0) \cap E(H^\dag)|=|E(H_1) \cap E(H^\dag)|$.
\end{itemize}
Let $|E(H^\dag)|=2s$, noting that $|E(H^\dag)|$ is even, and let $H^\dag_i$ be the matching of size $s$ with edge set $E(H_i) \cap E(H^\dag)$ for each $i \in \Z_2$. By Lemma~\ref{lem:good ord 2-colourable} there are orderings $\ell^\dag_0$ and $\ell^\dag_1$ of $H^\dag_0$ and $H^\dag_1$, respectively, such that $\cms(\ell^\dag_0 \vee \ell^\dag_1) \geq s-1$.

If $H^\dag=H$, then the result follows by taking $\ell_i=\ell^\dag_i$ for $i \in \Z_2$, so we may assume that $H^\dag\neq H$. For $i \in \Z_2$, let $H^\ddag_i$ be the matching of size $t-s$ with edge set $E(H_i) \setminus E(H^\dag_i)$. By our inductive hypothesis, there are orderings  $\ell^\ddag_0$ and $\ell^\ddag_1$ of $H^\ddag_0$ and $H^\ddag_1$ such that $\cms(\ell^\ddag_0 \vee \ell^\ddag_1) \geq t-s-1$. Let $\ell_i=\ell^\dag_i \vee \ell^\ddag_i$ for $i \in \Z_2$.

Any pair $\{e,e'\}$ of adjacent edges in $H$ such that $e \in E(H^\ddag_0)$ and $e' \in E(H^\ddag_1)$ are at distance at least $t-s -1$ in $\ell^\ddag_0 \vee \ell^\ddag_1$ because $\cms(\ell^\ddag_0 \vee \ell^\ddag_1) \geq t-s-1$, and hence are at distance at least $t-s-1+s=t-1$ in $\ell_0 \vee \ell_1$. Likewise, any pair $\{e,e'\}$ of adjacent edges in $H$ such that $e \in E(H^\dag_0)$ and $e' \in E(H^\dag_1)$ are at distance at least $s-1$ in $\ell^\dag_0 \vee \ell^\dag_1$, and hence are at distance at least $s-1+t-s=t-1$ in $\ell_0 \vee \ell_1$. Because $H^\dag$ and $H^\ddag$ are vertex-disjoint, these arguments cover all pairs of adjacent edges in $H$ and so $\cms(\ell_0 \vee \ell_1) \geq t-1$.
\end{proof}

\begin{lem}\label{lem:class 1 2 regular cms}
For each even $n \geq 4$, we have $\cms_1(n,2)=\frac{n-2}{2}$.
\end{lem}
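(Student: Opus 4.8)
The plan is to prove the two inequalities $\cms_1(n,2) \ge \frac{n-2}{2}$ and $\cms_1(n,2) \le \frac{n-2}{2}$ separately. For the lower bound, let $G$ be any $2$-regular class~$1$ graph of order $n$. Being class~$1$, $G$ has a proper $2$-edge-colouring, i.e.\ a decomposition into two perfect matchings $H_0$ and $H_1$, each of size $\frac{n}{2}$. Applying Lemma~\ref{lem:2 edge colourable} with $t=\frac{n}{2}$, we obtain orderings $\ell_0$ and $\ell_1$ of $H_0$ and $H_1$ with $\cms(\ell_0 \vee \ell_1) \ge \frac{n}{2}-1 = \frac{n-2}{2}$. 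Since $\cms(G)$ is the maximum of $\cms(\ell)$ over all orderings $\ell$ of $G$, this gives $\cms(G) \ge \frac{n-2}{2}$, and hence $\cms_1(n,2) \ge \frac{n-2}{2}$.

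For the upper bound, I would exhibit a single $2$-regular class~$1$ graph $G$ of order $n$ with $\cms(G) \le \frac{n-2}{2}$. The natural candidate, when $4 \mid n$, is a disjoint union of two cycles $C_{n/2}$; when $n \equiv 2 \pmod 4$ one must be slightly more careful since $C_{n/2}$ is then an odd cycle and hence class~$2$, so instead I would take, say, $C_4 \cup C_{(n-4)/2} \cup \cdots$ or simply a union of even cycles whose orders sum to $n$ (such a union always exists and is class~$1$; a single $C_n$ itself is class~$1$ when $n$ is even, and by Theorem~\ref{thm: cycle seq} has $\cms(C_n) = \lfloor\frac{n-1}{2}\rfloor = \frac{n-2}{2}$ — so in fact $G = C_n$ already works for every even $n \ge 4$). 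So the cleanest argument is just: take $G = C_n$, note it is $2$-regular and class~$1$ (it has a proper $2$-edge-colouring since $n$ is even), and invoke Theorem~\ref{thm: cycle seq} to get $\cms_1(n,2) \le \cms(C_n) = \frac{n-2}{2}$.

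Combining the two bounds yields $\cms_1(n,2) = \frac{n-2}{2}$ for all even $n \ge 4$, which is the statement. I do not anticipate a genuine obstacle here: both directions are short, with the lower bound being an immediate consequence of Lemma~\ref{lem:2 edge colourable} (whose inductive work has already been done) applied to the two colour classes of the edge $2$-colouring, and the upper bound being witnessed by the cycle $C_n$ together with the already-quoted Theorem~\ref{thm: cycle seq}. The only point requiring a moment's care is confirming that $C_n$ is class~$1$ for even $n$, which is clear since its edges can be alternately $2$-coloured.
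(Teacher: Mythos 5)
Your proposal is correct and follows essentially the same route as the paper: the lower bound comes from Lemma~\ref{lem:2 edge colourable} applied to the two perfect matchings forming a class~1 $2$-regular graph, and the upper bound is witnessed by the even cycle $C_n$ via Theorem~\ref{thm: cycle seq}. Your explicit remark that $C_n$ is class~1 for even $n$ is a small point the paper leaves implicit, but there is no substantive difference.
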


\begin{proof}
By Theorem~\ref{thm: cycle seq}, for each even $n \geq 4$, we have $\cms(H) = \frac{n-2}{2}$ if $H$ is an $n$-cycle and hence $\cms(n,2) \leq \frac{n-2}{2}$. Also, Lemma~\ref{lem:2 edge colourable} implies that $\cms(H) \geq \frac{n-2}{2}$ for each $2$-regular class 1 graph $H$ of even order $n \geq 4$, because any such graph $H$ is the union of two edge-disjoint matchings each of size $\frac{n}{2}$.
\end{proof}

The \emph{matching number}  of a graph $G$ is the maximum size of a matching in $G$. If $\ell$ is an ordering of a graph $H$ and $G$ is a subgraph of $H$ then the \emph{subordering of $\ell$ induced by $G$} is the unique ordering $\ell_G$ of $G$ such that, for all $e, e' \in E(G)$, $\ell_G(e)<\ell_G(e')$ if and only if  $\ell(e)<\ell(e')$. An ordering $\ell^*$ is a \emph{subordering} of an ordering $\ell$ of a graph $H$ if $\ell^*$ is induced by $G$ for some subgraph $G$ of $H$. Our next lemma provides upper bounds on the cyclic matching sequenceability of a graph based on the properties of one of its subgraphs. We only need the simpler first part in this section, but the more involved second part is required in Section~\ref{genBounds}.

\begin{lem}\label{lem:gen bound given subG}
Let $H$ be a graph and $G$ be a subgraph of $H$ with matching number $\nu$. Then
\begin{itemize}
    \item[(i)]
$\cms(H) \leq \frac{\nu|E(H)|}{|E(G)|}$
    \item[(ii)]
$\cms(H) \leq \frac{|E(H)|}{\left\lfloor \frac{1}{\nu}(|E(G)|-\cms(G)) \right\rfloor+1}$.
\end{itemize}
\end{lem}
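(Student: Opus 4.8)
The plan is to exploit a simple averaging/pigeonhole idea: if $\ell$ is an ordering of $H$ achieving $\cms(H)=s$, then every window of $s$ cyclically consecutive edges of $\ell$ is a matching, and in particular its intersection with $E(G)$ is a matching in $G$, hence has size at most $\nu$.

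For part (i), I would cover the cyclic sequence of $|E(H)|$ edge-slots by $|E(H)|$ windows, one starting at each slot, each of length $s$. Each edge of $G$ lies in exactly $s$ of these windows, so summing $|W \cap E(G)| \le \nu$ over all windows $W$ gives $s\,|E(G)| \le \nu\,|E(H)|$, i.e. $s \le \frac{\nu |E(H)|}{|E(G)|}$, which is the claimed bound. (One can instead partition the cyclic order into $\lfloor |E(H)|/s\rfloor$ disjoint windows of length $s$ plus a leftover block, but the all-windows averaging argument is cleanest and avoids floors.)

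For part (ii) I need the sharper fact that the edges of $G$ cannot be squeezed into a short cyclic interval. Consider the ordering $\ell$ with $\cms(\ell)=s=\cms(H)$ and let $\ell_G$ be the subordering induced by $G$. The key observation is that $\cms(\ell_G) \ge s$: any two edges of $G$ that are adjacent in $H$ (hence in $G$) are at distance at least $s$ in $\ell$, and deleting the edges of $H\setminus G$ only shrinks distances, so in fact I should argue more carefully — what I actually get directly is a lower bound on how spread out consecutive $G$-edges must be. Here is the mechanism: look at the positions in $\ell$ of the edges of $G$, listed cyclically as $f_0,f_1,\dots,f_{|E(G)|-1}$. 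Within any block of $s$ consecutive slots of $\ell$ there are at most $\nu$ edges of $G$ (as in part (i)), so between $f_j$ and $f_{j+\nu}$ (cyclically) there are at least $s$ slots, i.e. the $\ell$-gap is at least $s-$something; more precisely the $G$-edges $f_j,\dots,f_{j+\nu}$ cannot all fit in a window of length $s-1$, so the forward $\ell$-distance from $f_j$ to $f_{j+\nu}$ is at least $s$. Iterating around the cycle $\lfloor |E(G)|/\nu\rfloor$ times and accounting for the remaining slots, I obtain $|E(H)| \ge \lfloor |E(G)|/\nu\rfloor\cdot s$ — wait, that only recovers (i). The refinement in (ii) must instead use $\cms(G)$ itself: the total number of slots of $\ell$ occupied or "blocked" by $G$ is at least $|E(G)|$ slots for the edges plus, for each maximal run structure, the slack forced by $\cms(G)$. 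The right way to see it: $\cms(\ell_G)$ (as a cyclic ordering of $G$ alone) is at most $\cms(G)$, so there exist two $G$-edges adjacent in $G$ at $\ell_G$-distance at most $\cms(G)$; but in $\ell$ they are at distance at least $s$, which forces at least $s - \cms(G)$ non-$G$-edges to lie between them on the shorter side; that accounts for only one gap. To cover all of $E(H)\setminus E(G)$ I partition the $G$-edges into $\lceil |E(G)|/\nu\rceil$-ish groups realizing matchings and...

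Given the delicacy, the cleaner route I would actually write is: each cyclic window of length $\lfloor \tfrac1\nu(|E(G)|-\cms(G))\rfloor+1$ in $\ell$ must contain an edge of $G$ — because a window missing all of $G$ would, together with the structure of $\ell_G$, let us extend to a cyclic ordering of $G$ with $\cms$ exceeding $\cms(G)$ (place that many consecutive non-$G$ slots as a "buffer" and note the $G$-edges on either side are forced apart), contradiction. Then, since $H$ has $|E(H)|$ slots and every window of that length meets $E(G)$, we cannot have $s$ consecutive non-$G$... no: we directly get that consecutive $G$-edges in $\ell$ are within distance $\lfloor \tfrac1\nu(|E(G)|-\cms(G))\rfloor+1$, so the $|E(G)|$ edges of $G$ are distributed with all cyclic gaps at most that value, whence $|E(H)| \le |E(G)|\cdot(\lfloor \tfrac1\nu(|E(G)|-\cms(G))\rfloor+1)$ — still backwards. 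I will therefore instead bound $s=\cms(H)$ from above by noting $s \le$ (length of the longest all-non-$G$ cyclic run in $\ell$) is impossible since such a run can't be too long; the correct final inequality is $s \le \frac{|E(H)|}{\lfloor\frac1\nu(|E(G)|-\cms(G))\rfloor+1}$ because one can pack $\lfloor\frac1\nu(|E(G)|-\cms(G))\rfloor+1$ disjoint length-$s$ windows only if $|E(H)|$ is at least that many times $s$, using that each length-$s$ window contains at most $\nu$ edges of $G$ while at least $|E(G)|-\cms(G)$ edges of $G$ must be "used up" to separate things.

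\textbf{Main obstacle.} The genuine difficulty is part (ii): correctly formalizing why $\lfloor\frac1\nu(|E(G)|-\cms(G))\rfloor+1$ disjoint length-$s$ blocks must fit inside the $|E(H)|$ slots. The argument hinges on showing that deleting from $\ell$ a carefully chosen length-$s$ block (which is a matching, so meets $G$ in $\le\nu$ edges) and repeating yields, after enough steps, a cyclic ordering of $G$ certifying $\cms(G) \ge$ something larger than $|E(G)|-\nu\cdot(\text{number of blocks})$, contradicting the definition of $\cms(G)$ once the block count exceeds $\frac1\nu(|E(G)|-\cms(G))$. Getting the floor and the "$+1$" exactly right, and verifying that the deleted blocks can be taken pairwise disjoint, is the crux; everything else is the routine averaging of part (i).
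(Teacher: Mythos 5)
Your part (i) is correct: the window\-/averaging argument (each of the $|E(H)|$ cyclic windows of length $s$ meets $E(G)$ in a matching of $G$, hence in at most $\nu$ edges, and each edge of $G$ lies in exactly $s$ windows) gives $s|E(G)|\leq \nu|E(H)|$ directly. The paper instead routes this through the fractional chromatic index ($c_f(H)\geq c_f(G)\geq \frac{1}{\nu}|E(G)|$ combined with Lemma~\ref{lem:fracChromatic}); the two are morally the same, since the uniform weighting of windows is exactly the fractional edge colouring used there, but your version is self-contained and perfectly acceptable.

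Part (ii), however, has a genuine gap: you cycle through several candidate mechanisms, correctly observe that each one either only recovers (i) or points in the wrong direction, and never land on a completed argument. The missing idea is the following. Let $\ell$ attain $\cms(H)$ and let $\ell'$ be the subordering induced by $G$. By the definition of $\cms(G)$ there are adjacent edges $e_0,e_1$ of $G$ with $\ell'(e_0)=0$ and $\ell'(e_1)=a\leq\cms(G)$. Now greedily define $e_2,\ldots,e_{h-1}$, where $h=\lfloor\frac{1}{\nu}(|E(G)|-\cms(G))\rfloor+1$, by letting $e_i$ be the first edge of $G$ after $e_{i-1}$ in $\ell'$ such that the block of $\ell'$ from $e_{i-1}$ to $e_i$ contains a pair of adjacent edges; since any $\nu+1$ edges of $G$ contain an adjacent pair, each step advances at most $\nu$ positions in $\ell'$, so $\ell'(e_{h-1})\leq a+(h-2)\nu\leq |E(G)|-\nu$, and hence the wrap-around block from $e_{h-1}$ back to $e_0$ also contains at least $\nu+1$ edges of $G$ and therefore an adjacent pair. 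This produces $h$ cyclic gaps in $\ell$, each containing a pair of adjacent edges, and since the gaps sum to $|E(H)|$ one of them has length at most $\frac{1}{h}|E(H)|$, which bounds $\cms(H)$. The point you kept missing is that the ``$-\cms(G)$'' saving is spent entirely on the \emph{first} gap (the close adjacent pair guaranteed by $\cms(G)$), while every subsequent gap costs only $\nu$ positions \emph{of $\ell'$}, not of $\ell$; your attempts conflate distances in $\ell$ with distances in $\ell'$ and try to distribute the saving across all gaps, which is why they come out backwards.
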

\begin{proof}
We first prove (i). Because any fractional edge colouring of $H$ can be restricted in the natural fashion to a fractional edge colouring of $G$ with equal or lesser weight, we have $c_f(H) \geq c_f(G)$. Furthermore, it is clear from the definition of fractional chromatic index that $c_f(G) \geq \frac{1}{\nu}|E(G)|$. Thus $c_f(H) \geq \frac{1}{\nu}|E(G)|$ and (i) follows from Lemma~\ref{lem:fracChromatic}.

We now prove (ii). Let $\ell$ be an ordering of $H$ and let $h=\lfloor \frac{1}{\nu}(|E(G)|-\cms(G))\rfloor+1$. We will find a subordering $\ell''=(e_0,\ldots,e_{h-1})$ of $\ell$ so that, for each $i \in \Z_{h}$, the sequence of cyclically consecutive edges in $\ell$ that begins with $e_i$ and ends with $e_{i+1}$ contains a pair of adjacent edges. This will suffice to complete the proof of (ii) because $d_\ell(e_j,e_{j+1}) \leq \frac{1}{h}|E(H)|$ for some $j \in \Z_h$ since $\sum_{i \in \Z_h}d_\ell(e_i,e_{i+1})=|E(H)|$.

Let $\ell'$ be the subordering of $\ell$ induced by $G$.
There must be two adjacent edges $e_0$ and $e_1$ of $G$ at distance at most $\cms(G)$ in $\ell'$ by the definition of $\cms(G)$.
Because we are considering $\ell$ and $\ell'$ cyclically, we can assume without loss of generality that $\ell'(e_0)=0$ and $\ell'(e_1)=a$ for some $a \leq \cms(G)$.
Now define $(e_0,\ldots,e_{h-1})$ by, for each $i \in \{2,\ldots,h-1\}$, letting $e_{i}$ be the first edge after $e_{i-1}$ in $\ell'$ such that the sequence of consecutive edges in $\ell'$ that begins with $e_{i-1}$ and ends with $e_{i}$ contains a pair of adjacent edges. We claim that $(e_0,\ldots,e_{h-1})$ is a subordering of $\ell$ with the required properties.
To see this, first observe that $a+(h-1)\nu \leq |E(G)|$ by the definition of $h$.
Thus $(e_0,\ldots,e_{h-1})$ is indeed a subordering of $\ell$ because, for each $i \in \{1,\ldots,h-2\}$, we have that $\ell'(e_{i-1}) < \ell'(e_i) \leq a + (i-1)\nu$, by the definition of $\nu$.
In particular, $\ell'(e_{h-1}) \leq a+(h-2)\nu \leq |E(G)|-\nu$ and hence the sequence of cyclically consecutive edges in $\ell'$ that begins with $e_{h-1}$ and ends with $e_{0}$ contains a pair of adjacent edges. So $(e_0,\ldots,e_{h-1})$ is a subordering of $\ell$ with the required properties and (ii) is proved.
\end{proof}

Applying Lemma~\ref{lem:gen bound given subG}(i) to $2$-regular class 2 graphs we obtain the following.
\begin{cor}\label{cor:lower bound m-cycle}
Let $H$ be a $2$-regular class 2 graph, whose shortest odd cycle has length $m$. Then $\cms(H) \leq \frac{m-1}{2m}|E(H)|$.
\end{cor}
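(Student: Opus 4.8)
The plan is to deduce this directly from Lemma~\ref{lem:gen bound given subG}(i), taking a shortest odd cycle of $H$ as the subgraph $G$. First I would observe that a $2$-regular class 2 graph must contain an odd cycle: since $H$ is $2$-regular it is a vertex-disjoint union of cycles, and a vertex-disjoint union of \emph{even} cycles is class 1, as one can properly $2$-edge-colour it by alternating two colours around each cycle. So if $H$ is class 2 it has an odd cycle, and by hypothesis the shortest one is $G = C_m$ with $m$ odd and $m \geq 3$.

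Next I would record the matching number of $G$. A cycle of odd length $m$ has matching number $\nu = \tfrac{m-1}{2}$. Applying Lemma~\ref{lem:gen bound given subG}(i) with this $G$ then yields
\[
\cms(H) \leq \frac{\nu\,|E(H)|}{|E(G)|} = \frac{\tfrac{m-1}{2}\,|E(H)|}{m} = \frac{m-1}{2m}\,|E(H)|,
\]
which is exactly the asserted inequality. (Note that $x \mapsto \tfrac{x-1}{2x}$ is increasing, so among all odd cycles of $H$ it is indeed the shortest one that gives the strongest bound, matching the statement.)

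There is essentially no obstacle here: all the substance lies in Lemma~\ref{lem:gen bound given subG}(i), which in turn rests on the bound $c_f(C_m) \geq \tfrac{2m}{m-1}$ for odd $m$ coming from the matching-number lower bound on the fractional chromatic index together with Lemma~\ref{lem:fracChromatic}. The only step that warrants a sentence of justification is the claim that a $2$-regular class 2 graph contains an odd cycle, which follows from Vizing's theorem together with the explicit $2$-edge-colouring of a disjoint union of even cycles mentioned above.
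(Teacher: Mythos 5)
Your proof is correct and follows exactly the paper's own argument: take a shortest odd cycle $G$ of $H$, note its matching number is $\frac{m-1}{2}$, and apply Lemma~\ref{lem:gen bound given subG}(i). The extra remarks on why an odd cycle must exist and on monotonicity of $\frac{x-1}{2x}$ are fine but not needed beyond what the paper does.
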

\begin{proof}
Let $G$ be a shortest odd length cycle in $H$. Then $|E(G)|=m$ and the matching number of $G$ is $\frac{m-1}{2}$, so the result follows by applying Lemma~\ref{lem:gen bound given subG}(i).
\end{proof}

To prove Theorem~\ref{thm:2 reg}, it remains to show that $\cms(H) \geq \lfloor\frac{n}{3}\rfloor$ for each $2$-regular graph $H$ of order $n$. In Lemma~\ref{lem:noSingle4Cycle} we establish a slightly stronger result for all $2$-regular graphs that do not contain exactly one $4$-cycle. For the remainder of this section it will be convenient to denote the number of edges in an ordering $\ell$ by $|\ell|$.

\begin{lem}\label{lem:noSingle4Cycle}
Let $G$ be a $2$-regular graph such that $G$ does not contain exactly one $4$-cycle. Then there is an ordering $\ell= \ell_0 \vee \ell_1 \vee \ell_2$ of $G$ such that
\begin{itemize}
    \item[(i)]
the edges of $\ell_i$ form a matching of size $m_i$ for each $i \in \mathbb{Z}_3$, where $m_0,m_1,m_2$ are the unique non-negative integers such that $\lceil\frac{1}{3}|\ell|\rceil \geq m_0 \geq m_1 \geq m_2 \geq \lfloor\frac{1}{3}|\ell|\rfloor$ and $m_0+m_1+m_2=|\ell|$; and
    \item[(ii)]
$d_{\ell}(e,e') \geq |\ell_j|$ for any $j \in \mathbb{Z}_3$ and pair $(e,e')$ of adjacent edges in $G$ such that $e$ is in $\ell_j$ and $e'$ is in $\ell_{j+1}$.
\end{itemize}
\end{lem}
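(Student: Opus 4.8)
The plan is to build the ordering $\ell = \ell_0 \vee \ell_1 \vee \ell_2$ by first understanding the structure of a $2$-regular graph $G$ as a disjoint union of cycles, and then handling the cycles in carefully chosen groups. The key tool will be Lemma~\ref{lem:good ord 2-colourable} applied to appropriately assembled pieces: if I can partition (most of) $E(G)$ into two matchings $H_0, H_1$ of equal size whose union is a disjoint union of paths (or a single cycle), then I get orderings with $\cms \geq$ size${}-1$, which is exactly the kind of forward-distance guarantee demanded by condition (ii) between consecutive colour classes. The trick is that I have three colour classes to fill, not two, and I need the sizes to be as balanced as possible, so I will think of the three classes cyclically and feed edges into pairs of classes $(\ell_0,\ell_1)$, $(\ell_1,\ell_2)$, $(\ell_2,\ell_0)$.

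First I would reduce to the case where every cycle of $G$ has length divisible by $3$ is false in general, so instead I would sort the cycles by length modulo $3$. A cycle of length $3q$ splits evenly into three matchings of size $q$ with the property that each pair of consecutive (cyclically) matchings forms a path-union; a cycle of length $3q+1$ or $3q+2$ leaves a small remainder that must be combined with remainders from other cycles, or with the exceptional $4$-cycle structure. The hypothesis that $G$ does not contain \emph{exactly one} $4$-cycle is precisely what lets me pair up leftover edges: either there are no $4$-cycles (and all remainders come from longer cycles, which are flexible enough to be cut and recombined into path-unions) or there are at least two $4$-cycles (which can be combined with each other, since two disjoint $4$-cycles give $8$ edges that split as $3+3+2$ or similar across the three classes while keeping each consecutive pair a path-union). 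In each case I would explicitly describe how to assign edge-intervals of each cycle to the three classes so that (a) the total sizes come out to $m_0 \geq m_1 \geq m_2$ with consecutive differences at most $1$, and (b) within each cycle the edges placed in classes $j$ and $j+1$ form a subpath whose two halves alternate between the classes — then Lemma~\ref{lem:good ord 2-colourable} (or the single-cycle base case inside its proof) supplies orderings of those halves realizing the required distance $|\ell_j|$.

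For the ordering itself, once the edges are distributed, I would order each class $\ell_j$ by concatenating, over the cycles (and cycle-pairs), the sub-orderings coming from Lemma~\ref{lem:good ord 2-colourable}, being careful about the orientation so that when I form $\ell_j \vee \ell_{j+1}$ the adjacent pairs straddling the boundary still sit at distance $\geq |\ell_j|$; this is the same ``nesting'' bookkeeping used in the proof of Lemma~\ref{lem:2 edge colourable}, where distinct components contribute additively and never interfere. Condition (ii) only concerns adjacent edges lying in consecutive classes $\ell_j, \ell_{j+1}$ — two adjacent edges in the same class cannot occur since each $\ell_j$ is a matching, and adjacent edges in classes $\ell_j$ and $\ell_{j+2} = \ell_{j-1}$ are automatically at distance at least $|\ell_{j+1}| + 1$ purely from the block structure of $\ell$ — so it suffices to verify the bound for the consecutive-class pairs, which the Lemma~\ref{lem:good ord 2-colourable} pieces do by construction.

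The main obstacle I expect is the combinatorial case analysis on cycle lengths modulo $3$ together with the balancing constraint $m_0 \geq m_1 \geq m_2 \geq \lfloor \frac{1}{3}|\ell| \rfloor$: I must show that the leftover edges (one or two per non-divisible cycle) can always be globally redistributed so that the class sizes differ by at most one \emph{and} each contributing block is still a path-union in two consecutive classes. Cycles of length $\equiv 1,2 \pmod 3$ are the awkward ones, and a single $4$-cycle ($4 \equiv 1 \pmod 3$) is genuinely obstructive, which is why it is excluded — with it alone, one cannot avoid an unbalanced or non-path split. Handling the remaining small cases (very short total edge count, a single short cycle of length $3,5,7,\dots$) may require ad hoc explicit orderings, but these are finite in number and routine to check.
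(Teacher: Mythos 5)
Your overall architecture---decompose $G$ into its cycles, distribute the edges of each cycle among the three classes, invoke Lemma~\ref{lem:good ord 2-colourable} on consecutive class pairs, and use the ``not exactly one $4$-cycle'' hypothesis to absorb awkward remainders---is in the same spirit as the paper's, which instead organises everything as an induction on the number of components, peeling off either a cycle of length divisible by $3$ or a single odd cycle and recombining via $\ell_i=\ell_i'\vee\ell_i''$. But your proposal has a genuine gap at exactly the point where the real work lies. When a pair of adjacent edges $e\in\ell_j$, $e'\in\ell_{j+1}$ comes from a single cycle $C$ of $G$, the guarantee you can extract from Lemma~\ref{lem:good ord 2-colourable} or from $\cms(C_{m''})=\lfloor\frac{m''-1}{2}\rfloor$ is on the order of half of $|E(C)|$, whereas condition~(ii) demands $d_\ell(e,e')\geq m_j\approx\frac{1}{3}|E(G)|$, which can be far larger. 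The shortfall must be made up by the edges of the \emph{other} cycles interleaved between the two blocks, and showing that this padding always suffices is not automatic: it is precisely the content of the paper's inequality $\lfloor\frac{1}{2}(m-m'-1)\rfloor+m'_{j+1}\geq m_j$, whose verification needs a case analysis on whether the peeled-off cycle has length $5$, $7$ or at least $9$, combined with the congruence class of $m$ modulo $3$. Your appeal to ``the same nesting bookkeeping'' and the claim that ``distinct components contribute additively and never interfere'' assert exactly what has to be proved; for a leftover $5$- or $7$-cycle against an unfavourable residue of $|E(G)|$ modulo $3$ the margin is razor-thin and must be checked.

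A secondary issue is your reliance on Lemma~\ref{lem:good ord 2-colourable}: it requires the two matchings to have \emph{equal} size and their union to be a single cycle or a disjoint union of paths, and it only delivers $\cms\geq t-1$, one less than the block size. You do not show that your interval assignment can simultaneously meet these structural constraints for every consecutive pair of classes \emph{and} the global balance condition $\lceil\frac{m}{3}\rceil\geq m_0\geq m_1\geq m_2\geq\lfloor\frac{m}{3}\rfloor$, nor is the ``$-1$'' loss accounted for anywhere. Finally, the reason a lone $4$-cycle is obstructive is quantitative rather than structural: the base-case bound $\lfloor\frac{m-1}{2}\rfloor\geq\lceil\frac{m}{3}\rceil$ holds for $m=3$ and all $m\geq 5$ but fails only at $m=4$, since $\cms(C_4)=1<2$. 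Identifying this is what lets the paper run its induction cleanly; the excluded case of exactly one $4$-cycle is then treated by a separate explicit ordering in the proof of Theorem~\ref{thm:2 reg}. Your plan is plausibly salvageable, but as written it defers the decisive quantitative step to ``routine'' checks that are in fact the heart of the proof.
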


\begin{proof}
We proceed by induction on the number of components of the graph. Let $H$ be a $2$-regular graph such that $H$ does not contain exactly one $4$-cycle. Let $m=|E(H)|$ and let $m_0,m_1,m_2$ be the unique non-negative integers such that $\lceil\frac{m}{3}\rceil \geq m_0 \geq m_1 \geq m_2 \geq \lfloor\frac{m}{3}\rfloor$ and $m_0+m_1+m_2=m$. If $H$ is connected or if $H$ contains no odd cycles, then by Theorem~\ref{thm: cycle seq} or Lemma~\ref{lem:good ord 2-colourable}, there is an ordering $\ell$ of $H$ such that $\cms(\ell) = \lfloor\frac{m-1}{2}\rfloor$. Choose $\ell_0, \ell_1, \ell_2$ arbitrarily so that $\ell= \ell_0 \vee \ell_1 \vee \ell_2$ and $|\ell_i|=m_i$ for each $i \in \mathbb{Z}_3$.
Because $m = 3$ or $m \geq 5$ we have $\cms(\ell) \geq \lfloor\frac{m-1}{2} \rfloor \geq \lceil\frac{m}{3}\rceil$ and it follows that the edges of $\ell_i$ form a matching for each $i \in \mathbb{Z}_3$ and that $\ell$ obeys (i) and (ii). So we may suppose that $H$ has $t \geq 2$ components at least one of which is an odd length cycle, and that the lemma holds for $2$-regular graphs with fewer than $t$ components. Let $t^*$ be the number of 4-cycles in $H$, noting that $t^* \neq 1$.

Our strategy will be as follows. We will first choose nonempty subgraphs $H'$ and $H''$ of $H$ such that $H$ is the vertex-disjoint union of $H'$ and $H''$. Let $m'=|E(H')|$,  $m''=|E(H'')|$ and $m'_0,m'_1,m'_2$ be the unique nonnegative integers such that $\lceil\frac{m'}{3}\rceil \geq m'_0 \geq m'_1 \geq m'_2 \geq \lfloor\frac{m'}{3}\rfloor$ and $m'_0+m'_1+m'_2=m'$. We will then find orderings $\ell' = \ell_1' \vee \ell_2' \vee \ell_3'$ and $\ell'' = \ell_1'' \vee \ell_2'' \vee \ell_3''$ of $H'$ and $H''$, respectively, such that $\ell'$ obeys (i) and (ii) and the edges of $\ell''_i$ form a matching of size $m_i-m'_i$ for each $i \in \mathbb{Z}_3$. Finally, we will establish that the ordering $\ell = \ell_1 \vee \ell_2 \vee \ell_3$ of $H$, where $\ell_i=\ell_i' \vee \ell_{i}''$ for $i \in \mathbb{Z}_3$, obeys (i) and (ii).

For the rest of the proof we take $(e,e')$ to be an arbitrary pair of edges that are adjacent in $H$ and $j$ to be an element of $\mathbb{Z}_3$ such that $e \in \ell_j$ and $e' \in \ell_{j+1}$. Because $H$ is a vertex-disjoint union of $H'$ and $H''$, we have that either $e,e' \in E(H')$ or $e,e' \in  E(H'')$. By our construction of $\ell$,
\begin{numcases}{d_{\ell}(e,e') \geq}
   d_{\ell'}(e,e')+m_j-m'_j & \hbox{if $e,e' \in E(H')$} \label{eqn:2-reg class 2 ell'}
   \\
   d_{\ell''}(e,e')+m'_{j+1} & \hbox{if $e,e' \in E(H'')$} \label{eqn:2-reg class 2 ell''}
\end{numcases}
Because $\ell'$ will obey (ii), we will have for each $j \in \Z_3$ that $d_{\ell'}(e,e') \geq m'_j$ and hence $d_{\ell}(e,e') \geq  m_j$ by \eqref{eqn:2-reg class 2 ell'} if $e,e' \in E(H')$. Thus, when checking that $\ell$ satisfies (ii), it will suffice to only consider the case $e,e' \in  E(H'')$. So we henceforth assume that $e,e' \in  E(H'')$.

We now describe how we choose $H'$ and $H''$ and how to find orderings $\ell'$ and $\ell''$ with the appropriate properties.
\begin{itemize}
    \item
If $H$ has a cycle of length congruent to 0 modulo 3, choose $H'$ and $H''$ such that $H'$ is this cycle and $H$ is the vertex-disjoint union of $H'$ and $H''$. Note that $m'_i=\frac{1}{3}m'$ for each $i \in \Z_3$. Then $H'$ contains no $4$-cycle and $H''$ contains $t^*$ $4$-cycles (recall $t^*\neq 1$).  By induction, take orderings $\ell' = \ell_1' \vee \ell_2' \vee \ell_3'$ of $H'$ and $\ell'' = \ell_1'' \vee \ell_2'' \vee \ell_3''$ of $H''$ that obey (i) and (ii). Because $\ell'$ obeys (i), $\ell'_i$ is an ordering of a matching of size $\frac{1}{3}m'$ for each $i \in \Z_3$. Because $\ell''$ obeys (i), $\ell''_i$ is an ordering of a matching of size $m_i-\frac{1}{3}m'=m_i-m'_i$ for each $i \in \Z_3$ and it can be seen that $\ell$ also obeys (i). Because $\ell''$ obeys (ii), we have $d_{\ell''}(e,e') \geq |\ell''_j|=m_j-\frac{1}{3}m'$ and it can be seen by \eqref{eqn:2-reg class 2 ell''} that $d_{\ell}(e,e') \geq m_j$ and hence that $\ell$ obeys (ii).
    \item
Otherwise, choose $H'$ and $H''$ such that $H''$ is a single odd length cycle (recall $H$ contains at least one odd length cycle) and $H$ is the vertex-disjoint union of $H'$ and $H''$. Observe that $H''$ is not a $3$-cycle because we are not in the previous case, so $m'' \geq 5$ and $m''$ is odd. Then $H'$ contains $t^*$ $4$-cycles (recall $t^*\neq 1$) and, by induction, there is an ordering $\ell' = \ell_0' \vee \ell_1' \vee \ell_2'$ of $H'$ that obeys (i) and (ii). By Theorem~\ref{thm: cycle seq}, there is an ordering $\ell''$ of $H''$ such that $\cms(\ell'') = \lfloor\frac{1}{2}(m''-1) \rfloor$. Choose $\ell_0'', \ell_1'', \ell_2''$ arbitrarily so that $\ell''=\ell''_0 \vee \ell''_1 \vee \ell''_2$ and $|\ell''_i|=m_i-m'_i \leq \lceil\frac{1}{3}m''\rceil$ for each $i \in \mathbb{Z}_3$.
Because $m'' \geq 5$, we have $\cms(\ell'') \geq \lfloor\frac{1}{2}(m''-1) \rfloor \geq \lceil\frac{1}{3}m''\rceil$ and it follows that $\ell''_i$ is an ordering of a matching for each $i \in \mathbb{Z}_3$. Hence, because $\ell'$ obeys (i), we also have that $\ell$ obeys (i). By using $d_{\ell''}(e,e') \geq \lfloor\frac{1}{2}(m''-1) \rfloor$, $m''=m-m'$ and $m'' \geq 5$, we see that \eqref{eqn:2-reg class 2 ell''} implies that $d_{\ell}(e,e') \geq m_j$, and hence that $\ell$ obeys (ii) of the claim, provided that
\begin{equation}\label{eqn:2-reg class 2 mess}
\lfloor\tfrac{1}{2}(m-m'-1)\rfloor+m'_{j+1} \geq m_j
\end{equation}
holds for $3 \leq m' \leq m-5$. Let $\epsilon,\epsilon' \in \{-2,-1,0,1,2\}$ be such that $m_{j}=\frac{1}{3}(m+\epsilon)$ and $m'_{j+1}=\frac{1}{3}(m'+\epsilon')$. Because $m'_{j+1}$ and $m_j$ are integers, \eqref{eqn:2-reg class 2 mess} is equivalent to $m-m' > 2m_j-2m'_{j+1}$. Thus, substituting $m_{j}=\frac{1}{3}(m+\epsilon)$ and $m'_{j+1}=\frac{1}{3}(m'+\epsilon')$ and simplifying, \eqref{eqn:2-reg class 2 mess} is also equivalent to
\begin{equation}\label{eqn:2-reg class 2 epsilons}
m-m' > 2\epsilon-2\epsilon'.
\end{equation}
Clearly, \eqref{eqn:2-reg class 2 epsilons} holds when $m-m' \geq 9$, because $|\epsilon|,|\epsilon'| \leq 2$. This leaves the cases when $m-m'=5$ and $m-m'=7$, recalling that $m-m'=m''$ is odd and at least 5. If either $m \equiv 0 \mod{3}$ or $m' \equiv 0 \mod{3}$, then one of $\epsilon$ or $\epsilon'$ is 0 and hence \eqref{eqn:2-reg class 2 epsilons} holds. Thus, we can assume that $m \equiv 1 \mod{3}$ if $m-m' =5$ and $m \equiv 2 \mod{3}$ if $m-m' = 7$. In each of these cases it is now routine to check that \eqref{eqn:2-reg class 2 epsilons} holds, by considering subcases according to the value of $j$ (note that the values of $\epsilon$ and $\epsilon'$ are completely determined by the congruence class of $m$ modulo 3 and the value of $j$). \qedhere
\end{itemize}
\end{proof}

We are now ready to prove Theorem~\ref{thm:2 reg}.

\begin{proof}[\textbf{\textup{Proof of Theorem~\ref{thm:2 reg}.}}]
By Lemma~\ref{lem:class 1 2 regular cms}, we have $\cms_1(n,2)=\frac{n-2}{2}$ for each even $n \geq 4$. It remains to show that $\cms(n,2)=\lfloor\frac{n}{3}\rfloor$ for each integer $n \geq 6$. Let $n \geq 6$ be an integer. By Corollary~\ref{cor:lower bound m-cycle}, any $2$-regular graph $H$ of order $n$ containing a $3$-cycle has $\cms(H) \leq \lfloor\frac{n}{3}\rfloor$. Thus $\cms(n,2) \leq \lfloor\frac{n}{3}\rfloor$ and it suffices to show that each $2$-regular graph $H$ of order $n$ has $\cms(H) \geq \lfloor\frac{n}{3}\rfloor$.

Let $H$ be a $2$-regular graph of order $n$. If $H$ does not contain exactly one $4$-cycle, then the properties of the ordering $\ell$ of $H$ given by Lemma~\ref{lem:noSingle4Cycle} ensure that $\cms(\ell) \geq  \lfloor\frac{n}{3}\rfloor$. Thus, we may assume that $H$ contains exactly one $4$-cycle. Say $H$ is the vertex-disjoint union of $H'$ and $H''$, where $H''$ is the $4$-cycle. Let $e_0,e_1,e_2,e_3$ be the edges of $H''$ so that $e_0,e_2$ and $e_1,e_3$ each form a matching. Let $\ell' = \ell'_0 \vee \ell'_1 \vee \ell'_2$ be an ordering of $H'$ given by Lemma~\ref{lem:noSingle4Cycle}. Let $e^*$ be the last edge in $\ell'_1$ and let $\ell^*_1$ be the ordering obtained from $\ell'_1$ by removing $e^*$. Let
\[\ell=
\left\{
  \begin{array}{ll}
    \ell'_0 \vee (e_0) \vee \ell'_1 \vee (e_2) \vee \ell'_2 \vee (e_1,e_3) & \hbox{if $|E(H')| \equiv 0, 1 \mod{3}$} \\
    \ell'_0 \vee (e_0) \vee \ell^*_1 \vee (e_2,e^*) \vee \ell_2' \vee (e_1,e_3) & \hbox{if $|E(H')| \equiv 2 \mod{3}$.}
  \end{array}
\right.
\]
It is now routine to use the fact that $\ell'$ satisfies (i) and (ii) of Lemma~\ref{lem:noSingle4Cycle} to check that any pair of edges adjacent in $H'$ or in $H''$ are at distance at least $\lfloor\frac{n}{3}\rfloor$ in $\ell$ and hence that the lemma holds. Note that the fact that $\ell'$ satisfies (i) and (ii) of Lemma~\ref{lem:noSingle4Cycle} implies that $e^*$ is not adjacent in $H'$ to any edge in $\ell'_2$ when $|E(H')| \equiv 2 \mod{3}$.
\end{proof}

\section{Upper and lower bounds for general graphs}\label{genBounds}

In this section we find some upper and lower bounds on the cyclic matching sequenceability of general (possibly non-regular) graphs. In particular we will prove Theorem~\ref{thm:gen bounds}. We employ an easily proved result from \cite{McD}. We say that a matching decomposition of a graph is \emph{equitable} if the sizes of any two of the matchings differ by at most 1.

\begin{lem}[\cite{McD}]\label{lem:colEq}
Let $G$ be a graph with chromatic index $c$. For any $t \geq c$, there is an equitable matching decomposition of $G$ with $t$ matchings.
\end{lem}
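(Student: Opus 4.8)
The plan is to start from an arbitrary proper edge colouring of $G$ with $c$ colours and then repeatedly rebalance the colour classes by local recolourings until all class sizes are as equal as possible. First I would fix a matching decomposition $M_1,\dots,M_c$ of $G$ coming from a proper $c$-edge-colouring, and, since $t\ge c$, pad it with $t-c$ empty matchings to obtain matchings $M_1,\dots,M_t$ that partition $E(G)$. If the sizes $|M_1|,\dots,|M_t|$ already differ pairwise by at most $1$ we are done, so suppose that $|M_i|\ge |M_j|+2$ for some $i\ne j$.

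The key step is a standard Kempe-chain exchange. Consider the graph $M_i\cup M_j$; it has maximum degree at most $2$, so each of its components is a path or an even cycle whose edges alternate between colour $i$ and colour $j$. Each cycle component contributes equally many edges of each colour, and each path component contributes numbers of $i$- and $j$-edges that differ by at most $1$, with a surplus $i$-edge precisely when both end edges of the path have colour $i$. Since $M_i\cup M_j$ contains all $|M_i|$ edges of colour $i$ and all $|M_j|$ edges of colour $j$, and $|M_i|>|M_j|$, there must be a path component $P$ whose two end edges both have colour $i$. Swapping colours $i$ and $j$ along $P$ gives a new matching decomposition into $t$ matchings in which $|M_i|$ has dropped by $1$, $|M_j|$ has risen by $1$, and all other sizes are unchanged; each class is still a matching because recolouring a single alternating component of $M_i\cup M_j$ preserves properness of the colouring.

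Finally I would iterate this exchange as long as some pair of class sizes differs by at least $2$, and argue termination via the monovariant $\Phi=\sum_{k=1}^{t}|M_k|^2$. Replacing sizes $a\ge b+2$ by $a-1$ and $b+1$ changes $\Phi$ by $-2(a-b-1)<0$, so $\Phi$ strictly decreases at each step; as $\Phi$ is a nonnegative integer, the process halts after finitely many steps, and at termination no two class sizes differ by more than $1$, i.e.\ the decomposition is equitable. I do not expect a genuine obstacle here: everything reduces to the alternating-path exchange together with this monovariant, and the only point needing a little care is the observation that a global colour imbalance within $M_i\cup M_j$ forces a path component with both end edges of the majority colour, which is immediate from the alternation of colours along each component.
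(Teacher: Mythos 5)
Your argument is correct. Note, however, that the paper itself gives no proof of this lemma: it is quoted directly from McDiarmid's paper \cite{McD} (the authors describe it as ``an easily proved result''), so there is no in-paper argument to compare against. What you have written is the standard Kempe-chain rebalancing proof, and every step checks out: padding with $t-c$ empty classes is legitimate since $t\ge c$; the components of $M_i\cup M_j$ are alternating paths and even cycles; a positive surplus $|M_i|-|M_j|\ge 2$ forces a path component whose two end edges both carry colour $i$ (cycles contribute zero surplus and each path contributes $-1$, $0$ or $+1$); swapping colours along a whole component preserves the matching property of both classes because any edge of another class sharing a vertex with an edge of the component would itself lie in that component; and the monovariant $\Phi=\sum_k|M_k|^2$ strictly decreases by $2(a-b-1)\ge 2$ at each exchange, giving termination. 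This is a complete and correct proof of the cited result.
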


Our next result establishes the lower bound in Theorem~\ref{thm:gen bounds}.

\begin{lem}\label{lem:gen lower bound}
For any graph $H$ with chromatic index $c$,
$\cms(H) \geq \lfloor\frac{1}{2c}|E(H)| \rfloor-1$.
\end{lem}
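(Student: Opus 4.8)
The natural approach is to combine Lemma~\ref{lem:colEq} with Proposition~\ref{prop: Matching decomposition}. The plan is as follows. Write $m=|E(H)|$ and set $s=\lfloor\frac{1}{2c}m\rfloor-1$; we may assume $s\geq 1$, since otherwise the bound $\cms(H)\geq s$ is trivial (recall $\cms(\ell)$ takes values in $\{1,\dots,m\}$ by definition, and in fact $\cms(H)\geq 1$ always). Choose an integer $t$ that is a multiple of the chromatic index $c$ with $t$ roughly $\frac{m}{2s}$, so that $\lfloor m/t\rfloor \geq 2s$. By Lemma~\ref{lem:colEq} applied with this $t\geq c$, $H$ has an equitable matching decomposition into $t$ matchings $M_0,\dots,M_{t-1}$; since the decomposition is equitable and $t\mid$ (nothing forced, but) $t$ matchings share $m$ edges, each matching has size at least $\lfloor m/t\rfloor \geq 2s$.

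The key step is then to orient Proposition~\ref{prop: Matching decomposition} into action: for each $i\in\Z_t$ we need orderings $\ell_i$ of $M_i$ so that $\ms(\ell_i\vee\ell_{i+1})\geq s$ simultaneously for all $i$. The obstacle is that each $M_i$ participates in two consecutive pairs, $M_{i-1}\cup M_i$ and $M_i\cup M_{i+1}$, so the choice of $\ell_i$ must serve both. The trick I would use is to split each $M_i$ into two halves of (nearly) equal size, $M_i^{\mathrm{L}}$ and $M_i^{\mathrm{R}}$, each of size at least $s$, and order $M_i$ as $\ell_i^{\mathrm{L}}\vee\ell_i^{\mathrm{R}}$ where $\ell_i^{\mathrm{L}}$ is chosen (via Lemma~\ref{lem:ord given other ord bad case}, with $X=M_i^{\mathrm{L}}$ and $Y=M_{i-1}^{\mathrm{R}}$) to handle adjacencies looking backward, and $\ell_i^{\mathrm{R}}$ is chosen (again by Lemma~\ref{lem:ord given other ord bad case}, with $X=M_i^{\mathrm{R}}$ and $Y=M_{i+1}^{\mathrm{L}}$) to handle adjacencies looking forward. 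Then an adjacent pair $e\in M_i$, $e'\in M_{i+1}$ with $\ell(e)<\ell(e')$ in $\ell_i\vee\ell_{i+1}$ falls into one of a few cases; the worst case has $e\in M_i^{\mathrm{L}}$ and $e'\in M_{i+1}^{\mathrm{R}}$, where $e$ and $e'$ are automatically at distance at least $|M_i^{\mathrm{R}}|+|M_{i+1}^{\mathrm{L}}|\geq 2s\geq s$; the cases $e\in M_i^{\mathrm{R}}$, $e'\in M_{i+1}^{\mathrm{L}}$ or $e\in M_i^{\mathrm{R}}$, $e'\in M_{i+1}^{\mathrm{R}}$ etc.\ are covered by the $\frac12|E(X)|\geq\lceil s\rceil\geq s$ guarantee from Lemma~\ref{lem:ord given other ord bad case} applied with the appropriate halves.

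I expect the bookkeeping around the exact sizes — making sure each half has at least $s$ edges and that $\lceil\frac12|E(X)|\rceil\geq s$ for every $X$ used — to be the only delicate point, and it comes down to choosing $t$ so that $\lfloor m/t\rfloor\geq 2s$ (say $t$ the least multiple of $c$ exceeding $\frac{m}{2s}$, and checking $t\geq c$, which holds because $\frac{m}{2s}\geq \frac{m}{2(m/(2c)-1)}> c$ for $m$ large enough, with the small cases handled directly). Once the sizes are pinned down, Proposition~\ref{prop: Matching decomposition} immediately gives $\cms(H)\geq s=\lfloor\frac{1}{2c}m\rfloor-1$, completing the proof.
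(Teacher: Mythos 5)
Your overall skeleton --- an equitable matching decomposition, each matching split into a front and a back block, with only the junctions needing care --- is the same as the paper's, but the junction step has a genuine quantitative gap. Your choice of $t$ is in effect forced to be $c$ itself (there is no larger multiple of $c$ with $\lfloor m/t\rfloor\geq 2s$), so each matching has size about $2s$ and each half has size about $s$. The critical case is $e\in M_i^{\mathrm{R}}$, $e'\in M_{i+1}^{\mathrm{L}}$, and there Lemma~\ref{lem:ord given other ord bad case} applied to a half $X$ only guarantees $\ms(\ell_X\vee\ell_Y)\geq\frac{1}{2}|E(X)|\approx\frac{s}{2}$; your claimed chain $\frac{1}{2}|E(X)|\geq\lceil s\rceil\geq s$ would require $|E(X)|\geq 2s$, i.e.\ matchings of size about $4s\approx\frac{2m}{c}$, which is impossible since the $c$ matchings share only $m$ edges. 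As written the argument therefore yields only roughly $\cms(H)\geq\frac{1}{4c}|E(H)|$, half the stated bound. There is also a circularity in how the orderings are chosen: $\ell_i^{\mathrm{R}}$ is to be adapted to $\ell_{i+1}^{\mathrm{L}}$ while $\ell_{i+1}^{\mathrm{L}}$ is to be adapted to $\ell_i^{\mathrm{R}}$; and note that Lemma~\ref{lem:ord given other ord bad case} adapts the \emph{first} block of the concatenation to a fixed ordering of the second, so the pairing ``$X=M_i^{\mathrm{L}}$, $Y=M_{i-1}^{\mathrm{R}}$'' is in the wrong order (this part is repairable by reversing orderings, but the factor of $2$ is not).

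The repair is what the paper does. Since each half-block participates in exactly one junction, at the junction between $M_i$ and $M_{i+1}$ you may choose the orderings of $M_i^{\mathrm{R}}$ and $M_{i+1}^{\mathrm{L}}$ \emph{simultaneously}; Lemma~\ref{lem:2 edge colourable} does exactly this for two edge-disjoint matchings of common size $t$ and achieves $t-1$ rather than $\frac{t}{2}$. Taking $t=\lfloor\frac{m}{2c}\rfloor$, carving two blocks of size exactly $t$ out of each colour class (possible since $\lfloor\frac{m}{c}\rfloor\geq 2t$), ordering the leftover middle edges arbitrarily, and concatenating gives $\cms(H)\geq t-1$, which is the stated bound.
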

\begin{proof}
Let $H$ be a graph. Let $c$ be the chromatic index of $H$, $m=|E(H)|$ and $t = \lfloor\frac{m}{2c}\rfloor$. When $c=1$, $H$ is a matching and the result is trivial, so we can assume that $c\geq 2$. By Lemma~\ref{lem:colEq}, there is a matching decomposition $\{H_0,\ldots,H_{c-1}\}$ of $H$ such that $|E(H_i)| \geq \lfloor\frac{m}{c}\rfloor \geq 2t$ for each $i \in \mathbb{Z}_{c}$. For each $i \in \mathbb{Z}_{c}$, let $H_i'$ and $H_i''$ be vertex-disjoint subgraphs of $H_i$, each with $t$ edges, and note that by Lemma~\ref{lem:2 edge colourable} there are orderings $\ell_i''$ of $H_{i}''$ and $\ell_{i+1}'$
of $H_{i+1}'$ such that $\ms(\ell_{i}''\vee \ell_{i+1}') \geq t - 1$. For each $i \in \mathbb{Z}_{c}$, let $\ell_i=\ell'_i \vee \ell^*_i \vee \ell''_i$ be an ordering of $H_i$, where $\ell^*_i$ is an arbitrary (possibly empty) ordering of the edges in $E(H_i) \setminus E(H_i' \cup H_i'')$. Let $\ell=\ell_1 \vee \cdots \vee \ell_{c}$. We complete the proof by showing that $\cms(\ell) \geq t-1$.

Let $e,e'$ be adjacent edges in $H$. Obviously $d_{\ell}\{e,e'\} \geq t-1$ if it is not the case that both $e$ and $e'$ are in $H''_j \cup H'_{j+1}$ for some $j \in \Z_c$. But if both $e$ and $e'$ are in $H''_j \cup H'_{j+1}$, then $d_{\ell}\{e,e'\} \geq t-1$ because $\ms(\ell_{j}''\vee \ell_{j+1}') \geq t - 1$.
\end{proof}

For each $k \geq 2$, we define a graph $B_k$ with maximum degree $k$. If $k$ is even, let $B_k$ be a complete graph on $k+1$ vertices and, if $k$ is odd, let $B_k$ be the graph on $k+2$ vertices whose complement is the vertex-disjoint union of a path with $3$ vertices and a matching with $k-1$ vertices. In particular, the following depicts $B_3$.
\begin{center}
\begin{tikzpicture}[thick,scale= .8]
  \pgfmathsetlengthmacro\scfac{2.5cm}
  \pgfmathsetmacro{\sepang}{120}
  \pgfmathtruncatemacro{\c}{3}
\draw[line width = \scfac*0.02, color = blue]{

(0+90:\scfac) -- (90+90:\scfac)
(0+90:\scfac) --  (270+90:\scfac)

(90+90:\scfac) -- (180+90:\scfac)
[bend left] (90+90:\scfac) -- node[pos = 0.5,above]{}(270+90:\scfac)

(180+90:\scfac) --  (270+90:\scfac)

[bend left] (0+90:\scfac) to ($(0:\scfac)+(270+90:\scfac)$)
[bend right] (180+90:\scfac) to ($(0:\scfac)+(270+90:\scfac)$)
};
\draw[line width = \scfac*0.015, color = black!50]{
(0+90:\scfac) node[anchor=north,yshift=\scfac*0.3,color = black]{0} node[circle, draw, fill=black!10,inner sep=\scfac*0.015, minimum width=\scfac*0.08] {}
(90+90:\scfac) node[anchor=east,xshift=-\scfac*0.1,color = black]{2}node[circle, draw, fill=black!10,inner sep=\scfac*0.015, minimum width=\scfac*0.08] {}
(180+90:\scfac) node[anchor=north,yshift=-\scfac*0.1,color = black]{3} node[circle, draw, fill=black!10,inner sep=\scfac*0.015, minimum width=\scfac*0.08] {}
(270+90:\scfac) node[circle, draw, fill=black!10,inner sep=\scfac*0.015, minimum width=\scfac*0.08] {}
(270+90:\scfac) node[anchor=west,xshift=\scfac*0.1,color = black]{1}
(0:\scfac)+(270+90:\scfac) node[anchor=west,xshift=\scfac*0.1,color = black]{4}node[circle, draw, fill=black!10,inner sep=\scfac*0.015, minimum width=\scfac*0.08] {}
};
\end{tikzpicture}
\end{center}
It is easy to check that when $k$ is odd $B_k$ has $\frac{1}{2}(k^2+2k-1)$ edges
and each of its vertices has degree $k$ except for one that has degree $k-1$.
Of course, when $k$ is even $B_k$ has $\frac{1}{2}k(k+1)$ edges and each of its vertices has degree $k$.

We will complete the proof of Theorem~\ref{thm:gen bounds} by showing that a graph $H$ containing $B_k$ as a subgraph has cyclic matching sequenceability at most $\frac{1}{k+1}{|E(H)|}$. We will make use of the following facts about $B_k$ for odd integers $k$.

\begin{lem}\label{lem:prop of bad sg}
For each odd $k \geq 3 $, $\cms(B_k)\leq \frac{k-1}{2}$ and $B_k$ has matching number $\frac{k+1}{2}$.
\end{lem}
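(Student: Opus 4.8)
The plan is to compute the matching number of $B_k$ directly and then to obtain the bound on $\cms(B_k)$ by combining the matching-number computation with Lemma~\ref{lem:gen bound given subG}(i) applied with $G=H=B_k$.

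First I would determine the matching number. When $k$ is odd, $B_k$ has $k+2$ vertices, and since $k+2$ is odd, any matching misses at least one vertex, so the matching number is at most $\frac{k+1}{2}$. For the matching lower bound one exhibits a near-perfect matching: working from the description of $B_k$ as the complement of (a $3$-vertex path) $\cup$ (a matching on $k-1$ vertices), the $k-1$ vertices outside the path can be paired off in $\frac{k-1}{2}$ edges of $B_k$ (the complement of that matching contains all such pairs), and two of the three path-vertices can be joined by an edge of $B_k$ as well (the complement of a $3$-vertex path $uvw$ still contains the edge $uw$, since $uv$ and $vw$ are the only non-edges among those three vertices, and $B_k$ also joins each of $u,v,w$ to every one of the $k-1$ outside vertices). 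This gives a matching of size $\frac{k-1}{2}+1=\frac{k+1}{2}$, so the matching number of $B_k$ is exactly $\frac{k+1}{2}$.

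Next I would bound $\cms(B_k)$. Apply Lemma~\ref{lem:gen bound given subG}(i) with $H=B_k$ and $G=B_k$, so $\nu=\frac{k+1}{2}$ and $|E(G)|=|E(H)|=\frac{1}{2}(k^2+2k-1)$; this yields $\cms(B_k)\le \nu = \frac{k+1}{2}$, which is one more than the claimed bound. To shave off the extra unit one instead uses Lemma~\ref{lem:gen bound given subG}(ii) with $G=H=B_k$: there it gives $\cms(B_k)\le \frac{|E(B_k)|}{\lfloor\frac{1}{\nu}(|E(B_k)|-\cms(B_k))\rfloor+1}$, which is a self-referential inequality in $\cms(B_k)$, and solving it for the integer value $\cms(B_k)$ pins it down to at most $\frac{k-1}{2}$. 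Alternatively, and perhaps more cleanly, one can just observe that $B_k$ has $\frac{1}{2}(k^2+2k-1)$ edges while every matching has at most $\frac{k+1}{2}$ edges, so any cyclic ordering contains a block of $\frac{k+1}{2}$ cyclically consecutive edges that is not a matching is not immediate — instead, an averaging/counting argument on the positions of adjacent edges of $B_k$ in any ordering forces two adjacent edges within distance $\frac{k-1}{2}$; concretely, partition the cyclic order of the $\frac{1}{2}(k^2+2k-1)$ edges into blocks of size $\frac{k+1}{2}$ and note $\frac{1}{2}(k^2+2k-1)=\frac{k+1}{2}\cdot\frac{k-1}{1}+\cdots$ does not divide evenly, so a short block appears, and a block of $\frac{k+1}{2}$ edges on $k+2$ vertices cannot be a matching.

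I expect the main obstacle to be getting the "$-1$" sharpening, i.e.\ distinguishing $\frac{k-1}{2}$ from $\frac{k+1}{2}$: the crude matching-number bound only gives $\frac{k+1}{2}$, and the refinement genuinely uses the parity obstruction that $B_k$ has an odd number of vertices together with the precise edge count. The cleanest route is almost certainly Lemma~\ref{lem:gen bound given subG}(ii) with $G=B_k$ itself, since then $|E(G)|-\cms(G)$ in the floor is controlled by the trivial bound $\cms(G)\le\nu$ and the arithmetic $\lfloor\frac{1}{\nu}(|E(B_k)|-\nu)\rfloor+1$ works out to force the stated inequality; I would carry out this substitution and the (routine) arithmetic with $|E(B_k)|=\frac{1}{2}(k^2+2k-1)$ and $\nu=\frac{k+1}{2}$ to finish.
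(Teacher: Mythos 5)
Your computation of the matching number is essentially the paper's (odd order forces $\nu\le\frac{k+1}{2}$, and one exhibits a near-perfect matching), though note that your explicit construction breaks at $k=3$: there the $k-1=2$ vertices outside the path are exactly the pair matched in the complement, so they are \emph{not} adjacent in $B_3$ and cannot be "paired off"; one must instead match a path-vertex with an outside vertex. This is easily repaired.

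The real problem is the bound $\cms(B_k)\le\frac{k-1}{2}$, where your proposed route does not work. Applying Lemma~\ref{lem:gen bound given subG}(ii) with $G=H=B_k$ and feeding in $\cms(B_k)\le\nu=\frac{k+1}{2}$ gives $\lfloor\frac{1}{\nu}(|E(B_k)|-\nu)\rfloor+1=\lfloor k-\frac{2}{k+1}\rfloor+1=k$, hence only $\cms(B_k)\le\frac{k^2+2k-1}{2k}=\frac{k+1}{2}+\frac{k-1}{2k}$, whose integer part is again $\frac{k+1}{2}$; the self-referential inequality is satisfied by $\cms(B_k)=\frac{k+1}{2}$ and so "solving it" pins down nothing. (In the paper the logic runs the other way: Lemma~\ref{lem:gen bound given subG}(ii) is applied in Theorem~\ref{thm:count exa} \emph{after} $\cms(B_k)\le\frac{k-1}{2}$ is known, to get the $\frac{1}{k+1}|E(H)|$ bound.) Your fallback averaging sketch is also not an argument: a block of $\frac{k+1}{2}$ edges on $k+2$ vertices certainly \emph{can} be a matching, since that is exactly the matching number. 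The missing idea is to exploit the unique vertex $v$ of degree $k-1$. Supposing $\cms(\ell)=\frac{k+1}{2}$, the $k-1$ edges at $v$ cut the cyclic order into $k-1$ gaps summing to $|E(B_k)|=\frac{1}{2}\bigl((k-1)(k+3)+2\bigr)$, so some gap has length at least $\frac{k+5}{2}$ and hence contains $\frac{k+3}{2}$ consecutive edges avoiding $v$. The first and last $\frac{k+1}{2}$ of these would each be a matching covering $k+1$ vertices none of which is $v$, i.e.\ two perfect matchings of $V(B_k)\setminus\{v\}$ sharing all but one edge --- impossible, since distinct perfect matchings of the same vertex set differ in at least four edges. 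Without this (or an equivalent) argument the first assertion of the lemma is unproved.
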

\begin{proof}
Let $\nu$ be the matching number of $B_k$. It is easy to see that $\nu = \frac{k+1}{2}$, because $B_k$ has $k+2$ vertices, $k$ is odd, and it is easy to find a matching of size $\frac{k+1}{2}$ in $B_k$. So it remains to show that $\cms(B_k)\leq \frac{k-1}{2}$.
As the matching number of a graph $G$ is clearly an upper bound for $\cms(G)$, we only need to show that   $\cms(B_k)\neq \frac{k+1}{2}$.

Suppose for a contradiction that $\ell$ is an ordering of $B_k$ such that $\cms(\ell)= \frac{k+1}{2}$.
Let $v$ be the vertex of $B_k$ with degree $k-1$ and let $e_0 ,\ldots ,e_{k-2}$ be the edges of $B_k$ incident with $v$, where $\ell(e_i) < \ell (e_{i+1})$ for all $i \in \{0,\ldots ,k-3\}$.
Clearly $\sum_{i \in \Z_{k-1}}d_{\ell}(e_i,e_{i+1}) = |E(B_k)|$. So, for some $i \in \Z_{k-1}$ we have
\[
d_{\ell}(e_i,e_{i+1}) \geq \left\lceil\mfrac{|E(B_k)|}{k-1} \right\rceil =
 \left\lceil\mfrac{(k-1)(k+3)+2}{2(k-1)}\right\rceil = \mfrac{k+5}{2}.
\]
Therefore, in $\ell$, there are $\frac{k+3}{2}$ cyclically consecutive edges $e_{0}',\ldots, e_{(k+1)/2}'$
between $e_i$ and $e_{i+1}$, none of which are incident with $v$. As $\cms(\ell)= \frac{k+1}{2}$, we have that $e_{0}',\ldots, e'_{(k-1)/2}$ and $e_{1}',\ldots, e_{(k+1)/2}'$
must each form a matching of $B_k$. However, the two matchings so formed both have vertex set $V(H) \setminus \{v\}$, and they share $\frac{k-1}{2}$ edges. This is impossible, and we conclude that
$\cms(B_k) \leq  \frac{k-1}{2}$.
\end{proof}

\begin{thm}\label{thm:count exa}
Let $k \geq 2$ be an integer. Then $\cms(H) \leq \frac{1}{k+1}{|E(H)|}$ for any graph $H$ that has $B_k$ as a subgraph. Furthermore, for all integers $n \geq 3k+5$ such that $nk$ is even, there is
a $k$-regular graph on $n$ vertices with $B_k$ as a subgraph.
\end{thm}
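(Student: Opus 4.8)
The plan is to prove the two assertions in turn, splitting each according to the parity of $k$.

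\textbf{The upper bound.} For the inequality $\cms(H) \le \frac{1}{k+1}|E(H)|$ I would apply Lemma~\ref{lem:gen bound given subG} with $G = B_k$. When $k$ is even, $B_k = K_{k+1}$ has $\binom{k+1}{2} = \frac{k(k+1)}{2}$ edges and, since $k+1$ is odd, matching number $\frac{k}{2}$, so part~(i) already gives $\cms(H) \le \frac{(k/2)|E(H)|}{k(k+1)/2} = \frac{1}{k+1}|E(H)|$. When $k$ is odd, part~(i) is too weak, so I would use part~(ii): here $B_k$ has $\frac{k^2+2k-1}{2}$ edges, matching number $\nu = \frac{k+1}{2}$, and, by Lemma~\ref{lem:prop of bad sg}, $\cms(B_k) \le \frac{k-1}{2}$. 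Then $|E(B_k)| - \cms(B_k) \ge \frac{k(k+1)}{2}$, so $\frac{1}{\nu}(|E(B_k)| - \cms(B_k)) \ge k$, and since the floor in part~(ii) is non-decreasing in $|E(B_k)|-\cms(B_k)$ we may substitute the upper bound $\frac{k-1}{2}$ for $\cms(B_k)$ to conclude that the denominator in part~(ii) is at least $k+1$, which gives the claimed bound.

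\textbf{The construction.} The key point is that every vertex of $B_k$ has degree $k$ except, when $k$ is odd, a single vertex of degree $k-1$. So when $k$ is even I would take the disjoint union of $B_k = K_{k+1}$ with a $k$-regular graph on $n-k-1$ vertices; since $n \ge 3k+5$ forces $n-k-1 \ge 2k+4 \ge k+1$ and $(n-k-1)k$ is even ($k$ is even), such a graph exists. When $k$ is odd, the hypothesis that $nk$ is even forces $n$ even; I would take two disjoint copies of $B_k$ and join their degree-$(k-1)$ vertices by an edge to obtain a $k$-regular graph $D$ on $2k+4$ vertices, then take the disjoint union of $D$ with a $k$-regular graph on $n - 2k - 4$ vertices. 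Again $n \ge 3k+5$ forces $n-2k-4 \ge k+1$, and $n-2k-4$ is even, so this last graph exists. In either case the result is a $k$-regular graph on $n$ vertices containing $B_k$ as a subgraph.

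\textbf{What is being used, and what to watch.} Beyond Lemmas~\ref{lem:gen bound given subG} and~\ref{lem:prop of bad sg} and the edge count and degree sequence of $B_k$ recorded above, the only input is the standard fact that a $k$-regular graph on $m$ vertices exists whenever $m \ge k+1$ and $mk$ is even. All the arithmetic is routine; the one genuine subtlety is that for odd $k$ the upper bound really does need the sharper part~(ii) of Lemma~\ref{lem:gen bound given subG} together with $\cms(B_k) \le \frac{k-1}{2}$ — this is exactly why $B_k$ was defined as it was for odd $k$ rather than simply as $K_{k+1}$ or $K_{k+2}$. On the construction side, the threshold $n \ge 3k+5$ is precisely what is needed in the odd case to accommodate a nonempty $k$-regular filler graph (on at least $k+1$ vertices) alongside the rigid $(2k+4)$-vertex gadget $D$; in the even case it is more than enough.
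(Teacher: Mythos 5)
Your proposal is correct and follows essentially the same route as the paper: Lemma~\ref{lem:gen bound given subG}(i) for even $k$, part (ii) combined with Lemma~\ref{lem:prop of bad sg} for odd $k$, and the same disjoint-union construction (your gadget $D$ is exactly the paper's $B'_k$). The arithmetic in the odd case, including the monotonicity remark justifying the substitution of the bound $\cms(B_k)\le\frac{k-1}{2}$ into part (ii), checks out.
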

\begin{proof}
Let $\nu$ be the matching number of $B_k$. When $k$ is even, $|E(B_k)|=\frac{1}{2}k(k+1)$, $\nu = \frac{k}{2}$ and the result follows by Lemma~\ref{lem:gen bound given subG}(i). When $k$ is odd, $|E(B_k)|=\frac{1}{2}(k^2+2k-1)$ and, by Lemma~\ref{lem:prop of bad sg}, $\cms(B_k) \leq \frac{k-1}{2}$ and $\nu = \frac{k+1}{2}$. So $|E(B_k)|-\cms(B_k) \geq \frac{1}{2}k(k+1)$ and the result can be seen to follow from  Lemma~\ref{lem:gen bound given subG}(ii).

Finally we show a $k$-regular graph on $n$ vertices with $B_k$ as a subgraph exists for any $n \geq 3k+4$ such that $nk$ is even.
If $k$ is even, then for any $n \geq 2k+2$, a graph that is the vertex-disjoint union of $K_{k+1}$ and a $k$-regular graph on $n-(k+1)$ vertices is a $k$-regular graph on $n$ vertices with $B_k$ as a subgraph. If $k$ is odd, then let $B'_k$ be the $k$-regular graph on $2k+4$ vertices that is formed by taking two vertex-disjoint copies of $B_k$ and adding an edge incident with the vertex of degree $k-1$ in each copy. For any $n \geq 3k+5$, a graph that is the vertex-disjoint union of $B'_{k}$ and a $k$-regular graph on $n-(2k+4)$ vertices is a $k$-regular graph on $n$ vertices with $B_k$ as a subgraph.
\end{proof}

Note that, for any $k\geq 2$, a $k$-regular graph containing $B_k$ is necessarily class 2. To see this let $x=|V(B_k)|$ and note that $x$ is odd and hence any matching in $B_k$ has size at most $\frac{x-1}{2}$. But, for both $k$ odd and $k$ even, $|E(B_k)| > \frac{k(x-1)}{2}$ and hence $B_k$ does not have a $k$-edge colouring.

\begin{proof}[\textbf{\textup{Proof of Theorem~\ref{thm:gen bounds}.}}]
The upper and lower bounds on $\cms(G)$ are established in Lemmas~\ref{lem:fracChromatic} and \ref{lem:gen lower bound}, respectively. Let $\Delta \geq 2$ and $n \geq \Delta+1$ be integers. If $n=\Delta+1$, then $K_{n}$ satisfies $\cms(K_n) \leq \frac{1}{\Delta+1}|E(K_{n})|$ by the result of \cite{MR2961987} mentioned in the introduction. If $n \geq \Delta+2$, then there is clearly a graph $G$ of order $n$ with maximum degree $\Delta$ that contains $B_\Delta$ as a subgraph. Then $\cms(G) \leq \frac{1}{\Delta+1}|E(G)|$ by Theorem~\ref{thm:count exa}.
\end{proof}

\section{Ordering with a given partition}\label{givenPartition}

Let $H$ be a $k$-regular graph with $n$ vertices and chromatic index $c$. In order to establish the lower bound in Theorem~\ref{thm:k reg}, we will construct an ordering $\ell$ of $H$ via a two stage process. In the first stage we will find a partition $\{X_i,Y_i,Z_i: i \in \mathbb{Z}_{c}\}$ of $E(H)$. In the second stage we will, for each $i \in \mathbb{Z}_{c}$, find orderings $\ell_{X_i}$, $\ell_{Y_i}$, $\ell_{Z_i}$ of $X_i$, $Y_i$, $Z_i$, respectively, then take
\[\ell= \ell_{Z_0} \vee \ell_{Y_0} \vee \ell_{X_0} \vee \ell_{Z_1} \vee \ell_{Y_1} \vee \ell_{X_1} \vee \cdots\cdots \vee \ell_{Z_{c-1}} \vee \ell_{Y_{c-1}} \vee \ell_{X_{c-1}}\]
and show that $\ell$ has the required matching sequenceability. In this section we detail how to construct the ordering $\ell$ given a partition of $E(H)$ with certain desirable properties. In Section~\ref{findPartition} we will establish that a partition with such properties does indeed exist.

Let $X_i \subseteq E(H_i)$ for all $i \in \mathbb{Z}_c$.
For each $i \in \Z_c$, we say
that a vertex $v$ in $V(H)$ is {\em $i$-covered for $\{X_0,\ldots , X_{c-1}\}$}
if $v$ is adjacent to an edge in $X_{i}$ and either there is an edge in $X_{i+1}$
that is also adjacent to $v$ or no edge in $H_{i+1}$ is adjacent to $v$.
For a graph $H$ and nonnegative integers $x$ and $w$, we say that a partition $\{X_i,Y_i,Z_i: i \in \mathbb{Z}_{c}\}$ of $E(H)$ is a \emph{$(x,w)$-partition} of $H$ if it obeys the following conditions.
\begin{itemize}[itemsep=0mm]
    \item[(P1)]
$w \leq \frac{3}{2}x$ and $x+2y \leq \lfloor \frac{1}{c}|E(H)| \rfloor$ where $y = \lceil 3x-\frac{3}{2}w\rceil$.
    \item[(P2)]
$\{H_i: i \in \Z_{c}\}$ is an equitable matching decomposition of $H$, where $H_i$ is subgraph of $H$ with edge set $X_i \cup Y_i \cup Z_i$ for each $i \in \Z_c$.
    \item[(P3)]
$|X_i| = x$ and $|Y_i| = y$ for all $i \in \mathbb{Z}_{c}$.
    \item[(P4)]
No edge in $X_i$ is adjacent to an edge in $Z_{i+1}$ for all $i \in \mathbb{Z}_{c}$.
    \item[(P5)]
For all $i \in \mathbb{Z}_{c}$, $|Y'_i| \leq \frac{y}{3}$ and each edge in $Y'_i$ is adjacent to at most one edge in  $Z_{i+1}$, where $Y_i'$ is the set of edges in $Y_i$ that are adjacent to exactly two edges in $X_{i-1}$.
    \item[(P6)]
For all $i \in \mathbb{Z}_{c}$, there are at least $w$ vertices of $H$ that are $i$-covered for $\{X_0,\ldots,X_{c-1}\}$.
\end{itemize}

We treat an $(x,w)$-partition as including a specification of which of its sets plays the role of $X_i$, $Y_i$ and $Z_i$ for each $i \in \mathbb{Z}_{c}$.  We will refer to these properties simply as (P1), (P2), \ldots, (P6) throughout the rest of the section and in the next section. When an $(x,w)$-partition is defined we will use $y$ and $Y'_i$ in the roles they play in (P1) and (P5) without explicitly defining them each time. Note that $|E(H_i)| \in \{\lfloor \frac{1}{c}|E(H)| \rfloor,\lceil \frac{1}{c}|E(H)| \rceil\}$ for each $i \in \Z_c$ because $\{H_i: i \in \Z_{c}\}$ is an equitable matching decomposition of $H$ by (P2). Thus, it follows from (P1) -- (P3) that $|Z_i|=|E(H_i)|-x-y\geq y$ for each $i \in \Z_c$.

Our goal for the rest of the section is to establish Proposition~\ref{prop:seq given some conds} which guarantees a lower bound on the cyclic matching sequenceability of a graph that admits an $(x,w)$-partition. Our next results, Lemmas~\ref{lem:mid tran to top gen k} and \ref{lem:bot tran to mid and bot gen k}, are used only in the proof of Proposition~\ref{prop:seq given some conds}. In Lemma~\ref{lem:mid tran to top gen k}, we define orderings of $Y_i$ and $Z_i$ for each $i \in \Z_c$ and then, based on these, in Lemma~\ref{lem:bot tran to mid and bot gen k} we determine orderings of $X_i$ for each $i \in \Z_c$.

\begin{lem}\label{lem:mid tran to top gen k}
Let $H$ be a graph and let $\{X_i,Y_i,Z_i: i \in \mathbb{Z}_{c}\}$ be a $(x,w)$-partition of $H$. For all $i\in \mathbb{Z}_{c}$, there are orderings $\ell_{Y_i}$ and $\ell_{Z_i}$ of $Y_i$ and $Z_i$ so that $\ms(\ell_{Y_i} \vee \ell_{Z_{i+1}}) \geq y-1$ and, in $\ell_{Y_i}$, the edges in $Y_i'$ are the last to occur.
\end{lem}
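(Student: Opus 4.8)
The plan is to construct, for each individual $i \in \mathbb{Z}_c$, the orderings $\ell_{Y_i}$ of $Y_i$ and $\ell_{Z_{i+1}}$ of $Z_{i+1}$; since for distinct $i$ the pairs of edge sets $\{Y_i, Z_{i+1}\}$ are pairwise disjoint, doing this for every $i$ yields orderings $\ell_{Y_i}$ and $\ell_{Z_i}$ as required. Fix $i$ and write $y=|Y_i|$, $z=|Z_{i+1}|$ and $q=|Y_i'|$, noting that $z \geq y$ (by the remark following the definition of an $(x,w)$-partition) and $q \leq \tfrac{y}{3}$ by (P5). If $y=0$ the statement is trivial, so I assume $y \geq 1$, which forces $y'':=y-q \geq 1$.

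The main difficulty is that $Y_i$ and $Z_{i+1}$ need not have equal size (only $z \geq y$), so Lemma~\ref{lem:2 edge colourable} cannot be applied to them directly, and I must also force the edges of $Y_i'$ to come last in $\ell_{Y_i}$. To deal with this I would let $W$ be the set of edges of $Z_{i+1}$ adjacent to some edge of $Y_i'$; by (P5) each edge of $Y_i'$ has at most one neighbour in $Z_{i+1}$, so $|W| \leq q$. Setting $Y'':=Y_i \setminus Y_i'$ and $Z'':=Z_{i+1}\setminus W$, we get $|Y''|=y''$ and $|Z''|=z-|W|\geq z-q\geq y-q=y''$, so I can pick $Z^*\subseteq Z''$ with $|Z^*|=y''$ and apply Lemma~\ref{lem:2 edge colourable} to the equal-size matchings $Y''$ and $Z^*$ to get orderings $\mu$ of $Y''$ and $\sigma^*$ of $Z^*$ with $\cms(\mu\vee\sigma^*)\geq y''-1$. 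Taking $\sigma:=\sigma^*\vee\sigma^{**}$ for any ordering $\sigma^{**}$ of $Z''\setminus Z^*$, a short argument (via Lemma~\ref{lem:adding three orderings}, using that $Z_{i+1}$ is a matching and $\ms(\cdot)\geq\cms(\cdot)$) gives $\ms(\mu\vee\sigma)\geq y''-1$.

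It then remains to order $Y_i'$ and $W$ so as to control the adjacencies between them. I would fix an arbitrary ordering $\lambda=(a_1,\dots,a_q)$ of $Y_i'$ and, for each $g\in W$, set $\rho(g)$ equal to the largest $s$ with $a_s$ adjacent to $g$. The key observation is that (P5) makes $\rho\colon W\to\{1,\dots,q\}$ injective (two distinct edges of $W$ cannot share their largest-indexed neighbour among $a_1,\dots,a_q$), so $W$ can be ordered as $\tau=(g^{(0)},\dots,g^{(|W|-1)})$ with $\rho(g^{(0)})<\cdots<\rho(g^{(|W|-1)})$, and injectivity then forces $\rho(g^{(r)})\leq q-|W|+1+r$ for each $r$. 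Finally I would set $\ell_{Y_i}:=\mu\vee\lambda$ (so the edges of $Y_i'$ are indeed last) and $\ell_{Z_{i+1}}:=\sigma\vee\tau$.

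To finish, one verifies $\ms(\ell_{Y_i}\vee\ell_{Z_{i+1}})\geq y-1$ by bounding $d_\ell(e,e')$ for an arbitrary pair $e,e'$ that is adjacent in $H$ with $e$ before $e'$ in $\ell:=\ell_{Y_i}\vee\ell_{Z_{i+1}}$. Since $Y_i$ and $Z_{i+1}$ are matchings and all of $Y_i$ precedes all of $Z_{i+1}$, necessarily $e\in Y_i$ and $e'\in Z_{i+1}$, giving four cases. If $e\in Y''$ and $e'\in Z''$ then $d_\ell(e,e')=d_{\mu\vee\sigma}(e,e')+|\lambda|\geq(y''-1)+q=y-1$. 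If $e\in Y''$ and $e'\in W$ then $e'$ lies far enough after $e$ that $d_\ell(e,e')\geq y+1$. If $e\in Y_i'$ and $e'\in W$, say $e=a_s$ and $e'=g^{(r)}$, a direct computation using $r\geq\rho(g^{(r)})-q+|W|-1$, $\rho(g^{(r)})\geq s$ and $|W|+|Z''|=z$ gives $d_\ell(e,e')\geq z\geq y$. Finally $e\in Y_i'$ and $e'\in Z''$ is impossible, since every edge of $Z_{i+1}$ adjacent to an edge of $Y_i'$ lies in $W$. I expect the third case to be the main obstacle: this is exactly where (P5)'s injectivity of $\rho$ and the slack $y-y''=q$ together with $z\geq y$ are needed; it is also worth noting that the reason the weaker bound $y''-1$ from Lemma~\ref{lem:2 edge colourable} suffices in the first case is that the length-$y$ prefix $\ell_{Y_i}$ of $\ell$ contributes the missing $q$.
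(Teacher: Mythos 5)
Your construction is correct and follows essentially the same route as the paper: split off the edges of $Y_i'$ to the end of $\ell_{Y_i}$ and the $Z_{i+1}$-edges adjacent to $Y_i'$ to the end of $\ell_{Z_{i+1}}$, apply Lemma~\ref{lem:2 edge colourable} to equal-size pieces of the remainders, and use the "at most one neighbour in $Z_{i+1}$" clause of (P5) to get an injective pairing between $Y_i'$ and those last $Z$-edges. The only cosmetic differences are that the paper pads its set $Z_{i+1}'$ to size exactly $|Y_i'|$ and finishes by invoking Lemma~\ref{lem:adding 4 special orderings}, whereas you keep $W$ as exactly the adjacent edges and verify the four cases directly.
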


\begin{proof}
Fix an arbitrary $i\in \mathbb{Z}_{c}$. We will define orderings of $Y_i$ and $Z_{i+1}$.
As discussed above, (P1) -- (P3) imply that $|Z_i| \geq y$.
By (P5), $|Y_i'| \leq \frac{y}{3}$ and each edge of $Y_i'$ is adjacent to at most one edge in $Z_{i+1}$. Thus, we can choose a subset $Z_{i+1}'$ of $Z_{i+1}$ such that $|Z_{i+1}'|=|Y'_i|$ and $Z_{i+1}'$ includes every edge in $Z_{i+1}$ that is adjacent to an edge in $Y_i'$. Let $y'=|Y'_i|$. Choose an arbitrary ordering $\ell_{Y'_i}=(e_0,\ldots,e_{y'-1})$ of the edges in $Y'_i$. Because each edge of $Y_i'$ is adjacent to at most one edge in $Z_{i+1}$, we can now choose an ordering $\ell_{Z'_{i+1}}=(e^*_0,\ldots,e^*_{y'-1})$ of the edges in $Z_{i+1}'$ such that, for each $j \in \Z_{y'}$, either $e^*_j$ is not adjacent to any edge in $\ell_{Y'_i}$ or $e_j$ is the last of the (at most two) edges in $\ell_{Y'_i}$ adjacent to $e^*_j$. Clearly then,
\begin{equation}\label{eqn:topOrderings1}
\ms(\ell_{Y'_i} \vee \ell_{Z'_{i+1}}) \geq y'.
\end{equation}

Let $Y_i'' =Y_i\setminus Y_{i}' $, $Z_{i+1}'' = Z_{i+1}\setminus Z_{i+1}'$ and $y''=|Y_i''|$.
We have seen that (P1) -- (P3) imply $|Z_{i+1}|=|E(H_{i+1})|-x-y \geq y$ and thus, subtracting $y'$ from both sides, we have $|Z_{i+1}''| \geq y''$ and so we can find a subset $W$ of $Z_{i+1}''$
such that $|W|=y''$.
By Lemma~\ref{lem:2 edge colourable},  there are orderings $\ell_{Y''_i}$ and $\ell_{W}$ of the matchings formed by the edges of $Y_{i}''$ and the edges of $W$, respectively,
such that $\ms(\ell_{Y''_i} \vee \ell_{W}) \geq y''-1$.
Let $\ell_{Z''_{i+1}} = \ell_{W}\vee \ell_R$ where $\ell_R$ is an arbitrary ordering of the edges in $Z_{i+1}''\setminus W$.
Clearly then,
\begin{equation}\label{eqn:topOrderings2}
\ms(\ell_{Y''_i} \vee \ell_{Z''_{i+1}}) \geq y''-1.
\end{equation}
By the definition of $Z_{i+1}''$, no edge in it is adjacent to an edge in $Y'_i$. Thus, by Lemma~\ref{lem:adding 4 special orderings},
\[
\ms\left(\ell_{Y''_i}\vee \ell_{Y'_i} \vee \ell_{Z''_{i+1}} \vee \ell_{Z'_{i+1}}\right) \geq y'+y''-1 = y-1\,,
\]
where we have used the facts that $\ms(\ell_{Y''_i} \vee \ell_{Z''_{i+1}})+|Y'_i|\geq y''-1+y'$ by \eqref{eqn:topOrderings2}, that $\ms(\ell_{Y'_i} \vee \ell_{Z'_{i+1}})+|Z_{i+1}''| \geq y'+y''$ by \eqref{eqn:topOrderings1} and $|Y'_i|+|Z_{i+1}''| \geq y'+y''$.
So let $\ell_{Y_i} = \ell_{Y''_i} \vee \ell_{Y'_i}$ and $\ell_{Z_{i+1}} = \ell_{Z''_{i+1}}\vee \ell_{Z'_{i+1}}$, and note we have shown that these orderings satisfy $\ms(\ell_{Y_i} \vee \ell_{Z_{i+1}}) \geq y-1$. By applying this procedure for each $i \in \Z_{c}$, the lemma is proved.
\end{proof}

\begin{lem}\label{lem:bot tran to mid and bot gen k}
Let $H$ be a graph and let $\{X_i,Y_i,Z_i: i \in \mathbb{Z}_{c}\}$ be a $(x,w)$-partition of $H$. For all $i\in \mathbb{Z}_{c}$, let $\ell_{Y_i}$ and $\ell_{Z_i}$ be orderings of $Y_i$ and $Z_i$ that satisfy the conditions of Lemma~\ref{lem:mid tran to top gen k}. Then, for all $i \in \mathbb{Z}_{c}$, there is an ordering $\ell_{X_i}$ of $X_i$ such that $\ms(\ell_{X_i} \vee \ell_{Y_{i+1}}) \geq x$ and
$\ms(\ell_{X_i} \vee \ell_{X_{i+1}}) \geq x-y$.
\end{lem}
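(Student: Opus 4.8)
The plan is to build each $\ell_{X_i}$ from the ordering produced by Lemma~\ref{lem:ord given other ord} applied to the matchings $X_i$ and $Y_{i+1}$, using the freedom that construction leaves to order the edges of $X_i$ having no neighbour in $Y_{i+1}$. First, the hypothesis of Lemma~\ref{lem:ord given other ord} is met: the edges of $Y_{i+1}$ adjacent to exactly two edges of $X_i$ are the edges of $Y_{i+1}'$ (in the notation of (P5) with index $i+1$), and by the property of $\ell_{Y_{i+1}}$ provided by Lemma~\ref{lem:mid tran to top gen k} these occur last in $\ell_{Y_{i+1}}$. Second, the number of adjacent pairs $(f,e)$ with $f\in X_i$ and $e\in Y_{i+1}$ equals the number of vertices meeting an edge of $X_i$ and an edge of $Y_{i+1}$; every $i$-covered vertex meets an edge of $X_i$ but, as $H_{i+1}$ is a matching by (P2), no edge of $Y_{i+1}$, so by (P6) that number is at most $2x-w$ (there being only $2x$ vertices meeting an edge of $X_i$). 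Since $w\le\tfrac{3}{2}x$, condition (P1) gives $y=\lceil 3x-\tfrac{3}{2}w\rceil\ge 2x-w$, and hence Lemma~\ref{lem:ord given other ord} yields $\ms(\ell_{X_i}\vee\ell_{Y_{i+1}})\ge\min\{x,\,x+y-(2x-w)\}=x$, the first required inequality; in particular every edge of $X_i$ with a neighbour in $Y_{i+1}$ lies at position at most $y-1$ in $\ell_{X_i}$.

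For $\ms(\ell_{X_i}\vee\ell_{X_{i+1}})\ge x-y$, let $f\in X_i$, $g\in X_{i+1}$ be adjacent with $f$ before $g$, and let $u$ be a common vertex. If $f$ has a neighbour in $Y_{i+1}$, then $f$ lies at position at most $y-1$, so its distance from $g$ in $\ell_{X_i}\vee\ell_{X_{i+1}}$ is at least $x-(y-1)>x-y$. Otherwise $u$ meets $g\in X_{i+1}$, so by (P2) $u$ meets no edge of $Y_{i+1}$; and by (P4) the other endpoint of $f$ meets no edge of $Z_{i+1}$, so (as $f$ has no neighbour in $Y_{i+1}$) it meets an edge of $X_{i+1}$ or no edge of $H_{i+1}$. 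In either case both endpoints of $f$ are $i$-covered. This structural fact, combined with a suitable ordering of the edges of $X_i$ and $X_{i+1}$ having no neighbour in $Y_{i+1}$ and in $Y_{i+2}$ respectively---those occupying the tails of $\ell_{X_i}$ and $\ell_{X_{i+1}}$---forces $f$ to lie at a position at most $(\text{position of }g\text{ in }\ell_{X_{i+1}})+y$; the arithmetic reduces once more to the estimate $2x-w\le y$ above and to $x\le 2y$, both consequences of (P1).

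The step I expect to be the main obstacle is making these tail-orderings mutually consistent around the cycle, since the correct ordering for the tail of $\ell_{X_j}$ depends on the positions in the neighbouring block $\ell_{X_{j-1}}$. I would fix the $\ell_{X_j}$ one at a time in the cyclic order $\ell_{X_0},\dots,\ell_{X_{c-1}}$, so that when $\ell_{X_j}$ is constructed the block $\ell_{X_{j-1}}$ governing $\ms(\ell_{X_{j-1}}\vee\ell_{X_j})\ge x-y$ is already in place; the one inequality not covered in this way, $\ms(\ell_{X_{c-1}}\vee\ell_{X_0})\ge x-y$, must be arranged when $\ell_{X_{c-1}}$ is built last, which forces some tail-edges of $\ell_{X_{c-1}}$ early (from the first inequality and from $\ms(\ell_{X_{c-1}}\vee\ell_{X_0})$) and others late (from $\ms(\ell_{X_{c-2}}\vee\ell_{X_{c-1}})$). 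Reconciling these competing demands by an interval-scheduling argument on the tail of $\ell_{X_{c-1}}$ is the crux; I expect the slack from requiring only $x-y$ rather than $x$, together with $x\le 2y$ and the bound $2x-w\le y$ on the number of edges of $X_i$ meeting $Y_{i+1}$, to make this go through.
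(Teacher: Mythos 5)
Your handling of the first inequality is essentially the paper's: the count $y_1+2y_2\leq 2x-w\leq\frac{2}{3}y$ via (P6) and the definition of $y$, followed by Lemma~\ref{lem:ord given other ord}, is exactly right. The second inequality, however, is where your proposal has a genuine gap: you correctly reduce to the edges of $X_i$ with no neighbour in $Y_{i+1}$ (the ``tail''), but you never actually construct the tail orderings or prove $\ms(\ell_{X_i}\vee\ell_{X_{i+1}})\geq x-y$; you name the reconciliation of the competing constraints as ``the crux'' and express hope that an interval-scheduling argument will close it. Moreover, the sequential scheme you propose does not dissolve the circularity you are worried about: the constraint $\ms(\ell_{X_{j}}\vee\ell_{X_{j+1}})\geq x-y$ forces an edge $f\in X_j$ to sit at position at most $y+\mathrm{pos}(g)$ for each neighbour $g\in X_{j+1}$, so it is the \emph{earlier} block's tail that must adapt to the \emph{later} block's ordering --- fixing $\ell_{X_0},\dots,\ell_{X_{c-1}}$ in that order leaves you, at every step and not just the last, needing information about a block not yet built. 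The structural observation that both endpoints of such an $f$ are $i$-covered is correct but leads nowhere, since (P6) bounds the number of $i$-covered vertices from below, not above.

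The missing idea is a decoupling that makes the whole issue disappear. Split each $X_i$ into a head $X_i'$ of size exactly $\min\{x,\lfloor\frac{2y}{3}\rfloor\}$ containing all edges of $X_i$ adjacent to $Y_{i+1}$ (possible since that number is at most $2x-w\leq\frac{2y}{3}$) and a tail $X_i''=X_i\setminus X_i'$. Fix all heads first, each by Lemma~\ref{lem:ord given other ord} against $\ell_{Y_{i+1}}$ alone; then order each tail $X_i''$ against the \emph{already fixed} next head $\ell_{X_{i+1}'}$ using Lemma~\ref{lem:ord given other ord bad case} --- a lemma your proposal never invokes --- to get $\ms(\ell_{X_i''}\vee\ell_{X_{i+1}'})\geq\frac{1}{2}|X_i''|=\frac{1}{2}\bigl(x-\lfloor\frac{2y}{3}\rfloor\bigr)\geq x-y$, the last step using $y\geq\frac{3}{4}x$ from (P1). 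Adjacent pairs not of this type are then free: a pair with $e\in X_i'$ is separated by all of $X_i''$, and a pair with $e'\in X_{i+1}''$ is separated by all of $X_{i+1}'$, and both $|X_i''|$ and $|X_{i+1}'|$ are at least $x-y$. Each tail depends only on the next head, and the heads depend only on the $\ell_{Y_{i+1}}$, so nothing circular remains.
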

\begin{proof}
For each $i\in \mathbb{Z}_{c}$, we will find a subset $X'_i$ of $X_i$ such that $|X_i'| = \min\{x,\lfloor \frac{2y}{3} \rfloor\}$ and $X'_i$ includes every edge of $X_i$ that is adjacent to an edge in $Y'_{i+1}$, and then construct an ordering $\ell_{X'_i}$ of $X_i'$ for each $i\in \mathbb{Z}_{c}$. Once this is accomplished we will then find an ordering $\ell_{X''_i}$ of $X_i''=X_i \setminus X'_i$ for each $i\in \mathbb{Z}_{c}$, and show that the orderings $\ell_{X_i}=\ell_{X'_i}\vee \ell_{X''_i}$ satisfy the conditions of the lemma. Note that if $x \leq \lfloor \frac{2y}{3} \rfloor$, then the $X_i''$ will be empty and the ordering $\ell_{X''_i}$ will be trivial.

Let $j$ be an arbitrary element of $\mathbb{Z}_{c}$. Let $y_1$ be the number of edges of $Y_{j+1}$ adjacent to exactly one edge in $X_j$ and $y_2$ be the number of edges of $Y_{j+1}$ adjacent to exactly two edges in $X_j$, and note that $y_2=|Y_{j+1}'|$ by the definitions of $y_2$ and $|Y_{j+1}'|$. Consider the number of vertices incident with both an edge in $X_j$ and an edge in $Y_{j+1}$. Because $X_j$ is a matching, each edge in $Y_{j+1}$ is adjacent to at most two of its edges, and hence this number is $y_1+2y_2$. On the other hand, by (P6), this number is at most $2x-w$, where we note that $|X_j|=x$ and that the edges of $Y_{j+1} \cup X_{j+1}$ form a matching. It thus follows from  $y = \lceil 3x-\frac{3}{2}w\rceil$ that
\begin{equation}\label{eqn:y1y2Bound}
y_1+2y_2 \leq 2x-w \leq \mfrac{2y}{3}.
\end{equation}
Thus, we can choose a subset $X'_j$ of $X_j$ such that $|X_j'| = \min\{x,\lfloor \frac{2y}{3} \rfloor\}$ and $X'_j$ includes every edge of $X_j$ that is adjacent to an edge in $Y_{j+1}$ (if $x \leq \lfloor \frac{2y}{3} \rfloor$ then we choose $X'_j=X_j$). Further, because the last edges of $\ell_{Y_{j+1}}$ are those in $Y_{j+1}'$ and $y \geq y_1+2y_2$ by \eqref{eqn:y1y2Bound}, we can apply Lemma~\ref{lem:ord given other ord} to obtain an ordering $\ell_{X'_j}$ of $X_j'$ such that
\begin{equation}\label{eqn:bottomOrderings1}
\ms(\ell_{X'_j} \vee \ell_{Y_{j+1}}) \geq |X'_j|.
\end{equation}

Thus, for each $i\in \mathbb{Z}_{c}$, we can take such an ordering $\ell_{X'_i}$ of $X_i'$, let $\ell_{X_i}=\ell_{X'_i}$ if $x \leq \lfloor \frac{2y}{3} \rfloor$ and let $X''_i=X_i \setminus X'_i$ otherwise. If $x \leq \lfloor \frac{2y}{3} \rfloor$ then this completes the proof of the lemma, using \eqref{eqn:bottomOrderings1} and the fact that $x \leq y$. Thus, we may assume that $x > \lfloor \frac{2y}{3} \rfloor$. For each $i\in \mathbb{Z}_{c}$ by Lemma~\ref{lem:ord given other ord bad case} there is an ordering $\ell_{X''_i}$ of $X''_i$ such that
\begin{equation}\label{eqn:bottomOrderings2}
\ms\left(\ell_{X''_i} \vee \ell_{X'_{i+1}}\right) \geq \tfrac{1}{2}|X_i''| = \tfrac{1}{2}\left(x-\big\lfloor\tfrac{2y}{3}\big\rfloor\right) \geq x-y
\end{equation}
where the last inequality follows because $y \geq \frac{3}{4}x$ since $y = \lceil 3x-\frac{3}{2}w\rceil$ and $w \leq \frac{3}{2}x$ by (P1).

Again, let $j$ be an arbitrary element of $\mathbb{Z}_{c}$. As no edge in $X_j''$ is adjacent to an edge in $Y_{j+1}$, we have from \eqref{eqn:bottomOrderings1} that
\[\ms(\ell_{X'_j}\vee \ell_{X''_j} \vee \ell_{Y_{j+1}}) \geq |X'_j|+|X''_j| = x.\]
Obviously, $|X_j''| = x- \lfloor\frac{2y}{3}\rfloor \geq x-y$ and, because $y \geq \frac{3x}{4}$ by (P1), we have $|X_{j+1}'| = \lfloor\frac{2y}{3} \rfloor\geq x-y$. Therefore, by \eqref{eqn:bottomOrderings2},
\[\ms\left(\ell_{X'_j}\vee \ell_{X''_j} \vee \ell_{X'_{j+1}}\vee \ell_{X''_{j+1}}\right) \geq x-y.\]
Thus, the orderings $\ell_{X_i}=\ell_{X'_i}\vee \ell_{X''_i}$ for $i\in \mathbb{Z}_{c}$ satisfy the required properties.
\end{proof}

\begin{prop}\label{prop:seq given some conds}
If $H$ is a graph that has a $(x,w)$-partition for some nonnegative integers $x$ and $w$, then $\cms(H) \geq x+y-1$.
\end{prop}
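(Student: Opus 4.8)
The plan is to assemble the ordering $\ell = \bigvee_{i \in \Z_c}(\ell_{Z_i} \vee \ell_{Y_i} \vee \ell_{X_i})$ described at the start of the section, using the orderings $\ell_{Y_i}, \ell_{Z_i}$ produced by Lemma~\ref{lem:mid tran to top gen k} and the orderings $\ell_{X_i}$ produced by Lemma~\ref{lem:bot tran to mid and bot gen k}. The goal is to show $\cms(\ell) \geq x+y-1$, which by the definition of $\cms$ amounts to checking that every pair $\{e,e'\}$ of edges adjacent in $H$ has $d_\ell\{e,e'\} \geq x+y-1$.

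First I would set up the bookkeeping. Since $\{H_i : i \in \Z_c\}$ is an equitable matching decomposition by (P2), two adjacent edges $e,e'$ must lie in $H_i \cup H_{i+1}$ for some $i \in \Z_c$ (as consecutive blocks in the cyclic concatenation, $H_{i} = Z_i \cup Y_i \cup X_i$ is immediately followed by $H_{i+1}$). So it suffices to bound the distance within each "window" $\ell_{Z_i} \vee \ell_{Y_i} \vee \ell_{X_i} \vee \ell_{Z_{i+1}} \vee \ell_{Y_{i+1}} \vee \ell_{X_{i+1}}$. I would use Proposition~\ref{prop: Matching decomposition} in spirit, but here the blocks $H_i$ are themselves split into three pieces, so the cleaner route is to directly estimate $\ms$ of the relevant concatenations of sub-blocks and invoke Lemma~\ref{lem:adding three orderings} (and possibly Lemma~\ref{lem:adding 4 special orderings}) to combine them. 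The edges $e,e'$ adjacent in $H$ with $e$ earlier can lie in various pairs of sub-blocks: within $H_i$ (i.e.\ among $Z_i,Y_i,X_i$), within $H_{i+1}$, or straddling with $e \in \{Z_i,Y_i,X_i\}$ and $e' \in \{Z_{i+1},Y_{i+1},X_{i+1}\}$. The key building blocks are: $\ms(\ell_{Y_i} \vee \ell_{Z_{i+1}}) \geq y-1$ and that $Y_i'$ occurs last in $\ell_{Y_i}$ (Lemma~\ref{lem:mid tran to top gen k}); $\ms(\ell_{X_i} \vee \ell_{Y_{i+1}}) \geq x$ and $\ms(\ell_{X_i} \vee \ell_{X_{i+1}}) \geq x-y$ (Lemma~\ref{lem:bot tran to mid and bot gen k}); and (P4), which says no edge of $X_i$ is adjacent to any edge of $Z_{i+1}$, so that case contributes nothing.

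The main case to track carefully is $e \in X_i$, $e' \in Y_{i+1}$ or $e' \in X_{i+1}$, where the full block $Z_{i+1}$ (of size $|Z_{i+1}| \geq y$) sits between $X_i$ and $Y_{i+1}$ in $\ell$, and $Z_{i+1} \cup Y_{i+1}$ sits between $X_i$ and $X_{i+1}$. For the $X_i$--$Y_{i+1}$ pair, $d_\ell(e,e') \geq |Z_{i+1}| + \ms(\ell_{X_i} \vee \ell_{Y_{i+1}}) \geq y + x$ by Lemma~\ref{lem:adding three orderings} applied with $Y = Z_{i+1}$ (here I would be slightly careful: $\ell_{X_i} \vee \ell_{Z_{i+1}} \vee \ell_{Y_{i+1}}$ is the relevant window and (P4) kills the $X_i$-to-$Z_{i+1}$ adjacencies). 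For the $X_i$--$X_{i+1}$ pair, $d_\ell(e,e') \geq |Z_{i+1}| + |Y_{i+1}| + \ms(\ell_{X_i}\vee \ell_{X_{i+1}}) \geq y + y + (x-y) = x+y$. For pairs inside $H_{i+1}$, e.g.\ $e \in Y_{i+1}$, $e' \in X_{i+1}$: these are nonadjacent iff... actually one must use that $Y_{i+1} \cup X_{i+1}$ forms a matching (from (P2)/(P3) since $H_{i+1}$ is a matching), so no such adjacent pair exists; similarly within $Z_{i+1} \cup Y_{i+1} \cup X_{i+1} = H_{i+1}$, which is a matching, there are no adjacent pairs at all. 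So the only straddling pairs that matter are $Z_i$--(something in $H_{i+1}$), $Y_i$--(something in $H_{i+1}$), $X_i$--(something in $H_{i+1}$), and within each of these I peel off the intervening full blocks and apply the $\ms$ bound for the outer two sub-blocks, checking in each subcase that the total is at least $x+y-1$. For instance $e \in Y_i$, $e' \in Z_{i+1}$ gives $d_\ell(e,e') \geq |X_i| + \ms(\ell_{Y_i}\vee\ell_{Z_{i+1}}) \geq x + y - 1$, which is exactly the bound claimed and explains the $-1$.

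I expect the bookkeeping of which pairs of sub-blocks can contain adjacent edges, and verifying each yields $\geq x+y-1$, to be the bulk of the work rather than any deep difficulty — all the real content is already packaged in Lemmas~\ref{lem:mid tran to top gen k} and \ref{lem:bot tran to mid and bot gen k}. The one genuinely delicate point is making sure that when I insert an intermediate full block $B$ between two sub-blocks and write $d_\ell \geq |B| + \ms(\text{outer pair})$, I am entitled to ignore adjacencies between the outer sub-blocks and $B$: this is fine for $B = Z_{i+1}$ against $X_i$ by (P4), and for $B = Y_{i+1}$ against $X_i$ the point is that the relevant distance bound $\ms(\ell_{X_i} \vee \ell_{Y_{i+1}}) \geq x$ already accounts for $Y_{i+1}$-edges, so I should instead bound the $X_i$--$X_{i+1}$ distance using Lemma~\ref{lem:adding three orderings} with the middle graph $Z_{i+1}\cup Y_{i+1}$ and the pair $(\ell_{X_i},\ell_{X_{i+1}})$ directly. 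Once those few applications are stated cleanly, comparing each resulting quantity to $x+y-1$ using $x \leq y$ (which holds since $y = \lceil 3x - \tfrac{3}{2}w\rceil \geq \tfrac{3}{4}x$... and in fact $y \geq x$ when $w \leq \tfrac{4}{3}x$; more carefully $y \geq \tfrac34 x$ always by (P1), and one checks the needed inequalities hold regardless) finishes the proof.
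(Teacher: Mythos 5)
Your proposal is correct and follows essentially the same route as the paper: the same concatenated ordering $\ell_{Z_i}\vee\ell_{Y_i}\vee\ell_{X_i}$, the same three key bounds from Lemmas~\ref{lem:mid tran to top gen k} and~\ref{lem:bot tran to mid and bot gen k}, the use of (P4) and $|Z_{i+1}|\geq y$, and the same tight case $Y_i$--$Z_{i+1}$ giving exactly $x+y-1$; the paper merely packages your explicit sub-block case analysis through Lemmas~\ref{lem:adding three orderings} and~\ref{lem:adding 4 special orderings} (applied with $M_1=Z_i\cup Y_i$, $M_2=X_i$, $M_3=Z_{i+1}$, $M_4=Y_{i+1}\cup X_{i+1}$). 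Two small polish points: the reduction to consecutive blocks should be phrased as in Proposition~\ref{prop: Matching decomposition} (edges at distance less than $x+y-1$ lie in consecutive blocks, not that adjacent edges do), and $x\leq y$ is indeed not needed anywhere, so you can drop that hedge.
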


\begin{proof}
Let $H$ be a graph and let $\{X_i,Y_i,Z_i: i \in \mathbb{Z}_{c}\}$ be a $(x,w)$-partition of $H$. By Lemmas~\ref{lem:mid tran to top gen k} and~\ref{lem:bot tran to mid and bot gen k} there are, for each $i \in \Z_{c}$, orderings $\ell_{X_i}, \ell_{Y_i}, \ell_{Z_i}$ of $X_i, Y_i, Z_i$, respectively, such that $\ms(\ell_{Y_i} \vee \ell_{Z_{i+1}}) \geq y-1$, $\ms(\ell_{X_i} \vee \ell_{Y_{i+1}}) \geq x$ and $\ms(\ell_{X_i} \vee \ell_{X_{i+1}}) \geq x-y$. Let $\ell_i = \ell_{Z_i} \vee \ell_{Y_i} \vee \ell_{X_i}$ for each $i \in \Z_{c}$. Now let $i$ be an arbitrary element of $\Z_{c}$. By Proposition~\ref{prop: Matching decomposition}, it suffices to show that $\ms(\ell_{i} \vee \ell_{i+1}) \geq x+y-1$.

We have $\ms(\ell_{Y_i} \vee \ell_{Z_{i+1}}) \geq y-1$. So, because $|Y_i| =y$ and $ Y_i \cup Z_i$ is a matching, we have
\begin{equation}\label{eqn:allOrderings1}
\ms(\ell_{Z_i} \vee \ell_{Y_i} \vee\ell_{Z_{i+1}} ) \geq y-1.
\end{equation}
We also have $\ms(\ell_{X_i} \vee \ell_{Y_{i+1}}) \geq x$ and $\ms(\ell_{X_i} \vee \ell_{X_{i+1}}) \geq x-y$.
Thus, because $\ms(\ell_{Y_{i+1}} \vee \ell_{X_{i+1}}) =y+x$, Lemma~\ref{lem:adding three orderings} implies that
\begin{equation}\label{eqn:allOrderings2}
\ms(\ell_{X_i} \vee \ell_{Y_{i+1}} \vee \ell_{X_{i+1}}) \geq \min\{x,x+y,y+(x-y) \} =x.
\end{equation}
By  (P4), the edges of $X_{i} \cup Z_{i+1}$ form a matching. Thus, applying Lemma~\ref{lem:adding 4 special orderings} with $M_1=Z_i \cup Y_i$, $M_2=X_{i}$, $M_3=Z_{i+1}$, $M_4=Y_{i+1} \cup X_{i+1}$, and using \eqref{eqn:allOrderings1} and \eqref{eqn:allOrderings2}, we have
\[
\ms(\ell_{i} \vee \ell_{i+1})
\geq
\min\{y-1+x,x+|Z_{i+1}|,\ms(\ell_{Z_i} \vee \ell_{Y_i} \vee \ell_{Y_{i+1}} \vee \ell_{X_{i+1}})+x+|Z_{i+1}| \} = x+y-1,
\]
where the last inequality holds because $|Z_{i+1}| \geq y$ which we have seen follows from (P1) -- (P3).
\end{proof}

\section{Finding a good partition}\label{findPartition}
In this section we prove Theorem~\ref{thm:k reg} by
establishing the existence of $(x,w)$-partitions in $k$-regular graphs with $k\geq 3$.
Let $H$ be a $k$-regular graph and $\{H_0,\ldots,H_{c-1}\}$
be an equitable matching decomposition of $H$.
Then, we call $\{X_0,\ldots, X_{c-1} \}$ an {\em $(x,w)$-semipartition with respect to $\{H_0,\ldots,H_{c-1}\}$}
if $X_i \subseteq E(H_i)$, $|X_i|=x$ for each $i \in \Z_c$ and $\{X_0,\ldots, X_{c-1} \}$ obeys (P6) for $w$. Our strategy is to first establish that it is possible to extend an $(x,w)$-semipartition
to an $(x,w)$-partition in Lemma~\ref{lem:can make T with S}, then to exhibit $(x,w)$-semipartitions
(using several different methods) in Lemmas~\ref{lem:gen blob construction}--\ref{lem:prob cons}.
In Lemmas~\ref{thm:main thm explicit con} and \ref{thm:prob con}, we then prove the lower bounds of Theorem~\ref{thm:k reg}, using Proposition~\ref{prop:seq given some conds}. Finally, we prove Theorem~\ref{thm:k reg}.

\begin{lem}\label{lem:can make T with S}
Let $k \geq 3$ be an integer, let $H$ be a $k$-regular graph with $n$ vertices, and let $\{H_0,\ldots,H_{c-1}\}$ be an equitable matching decomposition of $H$.
Let $\{X_0,\ldots, X_{c-1}\}$ be an $(x,w)$-semipartition.
If $x$ and $w$ satisfy \textup{(P1)}, then there exists
an $(x,w)$-partition of $H$.
\end{lem}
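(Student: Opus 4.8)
The plan is to leave the sets $X_i$ untouched and, for each $i \in \mathbb{Z}_c$, to carve $E(H_i) \setminus X_i$ into sets $Y_i$ and $Z_i$ with $|Y_i| = y$, setting $Z_i = E(H_i) \setminus (X_i \cup Y_i)$; properties (P1), (P2), (P3) and (P6) then come for free (the last from the semipartition hypothesis, the rest from (P1) and the fact that $\{H_i\}$ is an equitable matching decomposition), so the work is entirely in arranging (P4) and (P5). The point is that (P4) essentially \emph{dictates} part of each $Y_i$: if $F_i$ denotes the set of edges of $E(H_i)\setminus X_i$ adjacent to some edge of $X_{i-1}$, then every edge of $F_i$ must be placed in $Y_i$, for otherwise it would sit in $Z_i$ and violate (P4); conversely, once $F_i \subseteq Y_i$ for every $i$, property (P4) holds. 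So the task reduces to checking that the mandatory edges, together with a few further edges needed to enforce (P5), can all be fitted into a $Y_i$ of size $y$.

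The first key step is to bound $|F_i|$ using (P6). Since $H_i$ is a matching, each edge of $F_i$ has an endpoint among the vertices that are adjacent to $X_{i-1}$ but not $(i-1)$-covered for $\{X_0,\dots,X_{c-1}\}$ (equivalently, adjacent to $X_{i-1}$ and to an edge of $H_i \setminus X_i$), and distinct edges of $F_i$ use distinct such vertices; as there are at most $2x-w$ of these vertices by (P6), we get $|F_i| \le 2x - w$. Moreover every edge of $E(H_i)\setminus X_i$ adjacent to exactly two edges of $X_{i-1}$ lies in $F_i$ and uses two of these vertices, so the set $\widehat Y_i$ of such edges --- which, once $F_i \subseteq Y_i$, will be precisely the set $Y_i'$ of (P5) --- has size at most $x - \tfrac w2$. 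Since (P1) gives $y = \lceil 3x - \tfrac32 w\rceil \ge 3(x - \tfrac w2)$, we get $|\widehat Y_i| \le \tfrac y3$ (settling the first half of (P5)) and, crucially, $|F_i| + |\widehat Y_{i-1}| \le 2(x - \tfrac w2) + (x - \tfrac w2) \le y$.

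The second key step handles the remaining clause of (P5): for each $i$ and each $e \in \widehat Y_{i-1}$ that has two neighbours in $H_i$, both of them outside $X_i \cup F_i$, we designate one of these two neighbours as a demand edge for $Y_i$ and force it into $Y_i$. There are at most $|\widehat Y_{i-1}|$ demand edges for $Y_i$, so by the inequality above the mandatory and demand edges together number at most $y$; we then pad $Y_i$ up to size exactly $y$ with arbitrary further edges of $E(H_i)\setminus X_i$, which is possible since $|E(H_i)| \ge x+2y$ by (P1) and equitability. With $Z_i = E(H_i)\setminus(X_i \cup Y_i)$ it remains to verify (P3)--(P5): (P3) and (P4) are immediate, and for (P5) one uses that $Y_i' = \widehat Y_i$ (every edge adjacent to two edges of $X_{i-1}$ already lies in $F_i \subseteq Y_i$, and no demand or padding edge is adjacent to any edge of $X_{i-1}$, hence none lies in $Y_i'$), together with the observation that any $e \in Y_i'$ with two neighbours in $H_{i+1}$ has one of them in $X_{i+1}\cup Y_{i+1}$ --- directly, or because $e$ generated a demand edge for $Y_{i+1}$ --- so $e$ meets $Z_{i+1}$ in at most one edge.

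I expect the only genuine obstacle to be the bookkeeping: one must make sure that the two independent ``must contain'' requirements on each $Y_i$ (the forced edges $F_i$ from (P4), and the demand edges arising from (P5) applied to the previous colour class) do not overflow the budget $y$, and the inequality that rescues this is exactly the content of (P1) filtered through the $2x-w$ estimate coming from (P6). A minor point worth a line is that padding is always possible and never disturbs (P4) or (P5), since enlarging $Y_i$ only shrinks $Z_i$ and padding edges are never adjacent to $X_{i-1}$.
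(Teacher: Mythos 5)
Your proof is correct and takes essentially the same route as the paper's: your $F_i$, $\widehat{Y}_i$ and demand edges are exactly the paper's $T_i \cup T_i'$, $T_i'$ and $T_i''$, and your budget inequality $|F_i| + |\widehat{Y}_{i-1}| \le 3x - \tfrac{3}{2}w \le y$ rests on the same vertex count $w + |T_i| + 2|T_i'| \le 2x$ derived from (P6).
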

\begin{proof}
For $i \in \Z_c$, let $T_i$ be the set of edges in $E(H_i)\setminus X_i$ that are adjacent to exactly one edge in $X_{i-1}$ and let $T'_i$ be the edges in $E(H_i)\setminus X_i$ that are adjacent to exactly two edges in
$X_{i-1}$.
There are $2x$ vertices that are incident with an edge in $X_{i-1}$. Of these $2x$ vertices, $|T_i|$ are incident with an edge in $T_i$, $2|T'_i|$ are incident with an edge in $T'_i$, and by (P6) at least $w$
are incident with an edge in $X_{i}$ or have no edge of $H_i$ incident with them. Thus,
\begin{equation}\label{eqn:deg count}
w+|T_i|+2|T'_i| \leq 2x \,.
\end{equation}

For all $i \in\mathbb{Z}_{c}$, we construct sets $T''_{i+1}$ with the following properties.
\begin{itemize}
\item[(i)]
The set $T''_{i+1}$ is a subset of $E(H_{i+1})\setminus (X_{i+1} \cup T_{i+1}\cup T'_{i+1})$.
\item[(ii)]
For each edge in $T_i'$ that is adjacent to two edges in $E(H_{i+1})\setminus (X_{i+1} \cup T_{i+1}\cup T'_{i+1})$, at least one of these latter two edges is in $T''_{i+1}$.
\end{itemize}
Let $j$ be an arbitrary element of $\mathbb{Z}_{c}$. For each $e \in T_j'$ that is adjacent to two edges in $E(H_{j+1})\setminus (X_{j+1} \cup T_{j+1}\cup T'_{j+1})$, choose one of these adjacent edges, and let $T''_{j+1}$ be the set of all these chosen edges. Then clearly $T''_{j+1}$ has the desired properties and $|T''_{j+1}| \leq |T'_{j}|$. By \eqref{eqn:deg count}, $|T'_j| \leq x-\frac{w}{2}$. Thus, $|T''_{j+1}| \leq x-\frac{w}{2}$ and hence by (\ref{eqn:deg count})
\begin{equation}\label{eqn:num edges mid sec}
|T_{j}\cup T_{j}'\cup T_j''| = |T_j|+|T'_j|+|T''_j| \leq 2x-w+x-\tfrac{1}{2}w = 3x-\tfrac{3}{2}w \leq y.
\end{equation}
Now we let $Y_i \supseteq T_{i}\cup T_{i}'\cup T_i''$ be a $y$-subset of $E(H_i)\setminus X_i$  and
$Z_i = E(H_i)\setminus (X_i\cup Y_i)$ for all $i \in \mathbb{Z}_{c}$ (such a $Y_i$ exists because $x$ and $w$ obey (P1) and so we have $|E(H_i) \setminus X_i| \geq \lfloor \frac{1}{c}E(H)\rfloor-x \geq 2y$). We complete the proof by
showing that
$\{X_i,Y_i,Z_i: i \in \mathbb{Z}_{c}\}$ is an $(x,w)$-partition.
By our hypotheses, (P1) is satisfied, and (P2) and (P3) are immediate from the above construction. Because $Y_{j+1} \supseteq T_{j+1} \cup T_{j+1}'$, each edge of $E(H_{j+1})$ that is adjacent to an edge in $X_j$ is in $X_{j+1} \cup Y_{j+1}$. Thus, no edge of $X_j$ is adjacent to an edge in $Z_{j+1}$, as required for (P4). The set of edges in $Y_{j}$ that are adjacent to two edges in $X_{j-1}$ is $T_j'$ and it follows from (\ref{eqn:deg count}) and the definition of $y$
that $|T_j'| \leq x-\frac{w}{2} \leq \frac{y}{3}$. Also, by (ii) and because $Y_{j+1} \supseteq T_{j+1}''$, each edge of $T_j'$ is adjacent to at least one edge in $X_{j+1} \cup Y_{j+1}$ and hence is adjacent to  at most one edge in $Z_{j+1}$. Thus, (P5) holds. Because $\{X_0,\ldots, X_{c-1}\}$ is a $(x,w)$-semipartition, (P6) is satisfied.
\end{proof}

We now find $(x,w)$-semipartitions using two different approaches. The first is constructive and works better for small values of $k$. We detail it for class 1 graphs in Lemma~\ref{lem:gen blob construction} and for class 2 graphs in Lemma~\ref{lem:gen blob construction C2}. Our second approach is probabilistic and works better for large values of $k$. We detail it in Lemma~\ref{lem:prob cons}.

For the remainder of the section, it will be convenient to extend our existing notation slightly. Let $\{H_0,\ldots,H_{c-1}\}$ be an equitable matching decomposition of a graph $H$ and
let $X$ be a subset of $E(H)$. For $v \in V(H)$ and $i \in \Z_c$ we say that $v$ is {\em $i$-covered for $X$} if $v$ is $i$-covered for $\{X \cap E(H_0),\ldots,X \cap E(H_{c-1})\}$. That is, $v$ is $i$-covered for $X$ if $v$ is adjacent to an edge in $X \cap H_i$ and either there is an edge in $X \cap H_{i+1}$ that is also adjacent to $v$ or no edge in $H_{i+1}$ is adjacent to $v$. Also, for a graph $G$ and a subset $S$ of $V(G)$ we use $G[S]$ to denote the subgraph of $G$ induced by $S$.

\begin{lem}\label{lem:gen blob construction}
Let $k \geq 3$ be an integer. Let $H$ be a $k$-regular class $1$ graph with $n$ vertices, and let $\{H_0,\ldots,H_{k-1}\}$ be an equitable matching decomposition of $H$. Then for any $x \leq \frac{n}{2}$ there is an $(x,w)$-semipartition of $H$
with
$w = x+\lfloor\frac{x-1}{k-1}\rfloor$.
\end{lem}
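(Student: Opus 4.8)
Since $H$ is class $1$ and $k$-regular, its chromatic index is $k$, so each $H_i$ is a perfect matching of $V(H)$ and in particular every vertex of $H$ is incident with an edge of $H_{i+1}$ for all $i$. Hence a vertex $v$ is $i$-covered for $\{X_0,\ldots,X_{k-1}\}$ precisely when $v$ is incident with an edge of $X_i$ and an edge of $X_{i+1}$. The plan is therefore to produce, for each $i \in \Z_k$, a matching $X_i \subseteq E(H_i)$ with $|X_i| = x$, together with one common set $C$ of $w$ vertices that is covered by every $X_i$; then $C \subseteq V(X_i) \cap V(X_{i+1})$ for each $i$, so at least $w$ vertices are $i$-covered and $\{X_0,\ldots,X_{k-1}\}$ is an $(x,w)$-semipartition. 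Writing $p_0 = \lfloor\frac{x-1}{k-1}\rfloor$, so that $w = x + p_0$ and $(k-1)p_0 \le x-1$, the key observation is that, given any set $C$ with $|C| = w$, there is a matching $X_i \subseteq E(H_i)$ with $|X_i| = x$ and $V(X_i) \supseteq C$ provided $H_i[C]$ has at least $p_0$ edges: since $H_i[C]$ is a matching, covering $C$ by edges of $H_i$ uses exactly $|C| - e(H_i[C])$ edges (the edges of $H_i[C]$ together with one pendant edge of $H_i$ at each uncovered vertex of $C$), which is at most $x = w - p_0$ exactly when $e(H_i[C]) \ge p_0$, and one may then pad $X_i$ up to size $x$ with further edges of $H_i$ because $x \le \frac{n}{2} = |E(H_i)|$.

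Thus it remains to find a set $C \subseteq V(H)$ with $|C| = w$ such that $H_i[C]$ has at least $p_0$ edges for every $i \in \Z_k$ (if the construction uses fewer than $w$ vertices we pad $C$ arbitrarily). I would build $C$ in two stages. In the first stage I exploit the two colours $0$ and $1$: the graph $H_0 \cup H_1$ is a vertex-disjoint union of even cycles and has $\frac{n}{2} \ge p_0$ edges of colour $0$, so by taking whole cycles greedily until one more whole cycle would push the number of colour-$0$ edges above $p_0$, and then taking a suitable even-length sub-path of one further cycle, one obtains a subgraph $B$ of $H_0 \cup H_1$ with exactly $p_0$ edges of colour $0$ and exactly $p_0$ edges of colour $1$, spanning at most $2p_0 + 1$ vertices; set $C = V(B)$, noting $|C| \ge 2p_0$. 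In the second stage, while there is a colour $i \in \{2,\ldots,k-1\}$ with $e(H_i[C]) < p_0$, I pick such an $i$ and a vertex $v \in C$ whose colour-$i$ neighbour $v'$ lies outside $C$ — such a $v$ exists since otherwise $H_i[C]$ would be a perfect matching of $C$ with $\frac{|C|}{2} \ge p_0$ edges — and add $v'$ to $C$. Each such step adds at most one vertex to $C$ and raises $e(H_i[C])$ for the chosen colour by at least one, so the process halts after at most $\sum_{i=2}^{k-1}p_0 = (k-2)p_0$ steps. Therefore $|C| \le (2p_0+1) + (k-2)p_0 = kp_0 + 1 \le x + p_0 = w$, using $(k-1)p_0 \le x-1$, and by construction $H_i[C]$ has at least $p_0$ edges for every $i \in \Z_k$, as required.

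The hard part is that the vertex budget for $C$ is essentially tight: the inequality $kp_0 + 1 \le w$ is an equality when $x = (k-1)p_0 + 1$, so the first stage genuinely must realise the colour-$0$ and colour-$1$ edges on only $2p_0 + 1$ vertices and the second stage cannot afford to spend more than one vertex per deficient colour. Consequently I expect the most delicate point to be the bookkeeping in the first stage — combining several whole even cycles of $H_0 \cup H_1$ with one partial path so as to land on exactly $p_0$ edges of each of colours $0$ and $1$ using at most $2p_0+1$ vertices — together with confirming that the progress argument in the second stage really does only rely on $|C| \ge 2p_0$, handling the degenerate case $p_0 = 0$ (where $C$ is simply any $x$ vertices), and checking the routine disjointness and padding details in the reduction of the first paragraph.
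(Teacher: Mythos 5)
Your proof is correct, and its overall architecture coincides with the paper's: since $H$ is class~1 and $k$-regular each $H_i$ is a perfect matching, and both arguments take $X_i$ to be all edges of $H_i$ meeting a carefully chosen set $C$ of $w$ vertices (padded up to size $x$), which reduces the lemma to finding $|C|=w$ vertices on which every colour class induces at least $w-x=\lfloor\frac{x-1}{k-1}\rfloor$ edges, exactly as you observe via the count $|C|-e(H_i[C])$. The only genuine difference is how $C$ is grown. The paper runs a single uniform greedy, adding one vertex at a time so as to increase the edge count of a currently most deficient colour, and maintains the invariant that after $i$ vertices every colour induces at least $\lfloor\frac{i-1}{k}\rfloor$ edges; you instead buy colours $0$ and $1$ cheaply from the cycle structure of $H_0\cup H_1$ (achieving $p_0$ induced edges of each on only $2p_0+1$ vertices) and then spend one vertex per missing edge on each of the remaining $k-2$ colours, with the same total budget $kp_0+1\le w$. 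Neither variant gives a better value of $w$; yours leans a little harder on the class-1 structure, while the paper's single invariant avoids the two-phase bookkeeping. Your key supporting claims all check out: the existence, in Stage~2, of a vertex of $C$ whose colour-$i$ partner lies outside $C$ does follow from $|C|\ge 2p_0$ and $e(H_i[C])<p_0$, and padding $C$ up to size $w$ (or $X_i$ up to size $x$) cannot violate anything since adding vertices never decreases $e(H_i[C])$.
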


\begin{proof}
Let $w=x+\lfloor\frac{x-1}{k-1}\rfloor$. Let $V_2$ be a set of any two adjacent vertices in $H$. We will iteratively define a sequence $V_2,\ldots,V_{w}$ of subsets of $V(H)$ such that $V_2 \subseteq \cdots \subseteq V_{w}$ and, for each $i \in \{2,\ldots,w\}$,
\begin{itemize}
    \item[(i)]
$|V_i|=i$;
     \item[(ii)]
at least $b$ of the graphs in $\{H_j[V_i]:j\in \Z_k\}$ have at least $a+1$ edges and the rest have at least $a$ edges, where $a$ and $b$ are the integers such that $i-1=ak+b$ and $b \in \{0,\ldots,k-1\}$.
\end{itemize}

Note that $V_2$ obeys (i) and (ii). Suppose inductively that for some $h \in \{2,\ldots,w-1\}$ we have a set $V_h$ obeying (i) and (ii). Let $a'$, $b'$, $a''$ and $b''$ be the integers such that $h-1=a'k+b'$, $h=a''k+b''$ and $b',b'' \in \{0,\ldots,k-1\}$. Notice that
\begin{equation}\label{eqn:divs and mods}
(a'',b'')=
\left\{
  \begin{array}{ll}
    (a'+1,0) & \hbox{if $h \equiv 0 \mod{k}$} \\
    (a',b'+1) & \hbox{otherwise.}
  \end{array}
\right.
\end{equation}
Let $j_0 \in \Z_k$ such that $|E(H_{j_0}[V_h])| \leq |E(H_j[V_h])|$ for each $j \in \Z_k$. If $|E(H_{j_0}[V_h])| \geq a'+1$, then $|E(H_{j}[V_h])| \geq a'+1$ for each $j \in \Z_k$ by the definition of $j_0$. In this case we take $V_{h+1}=V_h \cup \{u\}$ for any vertex $u \in V(H) \setminus V_h$ and note that $V_{h+1}$ obeys (i) and (ii) using \eqref{eqn:divs and mods}. If $|E(H_{j_0}[V_h])|=a'$ then, because $a'<\frac{h}{2}$, there is a vertex $u \in V(H) \setminus V_h$ such that the edge of $H_{j_0}$ incident with $u$ is also incident with a vertex in $V_h$. We take $V_{h+1}=V_h \cup \{u\}$. Then we have $|E(H_{j_0}[V_{h+1}])| = a'+1$. From this it can be checked, using \eqref{eqn:divs and mods}, that $V_{h+1}$ obeys (i) and (ii). So we have defined $V_2,\ldots,V_w$.

For each $j \in \Z_k$, let $X^*_j$ be the set of all edges of $H_j$ adjacent to at least one vertex in $V_w$ and observe that
\[|X^*_j|=w-|E(H_j[V_w])| \leq w-\left\lfloor\tfrac{w-1}{k}\right\rfloor\]
where the inequality follows because $V_w$ obeys (ii).
Now $w-\frac{w-1}{k} \leq x$ because $w \leq \frac{xk-1}{k-1}$ by definition,
and hence $w-\lfloor\frac{w-1}{k}\rfloor \leq x$ because $w$ and $x$ are integers.
Thus, for each $j \in \Z_k$, we can choose a subset $X_j$ of $E(H_j)$
such that $X^*_j \subseteq X_j$ and $|X_j|=x$.
Now, for each $j \in \Z_k$ and each $u \in V_w$,
there is an edge of $X_j$ and an edge of $X_{j+1}$ incident with $u$.
Therefore, $\{X_0 \ldots, X_{k-1}\}$ satisfies property (P6)
for $w$ and thus is an $(x,w)$-semipartition with respect to $\{H_0,\ldots,H_{k-1}\}$.
\end{proof}

\begin{lem}\label{lem:gen blob construction C2}
Let $k \geq 3 $ be an integer, let $H$ be a $k$-regular class 2 graph with $n \geq 6(k+1)$ vertices, and let $\{H_0,\ldots,H_k\}$ be an equitable matching decomposition of $H$. For any $x \leq \lfloor \frac{nk}{2(k+1)}\rfloor$ there is an $(x,w)$-semipartition of $H$, where $w =  x+\lfloor\frac{x-1}{k}\rfloor$.
\end{lem}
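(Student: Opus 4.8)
The plan is to imitate the proof of Lemma~\ref{lem:gen blob construction}, working now with the $c=k+1$ matchings $H_0,\ldots,H_k$ of the decomposition rather than $k$ matchings. I would fix $w=x+\lfloor\frac{x-1}{k}\rfloor$ and build, one vertex at a time, a nested family $V_2\subseteq V_3\subseteq\cdots$ of vertex sets of $H$ with $|V_i|=i$, maintaining the balance invariant that, on writing $i-1=a(k+1)+b$ with $b\in\{0,\ldots,k\}$, at least $b$ of the induced matchings $\{H_j[V_i]:j\in\Z_{k+1}\}$ have at least $a+1$ edges and the rest have at least $a$ edges. Starting from two adjacent vertices, the growth step is: let $j_0$ attain $\min_j|E(H_j[V_h])|$; if this value already exceeds $\lfloor\frac{h-1}{k+1}\rfloor$ then add any vertex of $V(H)\setminus V_h$, and otherwise add a vertex $u\in V(H)\setminus V_h$ whose $H_{j_0}$-edge has its other endpoint in $V_h$, so that $|E(H_{j_0}[V_{h+1}])|$ rises by one; by the same divisibility bookkeeping as in Lemma~\ref{lem:gen blob construction} this restores the invariant. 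Having grown a set $V$ of the appropriate size, I would then, for each $j\in\Z_{k+1}$, take $X_j$ to consist of every edge of $H_j$ meeting $V$ together with arbitrary further edges of $H_j$ until $|X_j|=x$.

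For the counting, the balance invariant gives $\min_j|E(H_j[V])|\ge\lfloor\frac{|V|-1}{k+1}\rfloor$, so the number of edges of $H_j$ meeting $V$ is at most $|V|-|E(H_j[V])|\le |V|-\lfloor\frac{|V|-1}{k+1}\rfloor$; if $|V|=w$ then, writing $x-1=qk+r$ with $0\le r\le k-1$, one has $w=q(k+1)+r+1$ and hence $w-\lfloor\frac{w-1}{k+1}\rfloor=x$ exactly, just as in the class~$1$ case. Moreover $X_j$ contains every $H_j$-edge meeting $V$, so every vertex of $V$ incident with an $H_j$-edge is incident with an $X_j$-edge; recalling the definition of being $j$-covered for $\{X_0,\ldots,X_k\}$, where a vertex with no incident $H_{j+1}$-edge needs only an incident $X_j$-edge, this is what will deliver (P6) for $w$.

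The step I expect to be the main obstacle is the only genuinely new feature of the class~$2$ case: the colour matchings $H_j$ are not perfect, so each $H_j$ misses a set $U_j$ of vertices (those at which colour $j$ is absent), and $\{U_0,\ldots,U_k\}$ partitions $V(H)$. This interferes in two places. First, in the growth step the vertex $u$ exists only if some vertex of $V_h$ is $H_{j_0}$-matched outside $V_h$; the count of such vertices is $|V_h|-2|E(H_{j_0}[V_h])|-|V_h\cap U_{j_0}|$, and since $k\ge3$ the first two terms leave at least $|V_h|/2$, so it is enough to keep $|V_h\cap U_{j_0}|<|V_h|/2$. Second, a vertex of $U_j$ is never $j$-covered, so delivering $w$ vertices $j$-covered forces the blob to have at least $w$ vertices outside $U_j$ for every $j$, which I expect means taking $|V|$ somewhat larger than $w$ (a short computation puts the needed surplus in the window between about $\frac{w}{k}$ and $\frac{w}{k-1}$, where the covering requirement and the bound $|X_j|\le x$ are simultaneously satisfiable provided the $|V\cap U_j|$ are kept balanced). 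Both requirements are underwritten by the hypothesis $n\ge 6(k+1)$: by equitability of $\{H_0,\ldots,H_k\}$ the sizes $|E(H_j)|$ differ by at most one, so the sizes $|U_j|=n-2|E(H_j)|$ differ by at most two and, summing to $n$, satisfy $|U_j|\le\frac{n}{k+1}+2$; hence no single $U_j$ is large relative to $n$, and $n-|U_j|=2|E(H_j)|\ge 2\lfloor\frac{nk}{2(k+1)}\rfloor\ge 2x$ exceeds $w\le\frac{k+1}{k}x$. I would therefore carry an extra invariant through the induction keeping $|V_i\cap U_j|$ balanced across $j$ (a forced addition never lands in the critical set $U_{j_0}$, and a free addition can be steered into a currently least-used $U_j$, with the slack to do so supplied by $n\ge6(k+1)$), and managing this bookkeeping together with the final extraction of $w$ covered vertices of each colour is the technical heart of the proof; the remainder is the same padding-and-counting as in Lemma~\ref{lem:gen blob construction}.
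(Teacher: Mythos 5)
Your plan diverges substantially from the paper's proof, and the part you yourself flag as ``the technical heart'' is a genuine gap, not just deferred bookkeeping. The paper does not grow a vertex blob at all in the class~2 case: it grows nested \emph{edge} sets $X(1)\subseteq\cdots\subseteq X(w)$ directly, adding one edge in every colour class except one distinguished class $s$ at each step, and carries the coverage condition itself as the inductive invariant (at least $i$ vertices $j$-covered for $X(i)$, for every $j$). The existence of a suitable new edge in each colour is then proved by a counting argument comparing the vertices missed by $H_q$ and $H_{q+1}$ (using that these counts differ by at most $2$ by equitability, with $n\ge 6(k+1)$ handling the small cases). This sidesteps exactly the difficulty your blob approach runs into.

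The obstruction in your route is the balancing of $|V_i\cap U_j|$ across the defect classes $U_j$. You need this balance on \emph{both} sides: an upper bound $|V\cap U_j|\le |V|-w$ so that $w$ vertices of $V$ are $j$-covered, and a lower bound $|V\cap U_j|\gtrsim |V|-x-\lfloor\frac{|V|-1}{k+1}\rfloor$ so that $|X_j|\le x$ (once $|V|>w$ the crude bound $|V|-\lfloor\frac{|V|-1}{k+1}\rfloor$ exceeds $x$ and only the subtraction of $|V\cap U_j|$ rescues the count). Since $\{U_0,\ldots,U_k\}$ partitions $V(H)$, each $|V\cap U_j|$ must be pinned near $|V|/(k+1)$, a narrow window. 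But your growth step does not let you choose which $U_{j'}$ the new vertex falls into: a forced addition must be a vertex $H_{j_0}$-matched into $V_h$, and \emph{all} such candidates can lie in a single, already overloaded $U_{j'}$ (nothing in the hypotheses prevents this), while free additions occur only when every colour already meets the edge-count target and so cannot be relied upon to exist. ``Steer a free addition into the least-used $U_j$'' therefore does not close the argument, and no mechanism is given by which the imbalance self-corrects. Until you either prove the balance can be maintained or restructure the induction so that coverage is tracked directly (as the paper does), the proof is incomplete.
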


\begin{proof}
Throughout this proof, for any subset of $E(H)$ denoted $X(h)$ and any $j \in \Z_{k+1}$, we denote $X(h) \cap E(H_j)$ by $X_j(h)$. Let $w =  x+\lfloor\frac{x-1}{k}\rfloor$.  We claim there is a sequence $X(1),\ldots, X(w)$ of subsets of $E(H)$ such that $X(1) \subseteq \cdots \subseteq X(w)$ and, for all $i \in \{1,\ldots,w\}$,
\begin{itemize}
    \item[(i)]
$|X_j(i)| \in \{i -\lfloor\frac{i-1}{k+1}\rfloor-1,i -\lfloor\frac{i-1}{k+1}\rfloor\}$ for each $j \in \Z_{k+1}$;

    \item[(ii)]  $|\{j \in \mathbb{Z}_{k+1}: |X_j(i)| =i -\lfloor\frac{i-1}{k+1}\rfloor\}|  = k+1- i'$, where $i'$ is the least nonnegative integer congruent to $i-1$ modulo $k+1$;
     \item[(iii)] at least $i$ vertices are $j$-covered for $X(i)$ for each $j \in \Z_{k+1}$.
\end{itemize}

Suppose for the moment that this claim holds. For all $j \in \mathbb{Z}_{k+1}$ observe that
\[|X_j(w)|\leq w-\left\lfloor\tfrac{w-1}{k+1}\right\rfloor \leq  x\]
where the first inequality follows because $X(w)$ satisfies (i) and the second follows because $w \leq  x+\frac{x-1}{k}$ by the definition of $w$ and the fact that $w$ and $x$ are integers. Thus, for each $j \in \Z_k$, we can find a subset $X_j$ of $E(H_j)$ such that $X_j(w) \subseteq X_j$ and $|X_j|=x$. Then, because $X(w)$ satisfies (iii), we have that $\{X_0 \ldots, X_{k-1}\}$ satisfies property (P6) for $w$ and thus is an $(x,w)$-semipartition with respect to $\{H_0,\ldots,H_{k}\}$.

So it only remains to prove the claim. We do so by induction on $i$. Let $u \in V(H)$ be a vertex not incident in $H$ to an edge in $H_{0}$ and $v \in V(H)$  be a vertex not incident in $H$ to an edge in $H_{1}$ (such a vertex $v$ exists because $\{H_0,\ldots,H_k\}$ is equitable and $n \geq 6(k+1)$). Let $X(1)$ be the set containing each edge of $H$ incident with $u$ and the unique edge of $H_0$ incident with $v$. It is easy to check that $X(1)$ satisfies (i), (ii) and (iii).

Now suppose inductively that for some $h \in \{1,\ldots,w-1\}$ there is a set $X(h)$ that satisfies (i), (ii) and (iii).
We will show that there is a choice for $X(h+1)$ that satisfies (i), (ii) and (iii). Let $s$ be any element of $\mathbb{Z}_{k+1}$ such that $|X_s(h)| = h -\lfloor\frac{h-1}{k+1}\rfloor$ (note that at least one such exists by (ii)). We will construct $X(h+1)$ as $X(h) \cup \{e_j: j \in \Z_{k+1} \setminus \{s\}\}$ where $e_j$ is an edge in $E(H_j) \setminus X_j(h)$ for each $j \in \Z_{k+1} \setminus \{s\}$. It can be checked that this will ensure that $X(h+1)$ satisfies (i) and (ii) for $i=h+1$. (To see this, note that if $h \not\equiv 0 \mod{k+1}$ then $\lfloor\frac{h}{k+1}\rfloor=\lfloor\frac{h-1}{k+1}\rfloor$ and that if $h \equiv 0 \mod{k+1}$ then $\lfloor\frac{h}{k+1}\rfloor=\lfloor\frac{h-1}{k+1}\rfloor+1$ and $|X_j(h)| = h-1-\lfloor\frac{h-1}{k+1}\rfloor=h-\lfloor\frac{h}{k+1}\rfloor$ for all $j \in \Z_{k+1} \setminus \{s\}$ by (ii).) So our goal is to ensure that (iii) also holds.

We will first choose a subset $T$ of $\Z_{k+1} \setminus \{s\}$ and an $e_j$ for each $j \in T$ such that, for all $j \in T \cup \{s\}$, at least $h+1$ vertices are $j$-covered for $X'(h+1)$, where $X'(h+1)=X(h) \cup \{e_j: j \in T\}$. If more than $h$ vertices are $s$-covered for $X(h)$, then we can take $T=\emptyset$ and $X'(h+1)=X(h)$, so assume otherwise that precisely $h$ vertices are $s$-covered for $X(h)$. Thus, by our choice of $s$, there are $2(h -\lfloor\frac{h-1}{k+1}\rfloor)$ vertices incident with an edge in $X_s(h)$ and only $h$ of these are $s$-covered. Now $2(h -\lfloor\frac{h-1}{k+1}\rfloor)>h$ because $k \geq 3$ and hence there is a vertex $v$ that is incident with an edge in $X_s(h)$ but is not $s$-covered. Let
\[T=\{j \in \Z_{k+1} \setminus \{s\}:\hbox{an edge in $E(H_j) \setminus X_j(h)$ is incident with $v$}\}\]
and, for each $j \in T$, take $e_j$ to be the edge of $E(H_j) \setminus X_j(h)$ incident with $v$. Then, for each $j \in T \cup \{s\}$, we have that $v$ was not $j$-covered for $X(h)$ but is $j$-covered for $X'(h+1)$ and hence, because at least $h$ vertices were $j$-covered for $X(h)$, at least $h+1$ vertices are $j$-covered for $X'(h+1)$. So we can find $T$ and $X'(h+1)$ with the claimed properties.

It remains to choose $e_j$ for each $j \in \Z_k \setminus (T \cup \{s\})$. If $T \cup \{s\}=\Z_{k+1}$ we are done. Otherwise, let $q$ be an element of $\Z_{k+1} \setminus (T \cup \{s\})$ such that $q+1 \in T \cup \{s\}$. We will show that there is a choice for $e_q$ in $E(H_q) \setminus X_q(h)$ such that at least $h+1$ vertices are $q$-covered for $X'(h+1) \cup \{e_q\}$. This will suffice to complete the proof because a suitable $X(h+1)$ will then be obtainable by iterating this procedure. If more than $h$ vertices are $q$-covered for $X'(h+1)$, then we may take $e_q$ to be an arbitrary edge of $H_q \setminus X_q(h)$. So we can assume that precisely $h$ vertices are $q$-covered for $X'(h+1)$.

If there is a vertex $u$ that is incident with an edge in $E(H_q)\setminus X'_q(h+1)$ but not with an edge in $E(H_{q+1})\setminus X'_{q+1}(h+1)$, then we can take $e_j$ to be the edge in $E(H_q)\setminus X'_q(h+1)$ incident with $u$. The vertex $u$ was not $q$-covered for $X'(h+1)$ but is for $X'(h+1) \cup \{e_q\}$. So it suffices to show that there is such a vertex $u$. Let $w_q$ and $w_{q+1}$ be the number of vertices of $H$ that are incident with no edge in $H_q$ and $H_{q+1}$ respectively. The number of vertices not incident with an edge in $E(H_{q+1})\setminus X'_{q+1}(h+1)$ is thus $2|X'_{q+1}(h+1)|+w_{q+1} \geq 2h -2\lfloor\frac{h}{k+1}\rfloor+w_{q+1}$. The inequality follows because either $q+1=s$ and $|X'_{q+1}(h)|=|X_{q+1}(h)| = h -\lfloor\frac{h-1}{k+1}\rfloor$ by the definition of $s$ or $q+1 \in T$ and $|X'_{q+1}(h)|=|X_{q+1}(h)|+1 \geq h -\lfloor\frac{h-1}{k+1}\rfloor$ because $X(h)$ obeys (i). Now, at most $h$ of these vertices not incident with an edge in $E(H_{q+1})\setminus X'_{q+1}(h+1)$ are not incident with an edge in $X'_q(h+1)$, because precisely $h$ vertices are $q$-covered for $X'(h+1)$, and at most $w_q$ are not incident with an edge in $H_q$. So such a vertex $u$ will exist provided that
\[2h -2\lfloor\tfrac{h}{k+1}\rfloor+w_{q+1}>h+w_q.\]
Now $w_q-w_{q+1} \leq 2$ because $\{H_0,\ldots,H_{k}\}$ is equitable and hence this inequality will hold and such a $u$ will exist unless $h \leq 4$. If $h \leq 4$, then note that, because $\{H_0,\ldots,H_{k}\}$ is equitable, $|E(H_{q+1})| \leq \lceil\tfrac{kn}{2(k+1)}\rceil$ and hence
\[w_{q+1} \geq n-2\lceil\tfrac{kn}{2(k+1)}\rceil > n-\tfrac{kn}{k+1}-2 \geq 4 \geq h\]
where the second last inequality follows because $n \geq 6(k+1)$. Thus, one of the $w_{q+1}$ vertices incident with no edge in $H_{q+1}$ will not be incident with an edge in $X'_q(h+1)$, because precisely $h$ vertices are $q$-covered for $X'(h+1)$. So again such a $u$ exists.

Thus we can choose an $e_q$ in $E(H_q) \setminus X_q(h)$ such that at least $h+1$ vertices are $q$-covered for $X'(h+1) \cup \{e_q\}$. As discussed, by iterating this procedure we can obtain a choice for $X(h+1)$ that satisfies (i), (ii) and (iii). This completes the proof.
\end{proof}

We now present a probabilistic method of finding $(x,w)$-semipartitions of $k$-regular graphs.

\begin{lem}\label{lem:prob cons}
Let $k \geq 3$ be an integer and let $c \in \{k,k+1\}$. If $H$ is a $k$-regular graph of order $n$, $\{H_0,\ldots,H_{c-1}\}$ is an equitable matching decomposition of $H$, and $\alpha$ is a constant such that $0 < \alpha < \frac{k}{c}$, then there is an  $(x,w)$-semipartition of $H$, where
\[
x= \mfrac{\alpha k (2 - \alpha)}{2c}n +O\left(\sqrt{n}\right),\quad w = \mfrac{\alpha k}{c}n+ O\left(\sqrt{n}\right).
\]
\end{lem}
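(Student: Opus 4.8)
The plan is to use the probabilistic method: choose a random set $S$ of vertices and, for each $i \in \Z_c$, let $X_i$ consist of a suitable superset of the edges of $H_i$ that meet $S$. Concretely, I would include each vertex of $H$ in $S$ independently with probability $\alpha$, and set $X_i' = \{e \in E(H_i) : e$ has an endpoint in $S\}$ for each $i \in \Z_c$.

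First I would record a deterministic observation. Write $C_i$ for the set of $2|E(H_i)|$ vertices incident with an edge of $H_i$. If $v \in S \cap C_i$, then the edge of $H_i$ at $v$ lies in $X_i'$, and either $v$ is incident with an edge $f$ of $H_{i+1}$, in which case $f \in X_{i+1}'$, or no edge of $H_{i+1}$ is incident with $v$; in either case $v$ is $i$-covered for $\{X_0', \ldots, X_{c-1}'\}$. So at least $|S \cap C_i|$ vertices are $i$-covered for $\{X_0', \ldots, X_{c-1}'\}$. I would also observe that being $i$-covered is preserved when the sets $X_j$ are enlarged, so it will suffice to find, for each $i$, a set $X_i$ with $X_i' \subseteq X_i \subseteq E(H_i)$ and $|X_i| = x$: any such choice gives an $(x,w)$-semipartition as soon as $|S \cap C_i| \ge w$ for all $i$.

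It remains to control $|X_i'|$ and $|S \cap C_i|$. Since $H_i$ is a matching, its edges are pairwise vertex-disjoint, so the events $\{e \in X_i'\}$ for $e \in E(H_i)$ are mutually independent, each of probability $1 - (1-\alpha)^2 = \alpha(2-\alpha)$; thus $|X_i'|$ is binomial with mean $\alpha(2-\alpha)|E(H_i)|$, and similarly $|S \cap C_i|$ is binomial with mean $2\alpha|E(H_i)|$. Equitability gives $|E(H_i)| = \tfrac{nk}{2c} + O(1)$ for every $i$. Set $x = \big\lceil \alpha(2-\alpha)\tfrac{nk}{2c} + C\sqrt{n}\,\big\rceil$ and $w = \big\lfloor \tfrac{\alpha k}{c}n - C\sqrt{n}\big\rfloor$ for a constant $C = C(k)$ to be chosen. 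By a Hoeffding bound, for each $i$ the probability that $|X_i'| > x$ or that $|S \cap C_i| < w$ is at most $e^{-\Omega(C^2)}$; choosing $C$ large enough and taking a union bound over these $2c$ events (here $c \le k+1$ keeps the number of events bounded), there is positive probability that $|X_i'| \le x$ and $|S \cap C_i| \ge w$ for all $i \in \Z_c$. Fix such an $S$. Because $\alpha < k/c \le 1$, the quantity $(1-\alpha)^2$ is a positive constant, so $x < \tfrac{nk}{2c} - 1 \le |E(H_i)|$ once $n$ is large; hence a set $X_i$ with $X_i' \subseteq X_i \subseteq E(H_i)$ and $|X_i| = x$ exists for each $i$. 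By the two observations above, $\{X_0, \ldots, X_{c-1}\}$ is then an $(x,w)$-semipartition, and $x = \tfrac{\alpha k(2-\alpha)}{2c}n + O(\sqrt n)$, $w = \tfrac{\alpha k}{c}n + O(\sqrt n)$, as required.

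The main point requiring care is the concentration: it has to hold for all $c$ indices simultaneously, which is exactly where the hypothesis $c \le k+1$ enters, since it keeps the union bound over a constant number of events. The remaining checks — that $x$ can be realized inside $E(H_i)$, which uses $\alpha < 1$, and that $w \ge 0$ for large $n$ — are routine given the estimate $|E(H_i)| = \tfrac{nk}{2c} + O(1)$.
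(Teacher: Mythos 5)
Your proof is correct and follows essentially the same route as the paper's: a random vertex set $S$ with inclusion probability $\alpha$, Hoeffding concentration for the (binomial, by vertex-disjointness of the matching edges) counts, a union bound over the $2c$ events, and then padding the edge sets up to size exactly $x$. The only difference is cosmetic — by working directly with $|S\cap C_i|$ and the binomial count of edges meeting $S$, you avoid the paper's case split between $c=k$ and $c=k+1$ (where it computes the number of incident edges as $|S|-|S\cap V_j|-|E(H_j[S])|$), but the underlying argument is the same.
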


\begin{proof}
For each $v \in V(G)$, let $I_v$ be a random variable that is 1 with probability $\alpha$ and 0 otherwise. Let $R=\{v \in V(G):I_v=1\}$.  Observe that then $|E(H_j[R])|$ is a binomial random variable with $|E(H_j)|$ trials and success probability $\alpha^2$ and so by Hoeffding's inequality \cite{MR144363} we have
\begin{equation}\label{eqn:edgesConcentration}
\mathbb{P}\left(|E(H_j[R])| \leq \alpha^2|E(H_j)|-\sqrt{\tfrac{1}{2}|E(H_j)|\log(4c)}\right) \leq \mfrac{1}{4c}.
\end{equation}
The proof now divides into cases according to whether $c=k$ or $c=k+1$.

\textbf{Case 1.} Suppose that $c=k$. Note that in this case $|E(H_i)|=\frac{n}{2}$
for each $i \in \Z_k$. Now $|R|$ is a binomial random variable with $n$ trials and success probability $\alpha$ and so $\Pr(|R| > \lceil\alpha n\rceil) < \frac{1}{2}$. Thus, by \eqref{eqn:edgesConcentration} and the union bound, there is a subset $S$ of $V$ such that $|S| = \lceil\alpha n\rceil$ and $|E(H_i[S])|  \geq \frac{1}{2}\alpha^2n-O(\sqrt{n})$ for each $i \in \Z_k$ (note that vertices can be added arbitrarily to ensure that $|S| = \lceil\alpha n\rceil$).

Let $j \in \Z_k$. Let $m_j$ be the number of edges of $H_j$ that are incident with at least one vertex in $S$. Because every vertex in $S$ has an edge of $H_j$ incident with it and there are $|E(H_j[S])|$ edges of $H_j$ that are incident with two vertices of $S$, we have
\[m_j=|S|-|E(H_j[S])| \leq \lceil\alpha n\rceil - \tfrac{1}{2}\alpha^2n + O\bigl(\sqrt{n}\bigr) = \alpha n\bigl(1 - \tfrac{\alpha}{2}\bigr) + O\bigl(\sqrt{n}\bigr).\]
So we can take $x=\alpha n(1-\frac{\alpha}{2})+O(\sqrt{n})$ such that $m_i \leq x$ for each $i \in \Z_k$. Then, for each $i \in \Z_k$, we can choose a subset $X_i$ of $E(H_i)$ such that $|X_i|=x$ and $X_i$ contains all $m_i$ edges of $H_i$ that are incident with a vertex in $S$. Because each vertex in $S$ has edges of $H_i$ and $H_{i+1}$ incident with it for each $i \in \Z_k$, $\{X_0,\ldots,X_{k-1}\}$ obeys (P6) for
\[w = |S| = \alpha n + O\left(\sqrt{n}\right)\]
and hence is an $(x,w)$-semipartition with respect to $\{H_0,\ldots,H_{k-1}\}$.

\textbf{Case 2.} Suppose that $c=k+1$. For each $i \in \Z_{k+1}$, note that in this case $|E(H_i)| \in \{\lfloor\frac{kn}{2(k+1)}\rfloor,\lceil\frac{kn}{2(k+1)}\rceil\}$ and let $V_i=V(H) \setminus V(H_i)$. Note that $\{V_0,\ldots, V_{k}\}$ is a partition of $V(H)$ and that $V_i =\frac{n}{k+1}+O(1)$ for each $i \in \Z_{k+1}$. So, for $i \in \Z_{k+1}$, $|R \cap V_i|$ is a binomial random variable with $|V_i|$ trials and success probability $\alpha$ and by Hoeffding's inequality we have
\begin{equation}\label{eqn:verticesConcentration}
\mathbb{P}\left(\bigl||R \cap V_j|-\alpha|V_j|\bigr| \ge \sqrt{\tfrac{1}{2}|V_j|\log(8c)}\right) \leq \mfrac{1}{4c}.
\end{equation}
Thus, by \eqref{eqn:edgesConcentration}, \eqref{eqn:verticesConcentration} and the union bound, there is a subset $S$ of $V$ such that, for each $i \in \Z_{k+1}$, $|S \cap V_i| = \frac{\alpha }{k+1}n+O(\sqrt{n})$ and $|E(H_i[S])| \geq \alpha^2\frac{k}{2(k+1)}n-O(\sqrt{n})$. Note that this implies that $|S|=\alpha n +O(\sqrt{n})$.

Let $j \in \Z_{k+1}$. Let $m_j$ be the number of edges of $H_j$ that are incident with at least one vertex in $S$. Because every vertex in $S \setminus V_j$ has an edge of $H_j$ incident with it and there are $|E(H_j[S])|$ edges of $H_j$ that are incident with two vertices of $S$, we have
\[
m_j=|S|-|S \cap V_j|-|E(H_j[S])| \leq \alpha n - \tfrac{\alpha }{k+1}n - \tfrac{k}{2(k+1)}\alpha^2n + O\bigl(\sqrt{n}\bigr) = \tfrac{\alpha k(2-\alpha)}{2(k+1)}n + O\bigl(\sqrt{n}\bigr).
\]
So we can take $x=\tfrac{\alpha k(2-\alpha)}{2(k+1)}n+O(\sqrt{n})$ such that $m_i \leq x$ for each $i \in \Z_{k+1}$. Then, for each $i \in \Z_{k+1}$, we can choose a subset $X_i$ of $E(H_i)$ such that $|X_i|=x$ and $X_i$ contains all $m_i$ edges of $H_i$ that are incident with a vertex in $S$. Now, for each $i \in \Z_{k+1}$, $|S \setminus V_i|=\frac{\alpha k}{k+1}n+ O(\sqrt{n})$ and each vertex in $S \setminus V_i$ has an edge of $X_i$ incident with it and either has an edge of $X_{i+1}$ incident with it or has no edge of $H_{i+1}$ incident with it. Thus, $\{X_0,\ldots,X_k\}$ obeys (P6) for
\[w=\mfrac{\alpha k}{k+1}n+ O\left(\sqrt{n}\right)\]
and hence is an $(x,w)$-semipartition with respect to $\{H_0,\ldots,H_{k}\}$.
\end{proof}

Now we present the proofs of the lower bounds of Theorem~\ref{thm:k reg}, using the explicit and probabilistic methods for finding $(x,w)$-semipartitions given above. We begin with the following that uses Lemmas~\ref{lem:gen blob construction} and \ref{lem:gen blob construction C2}.
\begin{lem}\label{thm:main thm explicit con}
For any integers $k \geq 3$ and $n \geq 6(k+1)$ such that $nk$ is even,
\[
\cms(n,k) \geq \mfrac{k(5k-3)}{4(k+1)(4k-3)}n-6 \quad \text{ and } \quad  \cms_1(n,k) \geq \mfrac{5k-8}{4(4k-7)}n-6\,.
\]
\end{lem}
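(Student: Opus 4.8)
The plan is to treat each $k$-regular graph $H$ of order $n$ by building an $(x,w)$-partition with $x$ as large as possible and then applying Proposition~\ref{prop:seq given some conds}, which will give $\cms(H)\ge x+y-1$ with $y=\lceil 3x-\tfrac32 w\rceil$. The same chain of steps works in both the class~$1$ and class~$2$ cases: obtain an equitable matching decomposition of $H$ from Lemma~\ref{lem:colEq}; produce an $(x,w)$-semipartition from Lemma~\ref{lem:gen blob construction} (class~$1$) or Lemma~\ref{lem:gen blob construction C2} (class~$2$); verify that the resulting pair $(x,w)$ satisfies (P1); upgrade the semipartition to an $(x,w)$-partition via Lemma~\ref{lem:can make T with S}; and conclude with Proposition~\ref{prop:seq given some conds}. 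Since $\cms_1(n,k)$ ranges only over class~$1$ graphs, the second displayed inequality will be exactly the class~$1$ instance of this argument; for the first inequality we will additionally need to compare the class~$1$ and class~$2$ estimates.

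Suppose first that $H$ is class~$1$. By Lemma~\ref{lem:colEq} there is an equitable matching decomposition $\{H_0,\dots,H_{k-1}\}$ of $H$, and because $H$ has a proper $k$-edge-colouring whose colour classes are perfect matchings, $n$ is even and $\lfloor\tfrac1k|E(H)|\rfloor=\tfrac n2$. I would take $x$ to be the largest integer for which $(x,w)$ satisfies the second inequality of (P1), where $w=x+\lfloor\tfrac{x-1}{k-1}\rfloor$ is the value returned by Lemma~\ref{lem:gen blob construction}; this $x$ turns out to have order $\tfrac{(k-1)n}{2(4k-7)}$, so in particular $x\le\tfrac n2$ as required by that lemma. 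The first inequality of (P1) is automatic, since $\lfloor\tfrac{x-1}{k-1}\rfloor\le\tfrac{x-1}{2}<\tfrac x2$ for $k\ge3$ forces $w\le\tfrac32 x$. Substituting $w$ gives $y=\lceil\tfrac32(x-\lfloor\tfrac{x-1}{k-1}\rfloor)\rceil$, and bounding the floor and ceiling yields $x+y-1\ge\tfrac{5k-8}{2(k-1)}x-1$ together with $x+2y\le\tfrac{4k-7}{k-1}x+O(1)$; the first of these, combined with the lower bound on $x$ forced by the second, gives $\cms(H)\ge\tfrac{5k-8}{4(4k-7)}n-6$ once Lemma~\ref{lem:can make T with S} and Proposition~\ref{prop:seq given some conds} are applied. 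The class~$2$ case is identical in outline, now using the equitable decomposition of $H$ into $k+1$ matchings from Lemma~\ref{lem:colEq} (each of size $\lfloor\tfrac{nk}{2(k+1)}\rfloor$ or $\lceil\tfrac{nk}{2(k+1)}\rceil$), the semipartition from Lemma~\ref{lem:gen blob construction C2} with $w=x+\lfloor\tfrac{x-1}{k}\rfloor$ (whose hypothesis $n\ge6(k+1)$ is the source of that bound in the statement), and a choice of $x$ of order $\tfrac{nk^2}{2(k+1)(4k-3)}$; the analogous estimates $x+y-1\ge\tfrac{5k-3}{2k}x-1$ and $x+2y\le\tfrac{4k-3}{k}x+O(1)$ then give $\cms(H)\ge\tfrac{k(5k-3)}{4(k+1)(4k-3)}n-6$.

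To finish, $\cms_1(n,k)\ge\tfrac{5k-8}{4(4k-7)}n-6$ is immediate from the class~$1$ case, and $\cms(n,k)\ge\tfrac{k(5k-3)}{4(k+1)(4k-3)}n-6$ follows because the class~$2$ case handles class~$2$ graphs directly while for class~$1$ graphs the class~$1$ bound dominates the class~$2$ one; that comparison reduces to the identity $(5k-8)(k+1)(4k-3)-k(5k-3)(4k-7)=4(k-1)(5k-6)$, which is nonnegative for $k\ge2$. I expect the only genuine work to be the rounding bookkeeping in the middle paragraph: $x$ must be pinned down precisely enough that the weak inequality in (P1) holds, while the cumulative losses — from the floors and ceilings defining $w$ and $y$, from the gap between $x$ and $\tfrac{(k-1)n}{2(4k-7)}$ (respectively $\tfrac{nk^2}{2(k+1)(4k-3)}$), and from the $-1$ in Proposition~\ref{prop:seq given some conds} — must be kept within the additive constant $6$. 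Everything else is a direct chaining of results already established in this section.
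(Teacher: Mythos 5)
Your proposal follows essentially the same route as the paper: the paper's proof also chains Lemma~\ref{lem:colEq}, Lemma~\ref{lem:gen blob construction} or \ref{lem:gen blob construction C2}, Lemma~\ref{lem:can make T with S} and Proposition~\ref{prop:seq given some conds}, choosing $x=\bigl\lfloor\frac{(nk-8c)(c-1)}{2c(4c-7)}\bigr\rfloor$ and $w=x+\lfloor\frac{x-1}{c-1}\rfloor$ where $c$ is the chromatic index, which matches your choices of order $\frac{(k-1)n}{2(4k-7)}$ and $\frac{nk^2}{2(k+1)(4k-3)}$ in the two cases. The only cosmetic difference is that the paper treats both classes at once by parametrising in $c$ and obtains the $\cms(n,k)$ bound by monotonicity in $c$, whereas you handle the two classes separately and verify the comparison via the (correct) identity $(5k-8)(k+1)(4k-3)-k(5k-3)(4k-7)=4(k-1)(5k-6)$; the rounding bookkeeping you defer is exactly what the paper's explicit choice of $x$ is designed to absorb within the additive constant $6$.
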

\begin{proof}
Let $H$ be a $k$-regular graph with order $n$ and chromatic index $c$.
Let
\[x = \left\lfloor\mfrac{(nk-8c)(c-1)}{2c(4c-7)}\right\rfloor \quad \text{ and } \quad w = x+\left\lfloor \mfrac{x-1}{c-1}\right\rfloor.\]
By Lemma~\ref{lem:gen blob construction} or \ref{lem:gen blob construction C2} there exists
an $(x,w)$-semipartition for $H$, noting that $x \leq \lfloor\frac{nk}{2c}\rfloor$ because $c-1 \leq 4c-7$.
We show that there exists an $(x,w)$-partition. Obviously $w \leq \frac{3}{2}x$, so by Lemma~\ref{lem:can make T with S} it suffices to show
that $x+2y \leq \frac{nk}{2c} $ for
$y = \left\lceil 3x-\frac{3}{2}w\right\rceil$ .

We have
\[
x+2y \leq \mfrac{x(4c-7)}{c-1} +4 \leq \mfrac{nk-8c}{2c} +4 = \mfrac{nk}{2c}
\]
where the first inequality follows because $2y \leq 6x-3w+1$ and $w \geq x+\frac{x-c+1}{c-1}$, and the second follows because
$x(4c-7) \leq \frac{1}{2c}(nk-8c)(c-1)$.

Because there exists an $(x,w)$-partition, we have $\cms(H) \geq x+y-1 $, by Proposition~\ref{prop:seq given some conds}. So
\[ \cms(H) \geq x+y-1  \geq \mfrac{(5c-8)x-2c+5}{2(c-1)}  > \mfrac{k(5c-8)}{4c(4c-7)}n-6 \]
where the second inequality follows because $y \geq 3x-\frac{3}{2}w$ and  $w \leq \frac{xc-1}{c-1}$ and the third follows because $x \geq \frac{(nk-8c)(c-1)}{2c(4c-7)}-1$. Substituting $c=k$ and $c \leq k+1$ gives the required bounds for $\cms_1(n,k)$ and $\cms(n,k)$, respectively.
\end{proof}

\begin{lem}\label{thm:prob con}
Let $k \geq 3$ be an integer. Then for integers $n > k$ such that $nk$ is even,
\[\cms(n,k) \geq \mfrac{31k}{98(k+1)}n-o(n) \quad \text{ and } \quad \cms_1(n,k) \geq \mfrac{31}{98}n-o(n)\]
\end{lem}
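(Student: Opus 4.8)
The plan is to feed the probabilistic construction of Lemma~\ref{lem:prob cons} into Lemma~\ref{lem:can make T with S} and Proposition~\ref{prop:seq given some conds}, and then optimise over the density parameter $\alpha$. Let $H$ be a $k$-regular graph of order $n$ and let $c$ be its chromatic index, so that $c\in\{k,k+1\}$ by Vizing's theorem~\cite{Viz}, with $c=k$ whenever $H$ is class~$1$. By Lemma~\ref{lem:colEq} fix an equitable matching decomposition $\{H_0,\dots,H_{c-1}\}$ of $H$. Fix a constant $\alpha$ with $0<\alpha<\tfrac{1}{7}$; since $\tfrac{k}{c}\ge\tfrac{3}{4}$ we have $\alpha<\tfrac{k}{c}$, so Lemma~\ref{lem:prob cons} applies and yields an $(x,w)$-semipartition of $H$ with $x=\tfrac{\alpha k(2-\alpha)}{2c}n+O(\sqrt{n})$ and $w=\tfrac{\alpha k}{c}n+O(\sqrt{n})$.

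Next I would check that $x$ and $w$ obey (P1), so that Lemma~\ref{lem:can make T with S} converts this semipartition into an $(x,w)$-partition of $H$. Putting $y=\lceil 3x-\tfrac{3}{2}w\rceil$, a short calculation gives $y=\tfrac{3\alpha k(1-\alpha)}{2c}n+O(\sqrt{n})$, hence $\tfrac{w}{x}\to\tfrac{2}{2-\alpha}<\tfrac{3}{2}$ and $x+2y=\tfrac{\alpha k(8-7\alpha)}{2c}n+O(\sqrt{n})$. As $\alpha(8-7\alpha)<1$ for $0<\alpha<\tfrac{1}{7}$, the latter is at most $\tfrac{nk}{2c}-\Omega(n)$, which is below $\lfloor\tfrac{1}{c}|E(H)|\rfloor=\lfloor\tfrac{nk}{2c}\rfloor$ once $n$ is large. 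Thus both parts of (P1) hold for all sufficiently large $n$, and Lemma~\ref{lem:can make T with S} gives an $(x,w)$-partition of $H$.

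By Proposition~\ref{prop:seq given some conds} we then get $\cms(H)\ge x+y-1=\tfrac{\alpha k(5-4\alpha)}{2c}n-o(n)$. Applying this to every class~$1$ graph (where $c=k$) gives $\cms_1(n,k)\ge\tfrac{\alpha(5-4\alpha)}{2}n-o(n)$, and applying it to every $k$-regular graph (where the guarantee is weakest when $c=k+1$) gives $\cms(n,k)\ge\tfrac{\alpha k(5-4\alpha)}{2(k+1)}n-o(n)$. Since $\alpha\mapsto\alpha(5-4\alpha)$ is increasing on $(0,\tfrac{1}{7}]$ with $\sup_{0<\alpha<1/7}\alpha(5-4\alpha)=\tfrac{1}{7}\bigl(5-\tfrac{4}{7}\bigr)=\tfrac{31}{49}$, taking $\alpha\to\tfrac{1}{7}$ through values below $\tfrac{1}{7}$ yields $\cms(n,k)\ge\tfrac{31k}{98(k+1)}n-o(n)$ and $\cms_1(n,k)\ge\tfrac{31}{98}n-o(n)$, as required.

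The one delicate point is that the inequality $x+2y\le\lfloor\tfrac{nk}{2c}\rfloor$ in (P1) becomes exactly tight at $\alpha=\tfrac{1}{7}$ (there $\alpha(8-7\alpha)=1$), which is why $\alpha$ must be chosen strictly below $\tfrac{1}{7}$ and the optimal constant $\tfrac{31}{49}$ is attained only in the limit; one also has to make sure the slack $\Omega(n)$ created by taking $\alpha<\tfrac{1}{7}$ genuinely dominates the $O(\sqrt{n})$ error terms coming from Lemma~\ref{lem:prob cons}. Everything else is routine algebra and bookkeeping of the $o(n)$ terms.
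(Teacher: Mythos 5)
Your proposal is correct and follows essentially the same route as the paper: feed the semipartition of Lemma~\ref{lem:prob cons} into Lemma~\ref{lem:can make T with S} via the verification of (P1), apply Proposition~\ref{prop:seq given some conds} to get $\cms(H)\ge\frac{\alpha k(5-4\alpha)}{2c}n-o(n)$, and let $\alpha\to\frac{1}{7}$. The algebra (the expressions for $y$, $x+2y$ and $x+y$, and the threshold $\alpha(8-7\alpha)<1$) matches the paper's computation exactly.
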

\begin{proof}
Let $H$ be a $k$-regular graph with order $n$ and chromatic index $c$.
Let $0 < \alpha < \frac{1}{7}$.
Then, by Lemma~\ref{lem:prob cons}, there is an $(x,w)$-semipartition where
$x = \frac{\alpha k (2 - \alpha)}{2c}n +O\left(\sqrt{n}\right)$ and $w = \frac{\alpha k}{c}n+ O\left(\sqrt{n}\right)$.
For sufficiently large $n$, we can apply Lemma~\ref{lem:can make T with S} to obtain an $(x,w)$-partition, since clearly
$w \leq \frac{3}{2} x$ and making substitutions for $x$ and $w$ shows that
\begin{equation}\label{eqn:oneP1cond}
x+2\left\lceil 3x-\tfrac{3}{2}w \right\rceil
 = \mfrac{\alpha k(8-7\alpha)}{2c}n+O(\sqrt{n})
\leq \left\lfloor \mfrac{nk}{2c} \right\rfloor
\end{equation}
where the last inequality follows from $\alpha < \frac{1}{7}$.
Therefore, applying Proposition~\ref{prop:seq given some conds} yields,
\[
\cms(H) \geq x+\left\lceil 3x-\tfrac{3}{2}w\right\rceil-1
=\mfrac{\alpha k(5-4\alpha)}{2c}n+O(\sqrt{n})\,,
\]
where the final estimate is obtained by making the appropriate substitutions for $x$ and $w$.
As the above inequality holds for any $0 < \alpha < \frac{1}{7}$, by substituting $\alpha$ sufficiently close to $\frac{1}{7}$ we obtain the result.
\end{proof}

We can now present the proof of Theorem~\ref{thm:k reg}.

\begin{proof}[\textbf{\textup{Proof of Theorem~\ref{thm:k reg}.}}]
Let $k \geq 3$ be an integer. Then
the lower bounds for $\cms(n,k)$ and $\cms_1(n,k)$ hold for $n \geq 6(k+1)$ by Lemmas~\ref{thm:main thm explicit con} and \ref{thm:prob con}.
The upper bound for $\cms(n,k)$ follows from Theorem~\ref{thm:count exa},
since for all $n \geq 3k+5$ there is a $k$-regular graph of order $n$ with $B_k$ as a subgraph.
Finally, $\cms(G) \leq \frac{n-1}{2}$ for each $k$-regular graph $G$. To see this, note that otherwise there would be an ordering $\ell=(e_0,\ldots,e_{kn/2-1})$ of a $k$-regular graph $G$ of even order $n$ such that each of $e_{0},\ldots, e_{n/2-1}$ and $e_{1},\ldots, e_{n/2}$ form a matching of size $\frac{n}{2}$ in $G$, which is impossible as two matchings of size $\frac{n}{2}$ cannot differ by exactly one edge.
\end{proof}

\section{Conclusion}

The table below gives, for each integer $k \geq 3$, the strongest consequences of Theorem~\ref{thm:k reg} for large~$n$.

\begin{table}[H]
\begin{center}
\setlength{\tabcolsep}{2pt}
\begin{tabular}{rccclc|crcccl}
  $\frac{1}{4}n-6$ & $\leq$ & $\cms(n,3)$ & $\leq$ & $\frac{3}{8}n$ & \qquad&\qquad & $\frac{7}{20}n-6$ & $\leq$ & $\cms_1(n,3)$ & $\leq$ & $\frac{n-1}{2}$ \\[1mm]
  $\frac{17}{65}n-6$ & $\leq$ & $\cms(n,4)$ & $\leq$ & $\frac{2}{5}n$ & \qquad&\qquad & $\frac{1}{3}n-6$ & $\leq$ & $\cms_1(n,4)$ & $\leq$ & $\frac{n-1}{2}$ \\[1mm]
  $\frac{55}{204}n-6$ & $\leq$ & $\cms(n,5)$ & $\leq$ & $\frac{5}{12}n$ & \qquad&\qquad & $\frac{17}{52}n-6$ & $\leq$ & $\cms_1(n,5)$ & $\leq$ & $\frac{n-1}{2}$ \\[1mm]
  $\frac{27}{98}n-6$ & $\leq$ & $\cms(n,6)$ & $\leq$ & $\frac{3}{7}n$ & \qquad&\qquad & $\frac{11}{34}n-6$ & $\leq$ & $\cms_1(n,6)$ & $\leq$ & $\frac{n-1}{2}$ \\[1mm]
  $\frac{7}{25}n-6$ & $\leq$ & $\cms(n,7)$ & $\leq$ & $\frac{7}{16}n$ & \qquad&\qquad & $\frac{9}{28}n-6$ & $\leq$ & $\cms_1(n,7)$ & $\leq$ & $\frac{n-1}{2}$ \\[1mm]
  $\frac{74}{261}n-6$ & $\leq$ & $\cms(n,8)$ & $\leq$ & $\frac{4}{9}n$ & \qquad&\qquad & $\frac{8}{25}n-6$ & $\leq$ & $\cms_1(n,8)$ & $\leq$ & $\frac{n-1}{2}$ \\[1mm]
  $\frac{63}{220}n-6$ & $\leq$ & $\cms(n,9)$ & $\leq$ & $\frac{9}{20}n$ & \qquad&\qquad & $\frac{37}{116}n-6$ & $\leq$ & $\cms_1(n,9)$ & $\leq$ & $\frac{n-1}{2}$ \\[1mm]
  $\frac{235}{814}n-6$ & $\leq$ & $\cms(n,10)$ & $\leq$ & $\frac{5}{11}n$ & \qquad&\qquad & $\frac{7}{22}n-6$ & $\leq$ & $\cms_1(n,10)$ & $\leq$ & $\frac{n-1}{2}$ \\[1mm]
  $\frac{143}{492}n-6$ & $\leq$ & $\cms(n,11)$ & $\leq$ & $\frac{11}{24}n$ & \qquad&\qquad & $\frac{47}{148}n-6$ & $\leq$ & $\cms_1(n,11)$ & $\leq$ & $\frac{n-1}{2}$ \\[1mm]
  $\frac{19}{65}n-6$ & $\leq$ & $\cms(n,12)$ & $\leq$ & $\frac{6}{13}n$ & \qquad&\qquad & $\frac{13}{41}n-6$ & $\leq$ & $\cms_1(n,12)$ & $\leq$ &$\frac{n-1}{2}$ \\[1mm]
  $\frac{403}{1372}n-6$ & $\leq$ & $\cms(n,13)$ & $\leq$ & $\frac{3}{8}n$ & \qquad&\qquad & $\frac{19}{60}n-6$ & $\leq$ & $\cms_1(n,13)$ & $\leq$ & $\frac{n-1}{2}$ \\[1mm]
   &  &  &  &  & \qquad&\qquad & $\frac{31}{98}n-6$ & $\leq$ & $\cms_1(n,14)$ & $\leq$ & $\frac{n-1}{2}$ \\[1mm]\hline
  \rule{0mm}{5mm}$\frac{31k}{98(k+1)}n-o(n)$ & $\leq$ & $\cms(n,k)$ & $\leq$ & $\frac{k}{2(k+1)}n$ & \qquad&\qquad & $\frac{31}{98}n-o(n)$ & $\leq$ & $\cms_1(n,k)$ & $\leq$ & $\frac{n-1}{2}$ \\[1mm]
  \multicolumn{5}{l}{for each $k \geq 14$} & \qquad&\qquad & \multicolumn{5}{l}{for each $k \geq 15$}
\end{tabular}
\caption{Consequences of Theorem~\ref{thm:k reg} for each $k$}\label{T:kconsequences}
\end{center}
\end{table}

We know of no nontrivial upper bounds on $\cms_1(n,k)$. It would be interesting to obtain some or, on the other hand, to prove that $\cms_1(n,k)$ approaches $\frac{n-1}{2}$ as $n$ becomes large. We certainly expect that our upper bounds on $\cms_1(n,k)$ and $\cms(n,k)$ are much closer to the true value than our lower bounds. In particular we pose the following question.

\begin{question}\label{conj:gen k-reg graph}
Let $k \geq 3$ be an integer. For integers $n > k$ such that $nk$ is even, is it the case that
$
\cms(n,k) = \tfrac{kn}{2(k+1)}-o(n)\,?
$
Is it the case that
$
\cms_1(n,k) = \tfrac{n-1}{2}-o(n)\,?
$
\end{question}

\bigskip
\noindent\textbf{Acknowledgments.}
This work was supported by Australian Research Council grants DP150100506 and FT160100048.

\end{document}